\newcommand{\ria}{\rightarrow}
\newcommand{\fao}[1]{\forall #1 \, }
\newcommand{\ape}{\hat{\ }}
\newcommand{\CCC}{\mathcal{C}}
 \newcommand{\vsps}{\vspace{3pt}}
\newcommand{\ttext}[1]{\ \text{#1}\ }
\newcommand{\SI}[1]{\Sigma^0_{#1}}
\newcommand{\PI}[1]{\Pi^0_{#1}}
\newcommand{\PPI}{\PI{1}}
\newcommand{\Q}{\mathcal Q}
\newcommand{\Kuc}{Ku{\v c}era}
\newcommand{\ML}{Martin-L{\"o}f}
\newcommand{\strcantor}{2^{<\omega}}
\newcommand{\seqcantor}{2^{ \omega}}
\newcommand{\cantor}{\seqcantor}
\newcommand{\NN}{{\mathbb{N}}}  % Symbols for sets of  numbers
\newcommand{\RR}{{\mathbb{R}}}
\newcommand{\QQ}{{\mathbb{Q}}}
\newcommand{\ZZ}{{\mathbb{Z}}}
\newcommand{\DII}{\Delta^0_2}
\newcommand{\ex}{\exists}
\newcommand{\fa}{\forall}
\newcommand{\LR}{\Leftrightarrow}
\newcommand{\LLR}{\ \Leftrightarrow \ }
\newcommand{\RA}{\Rightarrow}
\newcommand{\RRA}{\ \Rightarrow\ }
\newcommand{\LA}{\Leftarrow}
\newcommand{\UA}{\uparrow}
\newcommand{\n}{\noindent}
\newcommand{\wt}{\widetilde}
\newcommand{\sub}{\subseteq}
\newcommand{\ol}{\overline}
\newcommand{\ul}{\underline}
\renewcommand{\land}{\&}
\renewcommand{\lor}{\vee}
\newcommand{\ES}{\emptyset}
\renewcommand{\hat}{\widehat}
\newcommand{\lland}{\ \land \ }
\newcommand{\llor}{\ \lor \ }
\newcommand{\itone}{\item[(i)]}
\newcommand{\ittwo}{\item[(ii)]}
\newcommand{\itthree}{\item[(iii)]}
\newcommand{\itfour}{\item[(iv)]}
\newcommand{\la}{\langle}
\newcommand{\ra}{\rangle}
\newcommand{\bi}{\begin{itemize}}
\newcommand{\ei}{\end{itemize}}
\newcommand{\bc}{\begin{center}}
\newcommand{\ec}{\end{center}}
\newcommand{\sss}{\sigma}
\newcommand{\aaa}{\alpha}
\newcommand{\sssl}{|\sigma|}
\newcommand{\leb}{\mathbf{\lambda}}
\newcommand{\eps}{\epsilon}
\newcommand{\Om}{\Omega}
\newcommand{\twoset}{\{0,1\}}
\newcommand{\sN}[1]{_{#1\in \NN}}
\newcommand{\estring}{\emptyset}
\newcommand{\Opcl}[1]{[#1]^\prec}
\newcommand{\tp}[1]{2^{#1}}
\newcommand\+[1]{\mathcal{#1}}
\newcommand{\uhr}[1]{\!  \upharpoonright_{#1}}
\newtheorem{theorem}{Theorem}[section]
\newtheorem{thm}[theorem]{Theorem}
\newtheorem{definability lemma}[theorem]{Definability Lemma}
\newtheorem{fact}[theorem]{Fact}
\newtheorem{prop}[theorem]{Proposition}
\newtheorem{claim}[theorem]{Claim}
\newtheorem{lemma}[theorem]{Lemma}
\newtheorem{cor}[theorem]{Corollary}
\theoremstyle{definition}
\newtheorem{deff}[theorem]{Definition}
\newtheorem{remark}[theorem]{Remark}
\newcommand{\vsp}{\vspace{6pt}}
\DeclareMathSymbol{\wtilde}{\mathord}{largesymbols}{"65}
\newcommand{\Mart}{\mbox{\rm \textsf{mart}}}
\newcommand{\cdf}{\mbox{\rm \textsf{cdf}}}
\newcommand{\Com}[1]{ \ifthenelse{\boolean {comments}}{ \vsp \n  {\bf  Comment:}   {   #1}}{}   }
\newcommand{\rem}[1]{\relax}
    \newcommand{\bsc}{\usefont{T1}{cmr}{bx}{sc}}
    \newcommand{\pending}[1][]{{\noindent\highlight{\bsc\ifthenelse{\equal{#1}{}}{[to be written]}{[to be written: {\rm #1}]}}}\xspace}
    \newcommand{\prelims}[1][]{{\noindent\highlight{\bsc\ifthenelse{\equal{#1}{}}{[add to prelims]}{[add to prelims: {\rm #1}]}}}\xspace}
    \newcommand{\highlight}[2][red]{{\color{#1}#2}}
\newcommand{\R}{\mathbb{R}}
\renewcommand{\Q}{\mathbb{Q}}
\begin{document}

\title{Randomness and differentiability (long version)}

\date{\today---Started: Oct 15, 2009}

\author{Vasco Brattka}
\address{Vasco Brattka\\
Faculty of Computer Science, Universit\"at der Bundeswehr M\"unchen\\
85577 Neubiberg, Germany and\\
Department of Mathematics \& Applied Mathematics\\
University of Cape Town\\
Rondebosch 7701, South Africa}
\email{Vasco.Brattka@cca-net.de}

\author{Joseph S.~Miller}
\address{Joseph S.~Miller\\
Department of Mathematics\\
University of Wisconsin\\
Madison, WI 53706-1388, USA}
\email{jmiller@math.wisc.edu}

\author{Andr\'e Nies}
\address{Andr\'e Nies\\
Department of Computer Science, University of Auckland, Private bag 92019, Auckland, New Zealand}
\email{andre@cs.auckland.ac.nz}

\thanks{Brattka  was supported by the National Research Foundation of South Africa. Miller was supported by the National Science Foundation under grants DMS-0945187 and DMS-0946325, the latter being part of a Focused Research Group in Algorithmic Randomness. Nies was partially supported by the Marsden Fund of New Zealand, grant no.\ 08-UOA-187.}

  \keywords{computable analysis, algorithmic randomness,  differentiability, monotonic function, bounded variation}
\subjclass[2010]{Primary: 03D32; 03F60. Secondary 26A27, 26A48; 26A45}
%\subjclass{Primary 68Q30; Secondary 03D80, 28A78}

\begin{abstract}
	We characterize some  major algorithmic randomness notions via differentiability of effective functions. 
	
	\n (1) As the main result we show that
a real number   $z\in [0,1]$ is computably  random if and only if  
  each  nondecreasing computable function $[0,1]\ria \R$  is differentiable at $z$. 
  
  \n (2) We prove that a~real number   $z\in [0,1]$ is weakly 2-random if and only if  
  each  almost everywhere differentiable  computable function $[0,1]\ria \R$  is differentiable at $z$. 
  
  \n  (3) Recasting in classical language results dating  from 1975  of  the constructivist Demuth, we     show that a real~$z$ is \ML\ random  if and only if     every  computable function of bounded variation is differentiable at~$z$, and similarly for absolutely continuous functions. 

 We also  use our analytic methods to show that  computable randomness of a  real is base invariant, and to derive other preservation results for randomness notions.
\end{abstract}

 \maketitle
 
 \tableofcontents

\section{Introduction}
The main thesis of this paper is that algorithmic randomness of a real is equivalent to differentiability of effective functions at the real. In more detail, for  every major algorithmic randomness notion, one can  provide a class of effective functions on the unit interval   so that 

\vsp
\n ($*$)  \  a real $z\in [0,1]$ satisfies the randomness notion  $\LLR$ 

\hfill each function in the class is differentiable at $z$.

\vsp

\n For instance, $z$ is computably  random $\LLR$ each computable nondecreasing function is differentiable at $z$. Furthermore,    $z$ is Martin-L\"of random $\LLR$ each computable function of bounded variation is differentiable at $z$. The second  result  was proved by Demuth \cite{Demuth:75}, who used constructive language; we will reprove it here in the usual language, using   the first result relativized  to an oracle set.

%\subsection{A theorem of Lebesgue} \label{ss:Lebesgue}

Classically, to say that a property holds for a ``random'' real $z \in [0,1]$ simply means that the reals failing the property form a null set. For instance, a well-known theorem of Lebesgue~\cite{Lebesgue:1909}  states that 
  every  nondecreasing function $f\colon \, [0,1] \to \RR$ is differentiable at all reals~$z$ outside a null set (depending on $f$).
 That is, $f'(z)$ exists for a random real $z$ in the sense specified above.  Via Jordan's result that each function of bounded variation is the difference of two nondecreasing functions (see, for instance, \cite[Cor 5.2.3]{Bogachev.vol1:07}), Lebesgue's theorem can be extended to functions of bounded variation.

In most  of the  results of the type ($*$)   above,  the implication ``$\RRA$''  can be seen as an effective form of Lebesgue's theorem. Before we make this precise, we will  provide some background on algorithmic randomness,  and    computable functions on the unit interval.
 
\subsection{Some background} \

\label{ss:prelimsAlgRdCompFcns}
\smallskip

\n \emph{Algorithmic randomness.} The idea in algorithmic randomness is to think of  a real as random if it is in no \emph{effective} null set. To specify an algorithmic  randomness notion, one has to specify   a type of effective null set, which is usually done by introducing a test concept. Failing the test is the same as being in  the null set.

 A  hierarchy of algorithmic randomness  notions has been developed, each one corresponding to certain aspects of our intuition.   Traditionally, the  central notion has been  \emph{\ML\ randomness}. A $\SI 1$ set $\+G \sub [0,1]$ has the form $\bigcup_m A_m$, where  $A_m$ is an open interval with dyadic rational endpoints obtained effectively from $m$. Let $\leb$ denote the usual Lebesgue measure on the unit interval. A~\emph{Martin-L\"of  test} is a   sequence of uniformly $\SI 1$ sets $(\mathcal G_m) \sN m$  in the unit interval such that $\leb \mathcal G_m \le \tp{-m}$ for each $m$. 
 The algorithmic null set it describes is $\bigcap_m \+ G_m$. 

Schnorr  \cite{Schnorr:75}  maintained  that   Martin-L\"of randomness    is  already   too powerful  to be considered algorithmic, because it is  based on computably enumerable objects as tests.   He  proposed a   weaker notion: a real   is called \emph{computably  random} if no computable betting strategy can win on its binary expansion (see Subsection~\ref{ss:randomness_notions} for detail).  We will see that   this is the appropriate notion for studying almost-everywhere differentiability  of important classes of computable functions.

 %A  notion  implying \ML{} randomness is  weak $2$-randomness.
  A $\PI 2$  set (or effective $G_\delta$ set) is of the form $\bigcap_m \+ G_m$,  where $(\mathcal G_m) \sN m$  is a   sequence of uniformly $\SI 1$ sets.  We call a   real \emph{weakly $2$-random} if it  is in no null $\PI 2$ set. Compared to \ML{} randomness, the test notion is relaxed by replacing   the condition  $\fa m \, \leb \mathcal G_m \le \tp{-m}$  above    by the weaker condition $\lim_m \leb \+G_m =0$.  For     background on algorithmic randomness see~\cite[Chapter~3]{Nies:book} or 
 \cite{Nies:ICM}. 

\smallskip

\n \emph{Computable functions on the unit interval.}
  Several  definitions of computability for a  function $f\colon [0,1] \ria \R$  have been 
    proposed.  In close analogy to the Church-Turing thesis, many (if not all)  of them  turned out to be equivalent. The common  notion comes close to being a generally accepted formalization of computability for functions on the unit interval.  Functions that     are intuitively computable, such as $e^x$ and $\sqrt x$, are  computable in this formal sense.  
Computable functions in that sense are necessarily continuous; see the discussion in Weihrauch \cite{Weihrauch:00}.
The generally accepted   notion goes back to work of   Grzegorczyk and Lacombe from the 1950s, as discussed in   Pour-El and Richards prior to   \cite[Def.\  A, p.\ 25]{Pour-El.Richards:89}.  In  the same  book   \cite[Def.\  C, p.\ 26]{Pour-El.Richards:89}   they give   a simple    condition  equivalent to    computability of $f$, which they      call    ``effective Weierstrass'':

  \vsps

\n $f\colon [0,1] \ria \R$ is computable $\LR$ 
 there is an effective sequence  $(P_n) \sN n$  of polynomials with rational coefficients  such that $||f - P_n ||_\infty  \le \tp{-n}$ for each~$n$.

  \vsps
\n  This can be interpreted as saying that $f$ is a computable point in a suitable computable metric space. 
See Subsection~\ref{ss:compfunctions} for another characterization.

\subsection{Results of   type ($*$): the  implication ``$\RA$''} \

\n {(a)} 	 We will show in Theorem~\ref{thm:CRdiff} that

	 \vsp

	\n a real $z\in [0,1]$ is computably random $\RA$ 

	\hfill each nondecreasing computable function   is differentiable at $z$.

	\vsps This is  an effectivization of Lebesgue's theorem in terms of the concepts given above. Lebesgue's theorem is usually  proved via  Vitali coverings. This method is non-constructive; a new approach is needed for the effective version.
The proof is  by contraposition. The main problem is to proceed from the non-existence of $f'(z)$, which is based on the behaviour of slopes at arbitrarily small intervals $I$ containing $z$, to the success of a betting strategy, which only has access to basic dyadic intervals (namely, intervals of the form $[i \tp{-n}, (i+1)\tp{-n})$ for $ n \in \NN, i < \tp n$). The solution is to bet with    scaled and shifted  basic dyadic intervals, and show that the scaling and shifting parameters taken  from a finite set are sufficient to approximate $I$   from the outside and  also from the inside by such intervals. 

\n {(b)} The corresponding result of Demuth~\cite{Demuth:75}  involving  \ML{} randomness and computable functions of bounded variation will be  re-obtained as a corollary, using an effective form of Jordan's theorem.   We note that Demuth's    proof is somewhat obscure, which is partly due to the fact that it is uses   constructive language and notation. The attribution to Demuth  relies on  an interpretation, rather than a straightforward reading, of~\cite{Demuth:75}.

\n {(c)}  For  weak $2$-randomness, we take the largest class of computable functions that makes sense in this setting: the almost everywhere differentiable computable functions. The implication $\RA$ is obtained by observing that the points of nondifferentiablity for any computable function is a $\SI 3$ set (i.e., an effective   $G_{\delta \sigma}$ set). If the function is a.e.\ differentiable, this set is null, and hence  cannot contain a weakly $2$-random real.  

\subsection{Results of   type ($*$): the    implication ``$\LA$''} \

\n This is typically proved by contraposition.  One  simulates tests by non-differentiability of functions. Thus,  given a test in the sense of the algorithmic randomness notion, one builds a computable function $f$ on the unit interval such that, for each real $z$ failing the test,  $f'(z)$ fails to  exist.  We will provide direct, uniform constructions of this kind for weak $2$-randomness (c),  and then for \ML{} randomness (b). The computable functions we build are   sums of ``sawtooth functions''. For computable randomness (a), the simulation is less direct, though still uniform. The results in more detail are as follows.

\vsps

\n (a)  For  each  real $z$ that is not   computably random, there is a computable nondecreasing   function  $f$   such that  $\ol D f(z) = \infty$ (Theorem~\ref{thm:CRdiff}).

\n (b)    There is, in fact,  a single  computable function $f$  of  bounded variation such that $f'(z)$ fails to exist for   all non-\ML{} random reals~$z$  (Lemma~\ref{lem:MLtest_to_function}).

\n (c)  For each   $\PI 2$ null set there is an a.e.\ differentiable computable function $f$ that is non-differentiable at any $z$ in the null set (Theorem~\ref{thm:w2rChar}).

 As mentioned above,  (b)  was  already stated by  Demuth~\cite[Example 2]{Demuth:75}. For background on Demuth's work see the survey~\cite{Kucera.Nies:12}.

The  implication ``$\LA$''  is also rooted in  results from classical analysis. For instance, Zahorski~\cite{Zahorski:46} proved that each null $G_\delta$ subset of $\RR$  is the non-differentiability set of a  monotonic Lipschitz function. For a recent proof, see Fowler and Preiss~\cite{Fowler.Preiss:09}.

\subsection{Classes of effective functions, and randomness notions} \

\n
The results of   type ($*$) mean  that    all the major algorithmic randomness notions for a real can now  be  matched with at least one class of  effective functions  on the unit interval  in such a way that randomness of a real  is equivalent to differentiability at the real.
The  analytical properties of  functions we use  are the well-known ones from classical real analysis. 

The matching is onto,  but not 1-1: in a sense, randomness notions are coarser than classes of effective functions. Computable randomness is characterized  not only by differentiability of  nondecreasing computable functions, but also of computable Lipschitz  functions  \cite{Freer.Kjos.ea:nd}. Furthermore,   as an    effectiveness condition on functions,  one can   choose  anything between  computability in the sense discussed in Subsection~\ref{ss:prelimsAlgRdCompFcns} above, and  the weaker condition  that  $f(q)$  is a computable real (see Subsection~\ref{ss:CompReals}), uniformly in a rational $q \in [0,1]$. Several notions lying in  between have received attention. One of them is Markov computability, which will be discussed briefly in Section~\ref{s:extensions}. Note that for nondecreasing continuous functions, the effectivity notions coincide by Proposition~\ref{prop:monotonic computable}.

A further well-studied algorithmic randomness notion is  Schnorr randomness, which is even weaker than computable randomness (see, for instance, \cite[Section 3.5]{Nies:book}). A \emph{Schnorr test}  is  a Martin-L\"of test $(\mathcal G_m) \sN m$ such that $\leb \+G_m$ is a computable real uniformly in $m$. A real $z$ is Schnorr random if $z \not \in \bigcap_m \+G_m$ for each Schnorr test  $(\mathcal G_m) \sN m$.

%To match this notion with  a    class of   functions, one needs  a    notion of being effective that is stronger than the usual computability.
To characterize Schnorr randomness in terms of differentiability, we need a stronger notion of effectivity for functions. Call a function $f$ \emph{variation computable} if it is a computable point in the Banach space $AC_0[0,1]$ of absolutely continuous functions vanishing at $0$, where the norm of a function is its variation on $[0,1]$. The computable structure (in the sense of \cite[Ch.\ 2]{Pour-El.Richards:89}) is given, for instance,  by the polynomials with rational coefficients. Thus,  $f\in AC_0[0,1]$ is variation computable iff for each $n$,  one can determine   a polynomial $P_n$ with rational coefficients,  vanishing at $0$,   such that the variation of $f-P_n$ is at most $\tp{-n}$.  By the effective version of a classical theorem from analysis (see, for instance, \cite[Ch.\ 20]{Carothers:00}), $AC_0[0,1]$ is effectively isometric with the space $(\mathcal L_1[0,1], | |. | |_1)$, where  the computable structure is also determined by the  polynomials with rational coefficients. The isometry is given by differentiation, and its inverse by the indefinite integral.

Recent results of J.\ Rute \cite{Rute:12}, and independently Pathak, Rojas and Simpson, can be restated as follows:  $z$ is Schnorr random $\LR$  each  absolutely continuous function that is computable in the  variation norm  is differentiable at~$z$. Freer, Kjos-Hanssen, and Nies \cite{Freer.Kjos.ea:nd} showed the analogous result for  Lipschitz functions.

%\Com { (See  Nies'  2011 talk  ``\href{http://dl.dropbox.com/u/370127/talks/RandomnessComputableAnalysisNies.pdf}{Randomness and computable analysis: results and open questions}''   for more detail.)}

The matching between algorithmic randomness notions and classes of effective functions  is  summarized in Figure~\ref{fig:diagram}. 

\tikzstyle{header} = [font=\bfseries]
\tikzstyle{notion} = []
\tikzstyle{class} = [text centered]
\tikzstyle{comp} = [midway, right=0cm, text centered, text width=2.4cm, font=\small\itshape]

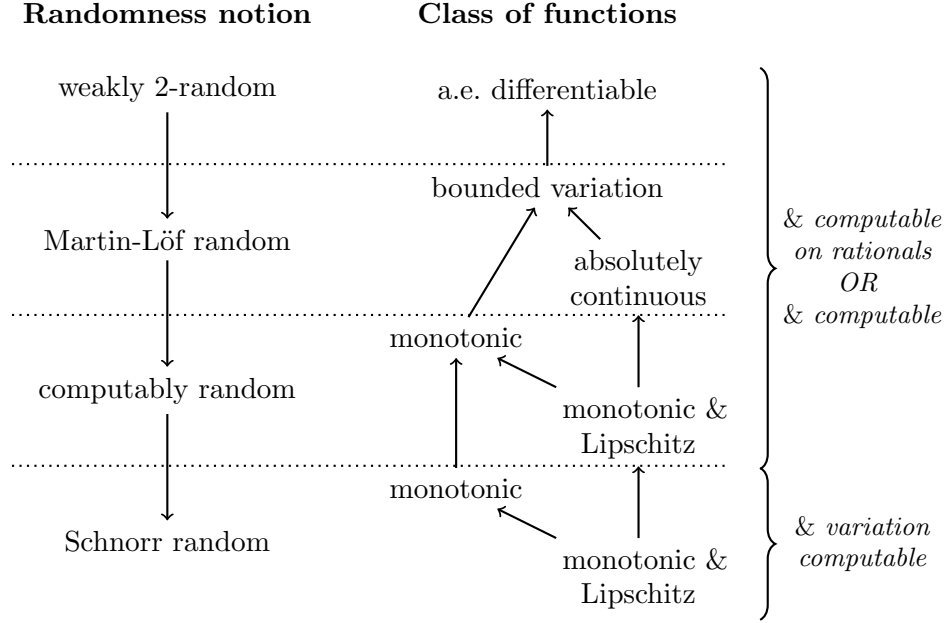
\begin{figure}[htbp] 
% \bc \scalebox{0.8}{\includegraphics{figures/Function_randomness_notions_3.pdf}} \ec
\begin{tikzpicture}[node distance=2cm, thick, segment amplitude=0.2cm]
	\node[header] (rn) {Randomness notion};
	\node[notion, below of=rn, yshift=1cm] (w2r) {weakly $2$-random};
	\node[notion, below of=w2r] (mlr) {\ML\ random};
	\node[notion, below of=mlr] (cr) {computably random};
	\node[notion, below of=cr] (sr) {Schnorr random};
	\path[->] (w2r) edge (mlr);
	\path[->] (mlr) edge (cr);
	\path[->] (cr) edge (sr);
	
	\node[header, right of=rn, xshift=3cm] (cf) {Class of functions};
	\node[class, below of=cf, yshift=1cm] (aed) {a.e.\ differentiable};
	\node[class, below of=aed, yshift=0.7cm] (bv) {bounded variation};
	\node[class, below of=bv, xshift=1.2cm, yshift=0.8cm, text width=2cm] (ac) {absolutely continuous};
	\node[class, below of=bv, xshift=-1.2cm] (m1) {monotonic};
	\node[class, below of=ac, text width=2cm] (mL1) {monotonic~\& Lipschitz};
	\node[class, below of=m1] (m2) {monotonic};
	\node[class, below of=mL1, text width=2cm] (mL2) {monotonic~\& Lipschitz};
	
	\path[->] (bv) edge (aed);
	\path[->] (ac) edge (bv);
	\path[->] (m1) edge (bv);
	\path[->] (mL1) edge (ac);
	\path[->] (mL1) edge (m1);
	\path[->] (m2) edge (m1);
	\path[shorten >=-1pt, ->] (mL2) edge (mL1);
	\path[->] (mL2) edge (m2);
	
	\draw[snake=brace, segment amplitude=0.2cm] ([xshift=2.8cm]aed.north) -- ([xshift=1.6cm]mL1.south) node[comp] {$\&$ computable on~rationals\break OR \break $\&$ computable};
	\draw[snake=brace] ([xshift=4cm]m2.north) -- ([xshift=1.6cm]mL2.south) node[comp] {$\&$ variation computable};
	
	\draw[dotted] ([yshift=-2cm]rn.west) -- ([yshift=-2cm, xshift=0.5cm]cf.east);
	\draw[dotted] ([yshift=-4cm]rn.west) -- ([yshift=-4cm, xshift=0.5cm]cf.east);
	\draw[dotted] ([yshift=-6cm]rn.west) -- ([yshift=-6cm, xshift=0.5cm]cf.east);
\end{tikzpicture}
\caption{Randomness notions matched with classes of effective functions defined on $[0,1]$ so that ($*$) holds}
\label{fig:diagram}\end{figure}

%\[ \xymatrix{
%\text{\bf Randomness notions}    &  & \text{\bf Classes of functions} &  \\
%\text{Weakly $2$-random}   &   &  \text{a.e.\ differentiable}   &
%}   \]

\subsection{Discussion}

The results above indicate a rich two-way  interaction   between algorithmic randomness and analysis.

\n \emph{Analysis to randomness:}  Characterizations via differentiability  can be used to improve our understanding of  an  algorithmic randomness notion. For instance, we  will show that  several randomness notions of reals are   preserved  under the maps  $z \to  z^\aaa$ where $\aaa\neq 0 $  is a computable  real.   Furthermore, we show that  computable randomness of a real is base invariant:  it does not depend on the fact that one uses the  binary expansion  of a real in its  definition  (Theorem~\ref{thm:CRbaseinv} below).

% Also An analog of computable randomness can be defined in  spaces other than the reals  via differentiability of computable Lipschitz functions.

\n \emph{Randomness to analysis:}
the results also improve our  understanding of the underlying classical theorems. They indicate that in the setting of Lebesgue's theorem mentioned close to  the beginning of the paper, 
the exception sets for differentiability of nondecreasing functions are simpler than the exception sets for  functions of  bounded variation. Furthermore,   one  can  attempt to  calibrate, in the sense of reverse mathematics,  the strength of theorems saying that a certain function is  a.e.\ well behaved.    The benchmark principles have the form ``for each oracle set  $X$,  there is a set $R$ that is  random in $X$'', for some fixed algorithmic randomness notion.  For \ML{} randomness, the  principle  above is called ``weak weak K\"onig's Lemma''.   Now consider the principle that 
every function of bounded variation is differentiable at some real. By the work in Subsection~\ref{ss:Proof_MLbV} below, this implies weak weak K\"onig's Lemma over a standard  base theory called RCA$_0$. Recent work of Nies and Yokoyama  (see \cite[Part 2]{LogicBlog:13}) uses genuine methods of  reverse mathematics  to show that  the converse implication holds as well. One can also study the strength of the Lebesgue differentiation theorem; see \cite[Section 3.4]{Montalban:11} for some background.

\subsection{Structure of the paper}
 Section~\ref{s:background} provides background from computable analysis.  Section~\ref{s:CRand_intro} introduces computable randomness and shows its base invariance. The central Section~\ref{s:CRandDiff}
     characterizes  computable randomness in terms of differentiability of computable functions.    The short Section~\ref{ss:Pathak} discusses some consequences of this  result. Section~\ref{s:Weak2_and_ML} characterizes weak $2$-randomness in terms of differentiability of computable functions, and provides the implication $\LA$  in ($*$) for \ML{} randomness.    The final Section~\ref{s:extensions}  extends  the results   to  functions that are merely  computable on the rationals, and to notions in between  such as  Markov computability which is introduced  briefly.  It also provides the implication $\RA$  in ($*$) for \ML{} randomness. The paper  ends with some open questions and future directions.

\section{Preliminaries on   computable analysis}
       \label{s:background}

 \subsection{Computable reals}  
 \label{ss:CompReals}

 A  sequence $(q_n)\sN n$ of rationals is called a \emph{Cauchy name} if   $|q_{n} - q_{k} | \le \tp{-n}$ for each $k \ge n$.
 If $\lim_n q_n =x$ we say that $(q_k)\sN k$ is a   \emph{Cauchy name} for $x$. Thus, $q_n$ approximates~$x$  up to an error  $|x - q_n | $ of at most $ \tp{-n}$.    
  Each $x \in [0,1]$ has a Cauchy name  $(q_n)\sN n$     such that  $q_0 = 0$,  $q_1=1/2$, and each  $q_n$ is of the form $i\tp{-n}$ for an integer $i$. Thus, if $n> 0$ then   $q_{n} - q_{n-1} = a \tp{-n}$ for some $a \in \Sigma  = \{-1, 0,1\}$.  In this way a real $x$ corresponds to an element of $\Sigma^\omega$. 
  (This is a name for $x$ in the  \emph{signed-digit representation} of  reals; see \cite{Weihrauch:00}.) 
 A real $x$ is called  \emph{computable} if it has  a computable Cauchy  name.  
 (Note that this definition is equivalent to the one  previously given,   that the  binary expansion be  computable as a sequence of bits; the binary expansion can, however, not be obtained uniformly from a   Cauchy  name,    since one would need to know whether the real is a dyadic rational. The definition given here is more convenient to work with.)
 
 A sequence $(x_n) \sN n$ of reals is \emph{computable} if $x_n$ is computable uniformly in~$n$. That is, there is a computable double sequence $(q_{n,k})_{n,k \in \NN}$ of rationals such that each   $x_n$ is a computable real as witnessed by its  Cauchy name  $(q_{n,k})\sN k$.

 %%%%%%%%%%%%%%%%%%%%%%%%
 
\subsection{More on computable functions defined on the    unit interval}
 
      \label{ss:compfunctions}
     
  Let $f \colon \, [0,1] \ria \R$.    The formal definition of computability closest to our intuition is perhaps the following:  there is a Turing functional that,   with a  Cauchy name $(q_n)\sN n$  of $x$   as an oracle, returns a Cauchy name  of $f(x)$. (Thus, equivalent Cauchy names for an  argument $x$ yield equivalent Cauchy names for the value $f(x)$.)   This  condition means that given  $k\in \NN$, using enough of the sequence $(q_n)\sN n$  we can compute  a rational $p$ such that $|x-p| \le \tp{-k}$.

 It is often easier to work with the equivalent Definition A in   Pour-El and Richards \cite[p.\ 26]{Pour-El.Richards:89}.     
 \begin{deff} \label{CompDef} A function  $f\colon \, [0,1] \to \mathbb R$ is called  \emph{computable} if

  \bi \item[(a)]  for each     computable    sequence of reals   $(x_k)\sN k$, the sequence   $f(x_k)$ is computable, and

 \item[(b)] $f$ is \emph{effectively  uniformly continuous}: there is a computable $h\colon \NN \ria \NN$ such that $|x-y| < \tp{-h(n)}$ implies $|f(x) - f(y) | < \tp{-n}$ for each $n$.  \ei
 \end{deff} 
 
 If $f$ is effectively  uniformly continuous,  we can replace (a) by 
the following apparently weaker  condition.
   
\bi \item[(a$'$)]  for {\it some}    computable    sequence of reals   $(v_i)\sN i$  that is  dense in $[0,1]$    the sequence $f(v_i)\sN i$ is computable.   \ei
Typically,  the sequence $(v_i)\sN i $ in (a$'$)  is  an effective  listing of the   rationals in $[0,1]$ without repetitions. 
To show   (a$'$) $\land$ (b) $\RA$ (a),   suppose  $(x_n )\sN n $ is a computable    sequence of reals  as witnessed by the computable double sequence $(q_{n,k})_{n,k \in \NN}$ of rationals (see Subsection~\ref{ss:CompReals}). Let $h$ be as in (b). We may assume that $h(p)\ge p$ for all $p$. To define the $(p-1)$-th approximation to $f(x_n)$,  find $i$ such that $|q_{n, h(p+1)} -v_i| < \tp{-h(p+1)}$, and  output the $p$-th approximation  $r$ to  
 $f(v_i)$. Observe  that 
 \[|f(x_n) - r| \le \tp{-p} + |f(x_n) - f(q_{n, h(p+1)}) | +  |f(q_{n, h(p+1)} ) - f(v_i)|  \le   \tp{-p+1}, \]
  as required. 

An \emph{index} for a computable function on the unit interval  $f$  is a pair consisting of a computable  index for the   double sequence  $(q_{n,k})_{n,k \in \NN}$ of rationals determining the values of $f$ at the  rationals, together with a computable  index for $h$.

%    This is clear because the for measure on $\mu_M$ on  $[0,1]$ associated with $M_f$  we have $\mu_M([0,x)) = f(x)$. 

\subsection{Computability for nondecreasing functions} 

\label{ss:comp_nondecreasing_names}

We will frequently work with nondecreasing functions. Mere continuity and (a$'$)  are   sufficient for such a function to be computable.    This easy fact will be very useful later~on.

   \begin{prop} \label{prop:monotonic computable}  Let $g$ be a nondecreasing function. Suppose there is  a computable dense sequence  $(v_i) \sN i$ of reals in $   [0,1]  $ such that the sequence of reals $ g(v_i)\sN i$ is  computable. Suppose that $g$ is also continuous.  Then     $g$ is computable. \end{prop}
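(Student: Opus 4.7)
Since condition~(a$'$) of Definition~\ref{CompDef} holds by hypothesis and the subsequent argument establishes that (a$'$)$\,\land\,$(b)$\Rightarrow$(a), the task reduces to verifying condition~(b), effective uniform continuity: given $n$, compute $k$ such that $|x-y|<2^{-k}$ implies $|g(x)-g(y)|<2^{-n}$.

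To produce $k$, I would effectively dovetail---over candidate values of $k$ and finite index-tuples $(i_0,\dots,i_M)$---a search for a \emph{chain certificate}: an increasing sequence $v_{i_0}<v_{i_1}<\cdots<v_{i_M}$ from $(v_i)$ satisfying the $\Sigma^0_1$ conditions (i)~$v_{i_0}<2^{-k}$ and $v_{i_M}>1-2^{-k}$; (ii)~$v_{i_{j+1}}-v_{i_j}<2^{-k}$ for every $j<M$; (iii)~$g(v_{i_{j+1}})-g(v_{i_j})<2^{-n-2}$ for every $j<M$. The semidecidability of (i)--(iii) uses only Cauchy names of the $v_i$ and of the $g(v_i)$. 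Existence of such a chain for all sufficiently large $k$ follows from classical uniform continuity of $g$ on the compact interval $[0,1]$ together with density of $(v_i)$: fix any $\delta$ from the classical modulus at oscillation $2^{-n-2}$ and pick $k$ with $2^{-k}<\delta$, then invoke density to find the required $v_{i_j}$'s.

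From the chain, the monotonicity of $g$ gives the interior bound: for $x,y\in[v_{i_0},v_{i_M}]$ with $|x-y|<2^{-k-1}$, the two points lie within at most two adjacent chain subintervals, so $|g(x)-g(y)|\le 2\cdot 2^{-n-2}<2^{-n}$. The main obstacle is the two end-caps $[0,v_{i_0}]$ and $[v_{i_M},1]$, whose $g$-oscillations equal $g(v_{i_0})-g(0)$ and $g(1)-g(v_{i_M})$ respectively; these are not directly semidecidable from the data at hand. To overcome this I would strengthen the certificate by adding flanking elements below $v_{i_0}$ and above $v_{i_M}$ whose consecutive $g$-differences are also forced to be small, and exploit the continuity of $g$ at $0$ and $1$ together with monotonicity to certify that the end-cap oscillations lie below $2^{-n-1}$. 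Combining with the interior bound, $2^{-k-1}$ then serves as a uniform modulus for $2^{-n}$, establishing (b) and hence computability of $g$.
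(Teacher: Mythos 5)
Your overall strategy---reduce to verifying effective uniform continuity (b), since (a$'$) holds by hypothesis, and then certify a modulus by an unbounded search for a finite configuration of points from the sequence whose $g$-values witness small oscillation, with monotonicity making the certificate $\Sigma^0_1$---is essentially the paper's own proof, which searches for a finite cover of $[0,1]$ by balls centred at sequence points with certified oscillation $<\epsilon$ and takes as modulus the minimum distance between balls with disjoint closures. There are, however, two genuine problems in your execution. First, the claimed modulus $2^{-k-1}$ does not follow from your certificate: condition (ii) only bounds the gaps $v_{i_{j+1}}-v_{i_j}$ from \emph{above}, so a pair $x<y$ with $y-x<2^{-k-1}$ may straddle arbitrarily many chain subintervals and accumulate arbitrarily many increments, each merely known to be $<2^{-n-2}$. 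The repair is exactly the paper's move: take the modulus to be a positive rational lower bound for $\min_j(v_{i_{j+1}}-v_{i_j})$, which is computable from the finite certificate; two points closer than that do lie in at most two adjacent subintervals.

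Second, the end-cap step you defer is not a technicality that your sketched fix can absorb. No finite list of inequalities among the values $g(v_j)$ with $v_j>0$ can certify $g(v_{i_0})-g(0)<2^{-n-1}$, because from the data $(v_i,g(v_i))$ alone the value $g(0)=\inf_i g(v_i)$ is in general only approximable from above. Indeed, if $0$ may be absent from the sequence the statement itself fails: let $\alpha=\inf_s\alpha_s$ be a non-computable right-c.e.\ real with $(\alpha_s)$ a computable strictly decreasing sequence of rationals, set $g(0)=\alpha$ and $g(2^{-s})=\alpha_s$, and interpolate linearly; then $g$ is continuous, nondecreasing, and uniformly computable on the rationals in $(0,1]$, yet $g(0)$ is not computable, so $g$ is not computable. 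Your end-cap argument therefore needs the additional assumption that $0$ and $1$ occur among the $v_i$ (or that $g(0)$ and $g(1)$ are computable)---an assumption satisfied in every application of the proposition in the paper, and one the paper's own terse proof also uses tacitly when it covers all of $[0,1]$. Granting it, the end-caps are handled exactly like the interior (the flanking element is $0$ or $1$ itself), and with the corrected modulus your proof goes through.
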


\begin{proof} We analyze the usual  proof    that~$g$ is uniformly continuous in order to verify that $g$ is effectively uniformly continuous, as defined in   (b) of Definition~\ref{CompDef}. To define a function $h$ as in (b), given $n$ let $\eps = \tp{-n-2}$. Since $g$ is nondecreasing and continuous, we can compute  a      collection $\delta_i, v_i$  ($i \in F$) where $F\sub \NN$ is  finite  and the $\delta_i,v_i$ are    rationals in $[0,1]$,    such that  $[0,1] \sub \bigcup_{i \in F} B_{\delta_i}(v_i)$, and  $d(x,v_i) < \delta_i \ria  d(g(x), g(v_i)) < \eps$.  Let $\delta $ be the minimum distance of any pair of   balls $B_{\delta_i}(v_i)$ with a disjoint closure.  If  $d(x,y) < \delta$ then choose $i,k \in F$ such that $x \in B_{\delta_i}(v_i)$ and $y \in  B_{\delta_k}(v_k)$.  These two balls are not disjoint,  so we have $d(g(x), g(y)) <  4\eps = \tp{-n}$. Since we obtained   $\delta $ effectively from $n$, we can determine $h(n) $ such that $\tp{-h(n)} < \delta$.  \end{proof}

\subsection{Arithmetical complexity of sets of reals} 
\label{ss:arithmetical_complexity}
By an \emph{open interval} in $[0,1]$ 	we mean an interval of the form  $(a,b)$, $[0,b)$, $(a, 1]$ or $[0,1]$, where $0\le a\le b \le 1$. 
A~\emph{$\SI 1$ set} in $[0,1]$ is a set of the form $\bigcup_k A_k$ where  $(A_k)\sN k$ is a computable sequence of open intervals with dyadic rational endpoints.
	 A~\emph{$\PI 2$ set} has the form $\bigcap_m \+G$ where the $\+G_m$ are $\SI 1$ sets uniformly in $m$.

The following   well-known fact will be needed later.
\begin{lemma} \label{lem:comp_function_SI1O} Let $f\colon \, [0,1]\to \RR$ be computable. Then the sets  $\{x\colon \, f(x) < p\}$ and $\{x\colon \, f(x) >  p\}$ are $\SI 1$ sets,  uniformly in a rational $p$. \end{lemma}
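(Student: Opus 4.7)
The plan is to exploit the effective uniform continuity of $f$ (condition (b) of Definition~\ref{CompDef}) together with the fact, from Subsection~\ref{ss:compfunctions}, that $f(q)$ is a computable real uniformly in a rational $q \in [0,1]$. Since $\{x : f(x) > p\} = \{x : (-f)(x) < -p\}$ and an index for $-f$ is obtained effectively from one for $f$, it suffices to handle the set $\+U_p := \{x \in [0,1] : f(x) < p\}$.

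Let $h$ be a computable modulus of uniform continuity for $f$ as in Definition~\ref{CompDef}(b), and let $(v_i)\sN i$ enumerate $\QQ \cap [0,1]$ effectively. The central identity is
\[
\+U_p \ =\ \bigcup \bigl\{\, B_{\tp{-h(n)}}(v_i) \cap [0,1]\ :\ n \in \NN,\ i \in \NN,\ f(v_i) < p - \tp{-n}\,\bigr\}.
\]
For the inclusion $\supseteq$, if $|x-v_i| < \tp{-h(n)}$ then $|f(x)-f(v_i)| < \tp{-n}$, and combined with $f(v_i) < p - \tp{-n}$ this yields $f(x) < p$. For $\subseteq$, given $x$ with $f(x) < p$, choose $n$ large enough that $f(x) + 2 \cdot \tp{-n} < p$ and then a rational $v_i$ with $|x-v_i| < \tp{-h(n)}$; then $f(v_i) < f(x) + \tp{-n} < p - \tp{-n}$, so $x$ lies in the corresponding ball.

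To see that $\+U_p$ is $\SI 1$ uniformly in the rational $p$, note that from an index for $f$ we can compute a Cauchy name for $f(v_i)$ uniformly in $i$, so the predicate $f(v_i) < p - \tp{-n}$ on $(i,n,p)$ is c.e.: we enumerate $(i,n)$ as soon as a rational upper bound from the Cauchy name for $f(v_i)$ drops below $p - \tp{-n}$. Whenever a triple $(i,n,p)$ is enumerated, we list all open intervals $(a,b)$ with dyadic rational endpoints satisfying $v_i - \tp{-h(n)} \le a < b \le v_i + \tp{-h(n)}$ and $0 \le a$, $b \le 1$; this effectively exhausts $B_{\tp{-h(n)}}(v_i) \cap [0,1]$ by dyadic subintervals. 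The union of all such intervals is exactly $\+U_p$, presented as a $\SI 1$ set uniformly in $p$.

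There is no real obstacle here, only bookkeeping; the only point to watch is that the enumeration must be uniform in $p$, which is ensured because both the c.e.\ predicate $f(v_i) < p - \tp{-n}$ and the enumeration of dyadic subintervals of $B_{\tp{-h(n)}}(v_i)$ depend computably on $p$ through straightforward comparisons of rationals.
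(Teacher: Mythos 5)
Your proof is correct and follows essentially the same approach as the paper: both enumerate rationals $v$ with $f(v) < p - \tp{-n}$ (a c.e.\ condition by uniform computability of $f$ on rationals) and use the modulus $h$ to produce balls of radius $\tp{-h(n)}$ around them, with the same two-inclusion argument. The only cosmetic difference is that the paper enumerates \emph{dyadic} rationals $t$ directly (so the intervals $[0,1]\cap(t-\delta_n,t+\delta_n)$ already have dyadic endpoints), whereas you use arbitrary rationals $v_i$ and then refine each ball into dyadic subintervals; just note that when $v_i-\tp{-h(n)}<0$ or $v_i+\tp{-h(n)}>1$ you should include intervals of the form $[0,b)$ or $(a,1]$ so as not to omit the endpoints $0$ and $1$.
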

\begin{proof} We verify the fact for sets of the form $\{x\colon \, f(x) < p\}$, the other case being symmetric. By (a$'$) in Subsection~\ref{ss:compfunctions}, $f$ is uniformly computable on the rationals in $[0,1]$. Suppose $(q_k) \sN k$ is a computable Cauchy name for a real $y$. Then $y<  s \LR \ex k \, [q_k < s - \tp{-k}]$.  So we can uniformly in a rational $s$ enumerate the  set of rationals $t$ such that $y= f(t)< s$. 
	
Now let $h\colon \NN \to \NN $ be a function showing the  effective uniform continuity  of $f$ in the sense of  (b) of Subsection~\ref{ss:compfunctions}. 	 To verify that  $\+S = \{x\colon \, f(x) < p\}$  is $\SI 1$, we have to show that  $\+S= \bigcup_k A_k$ where  $(A_k)\sN k$ is a computable sequence of open intervals with dyadic rational endpoints.

To define this sequence,  for each $n$ in parallel, do the following. Let $s_n= p- \tp{-n}$ and  $\delta_n = \tp{-h(n)}$. When a dyadic  rational  $t $  such that $f(t) <  s_n$ is enumerated, add to the sequence    $(A_k)\sN k$   the open interval $[0,1] \cap (t-\delta_n, t+ \delta_n)$. 

Clearly  $\bigcup_k A_k  \sub \+ S$. For the converse inclusion, given $x\in \+S$, choose $n$ such that $f(x)+ 2\cdot \tp{-n} < p$, and choose a rational $t$ such that $x$ is in the open interval $[0,1] \cap  (t-\delta_n, t+ \delta_n)$. Then $f(t) < s_n$, so this interval is added to the sequence. 
\end{proof}

Actually there is   uniformity at a higher level:  effectively in an index for~$f$, one can obtain an index for the function mapping $p$ to an index for the  $\SI 1$ set $\{x\colon \, f(x) < p\}$. 
 
 %%%%%%%%%%%%%%%%%%%%%
\subsection{Some notation and facts on differentiability}   \label{ss:diff_prelims}
  
   Unless otherwise mentioned,   functions will have   a domain contained in the unit interval. 

For  a   function $f$,  the \emph{slope} at a pair $a,b$  of distinct reals  in its domain  is 
   
   \[ S_f(a,b) = \frac{f(a)-f(b)}{a-b}.\]
   Clearly $S_f(a,b) = S_f(b,a)$. 
   If $A$ is a nontrivial interval with endpoints $a,b$, we also write $S_f(A)$ for $ S_f(a,b)$.
   
   Recall that if $z$ is in the domain of $f$  and the domain is dense around $z$, then  \begin{eqnarray*} \ol D f(z)  & = & \limsup_{h\ria 0} S_f(z, z+h) \\
   \ul D f(z) & = & \liminf_{h\ria 0} S_f(z, z+h)\end{eqnarray*} 
   
   Note that we allow the values $\pm \infty$. 
By the definition, a   function  $f$ is    differentiable at $z$ if $\ul D f(z) = \ol D f(z)$ and this value is finite.  

For  $a < x < b$   we have  
\begin{equation} \label{eqn:weighted_average_slopes} S_f(a,b) = \frac{x-a}{b-a}S_f(a,x)+ \frac{b-x}{b-a}S_f(x,b). \end{equation}
	This implies the following:

\begin{fact} \label{fact:slopes} Let $a < x < b$. Then \[\min \{ S_f(a,x), S_f(x,b)\} \le S_f(a,b) \le \max \{ S_f(a,x), S_f(x,b)\}.\]  \end{fact}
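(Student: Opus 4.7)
The plan is to derive the fact directly from the preceding identity (\ref{eqn:weighted_average_slopes}), which expresses $S_f(a,b)$ as a convex combination of the two slopes $S_f(a,x)$ and $S_f(x,b)$. Once that is observed, the inequality is the standard statement that any convex combination of two reals lies between their minimum and maximum.

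First I would verify (as a one-line check, which the paper has already stated) that the identity
\[ S_f(a,b) \;=\; \tfrac{x-a}{b-a}\,S_f(a,x) \;+\; \tfrac{b-x}{b-a}\,S_f(x,b) \]
does indeed express $S_f(a,b)$ as a convex combination: since $a<x<b$, both coefficients $\lambda := \tfrac{x-a}{b-a}$ and $1-\lambda = \tfrac{b-x}{b-a}$ are strictly positive, and they sum to $1$. Writing $u = S_f(a,x)$ and $v = S_f(x,b)$, we have $S_f(a,b) = \lambda u + (1-\lambda) v$.

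Next I would apply the elementary convexity bound: for any reals $u,v$ and $\lambda \in [0,1]$,
\[ \min\{u,v\} \;=\; \lambda \min\{u,v\} + (1-\lambda)\min\{u,v\} \;\le\; \lambda u + (1-\lambda) v \;\le\; \lambda \max\{u,v\} + (1-\lambda)\max\{u,v\} \;=\; \max\{u,v\}. \]
Substituting $u = S_f(a,x)$ and $v = S_f(x,b)$ gives exactly the claimed two-sided inequality.

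There is no real obstacle here; the only content is invoking (\ref{eqn:weighted_average_slopes}) and observing the convex combination. One could alternatively give a case split (if $S_f(a,x) \le S_f(x,b)$ show both inequalities directly, otherwise swap roles), but the convex-combination argument is cleaner and matches the phrasing of the identity already presented in the text.
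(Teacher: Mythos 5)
Your proof is correct and is exactly the argument the paper intends: the Fact is stated as an immediate consequence of the convex-combination identity (\ref{eqn:weighted_average_slopes}), and your observation that the coefficients are nonnegative and sum to $1$ is all that is needed. No gap.
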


Consider  a set $V \sub \RR$ that  is  dense in  $ [0,1]$. If $V$ is contained in the domain of a function  $f$,  we let 
%\label{eqn:DV} 
\begin{eqnarray*}    D^Vf (x) & = &\lim_{h \ria 0+} \sup   \{S_f(a,b)  \colon \\ && \ \ \  a, b \in V\cap [0,1]   \lland  \, a\le x \le b \lland\, 0 <  b-a\le h \}\\ 
    D_V f (x) & = & \lim_{h \ria 0+} \inf  \{S_f(a,b)  \colon  \\ && \ \ \  a, b \in V\cap [0,1]   \lland  \, a\le x \le b \lland\, 0 <  b-a\le h\}. \end{eqnarray*}  
%If $f(x) $  is defined then for $a \le x \le b$, $a,b \in V$,  we have   

If $f(z)$ is    defined, then  Fact~\ref{fact:slopes}  implies that 

 \begin{equation} \label{eqn:DfDV}  \ul Df(z) \le D_V f(z) \le D^V f(z) \le \ol Df(z).   \end{equation}

The \emph{middle third} of an   interval $(a,a+d)$, where $0<d$, is the closed interval 
  $[a + d/3 ,   a+ d \cdot 2/3]$.   The following  lemma will be used in the proof of the main Theorem~\ref{thm:CRdiff}.     It implies that if a function $f$ is not differentiable at $z$, then this fact is witnessed on intervals that contain $z$ in their middle third. 
\begin{lemma} \label{lem:mid3}
Suppose  that $f\colon \, [0,1] \to \mathbb R$ is continuous at $z$. For any $h> 0$ let
 \bc $\+ I_h = \{(a,b)\colon 0<b-a <  h \, \lland \text{\rm  $z$ is in the middle third of }(a,b)\}$. \ec  
 Suppose that  \bc $v:=\lim_{h\to 0} \sup \{S_f(a,b)\colon (a,b)\in \+ I_h\} =\lim_{h\to 0} \inf \{S_f(a,b)\colon (a,b)\in \+ I_h\}$. \ec Then $f'(z)=v$.\end{lemma}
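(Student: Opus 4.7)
The plan is to establish $f'(z) = v$ by showing that $\lim_{k \to 0^+} S_f(z, z+k) = v$; the analogous one-sided limit as $k \to 0^-$ will follow by a symmetric argument. Fix $\epsilon > 0$ and use the hypothesis to choose $h_0 > 0$ such that $|S_f(a,b) - v| < \epsilon$ for every $(a,b) \in \+I_{h_0}$. The main obstacle is that the interval $(z, z+k)$ has $z$ as an endpoint, so it never lies in any $\+I_h$ and the hypothesis gives no direct bound on $S_f(z, z+k)$. To circumvent this, I sandwich each one-sided slope using two families of nested \emph{two-sided} intervals that do belong to $\+I_{h_0}$.

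For $k \in (0, \tfrac{2}{3} h_0)$ and $n \ge 0$, let
\[
I_n = \bigl(z - k\cdot 2^{-n-1},\, z + k\cdot 2^{-n}\bigr) \quad\text{and}\quad J_n = \bigl(z - k\cdot 2^{-n-1},\, z + k\cdot 2^{-n-1}\bigr).
\]
A direct check shows that $z$ lies at the boundary between the first and middle thirds of $I_n$, and at the centre of $J_n$; both intervals have length at most $3k/2 < h_0$, so $|S_f(I_n) - v| < \epsilon$ and $|S_f(J_n) - v| < \epsilon$ for every $n$. Write $r_n = S_f(z, z + k \cdot 2^{-n})$ and $l_n = S_f(z - k \cdot 2^{-n}, z)$. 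Applying the weighted-average identity~\eqref{eqn:weighted_average_slopes} to $I_n$ and to $J_n$, each split at $z$, yields
\[
S_f(I_n) = \tfrac{1}{3}\, l_{n+1} + \tfrac{2}{3}\, r_n, \qquad S_f(J_n) = \tfrac{1}{2}\, l_{n+1} + \tfrac{1}{2}\, r_{n+1}.
\]
Eliminating $l_{n+1}$ produces the recurrence $r_n = \tfrac{3}{2} S_f(I_n) - S_f(J_n) + \tfrac{1}{2}\, r_{n+1}$.

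Iterating this recurrence $N$ times, and noting that $2^{-N} r_N = (f(z + k\cdot 2^{-N}) - f(z))/k$ tends to $0$ as $N \to \infty$ by continuity of $f$ at $z$, one obtains
\[
r_0 = \sum_{n=0}^{\infty} 2^{-n} \bigl(\tfrac{3}{2} S_f(I_n) - S_f(J_n)\bigr).
\]
Because $\sum_{n\ge 0} 2^{-n}(\tfrac{3}{2} - 1) = 1$, the right-hand side would equal $v$ exactly if each slope were $v$; in general,
\[
|r_0 - v| \le \sum_{n \ge 0} 2^{-n}\bigl(\tfrac{3}{2}|S_f(I_n) - v| + |S_f(J_n) - v|\bigr) \le 5\epsilon.
\]
Since $\epsilon$ was arbitrary, $S_f(z, z+k) \to v$ as $k \to 0^+$. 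The mirror construction, using $(z - k\cdot 2^{-n}, z + k \cdot 2^{-n-1})$ in place of $I_n$ together with the same $J_n$, gives $S_f(z-k, z) \to v$. Combining the two one-sided limits yields $f'(z) = v$.
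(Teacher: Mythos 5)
Your proof is correct, and it takes a genuinely different route from the paper's. The paper first uses continuity at~$z$ to reduce $f'(z)$ to the limit of two-sided slopes $S_f(c,d)$ over intervals $(c,d)\ni z$; it then bounds $S_f(c,d)$ by a \emph{finite outward} telescoping through a "see-saw" chain of intervals $(a_n,b_n)$, $(a_{n+1},b_n)$ that start near~$z$, grow geometrically, all lie in $\+I_h$, and terminate once they overshoot~$d$. You instead attack the one-sided slopes $S_f(z,z\pm k)$ directly, telescoping \emph{inward} through the geometrically shrinking families $I_n,J_n\in\+I_{h_0}$, which gives a convergent infinite series whose tail $2^{-N}r_N$ vanishes by continuity. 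The two uses of continuity are thus placed differently: the paper spends it up front to get two-sided intervals, you spend it at the end to kill the tail. Your closed-form representation $r_0=\sum_{n\ge 0}2^{-n}\bigl(\tfrac32 S_f(I_n)-S_f(J_n)\bigr)$, together with the observation that the coefficients sum to~$1$, is a clean way to read off the error bound ($5\epsilon$, slightly sharper than the $9\epsilon$-type bound implicit in the paper's $5s-4t$, $5t-4s$). Both arguments rely on the same weighted-average identity for slopes and on continuity of $f$ at~$z$; the main structural difference is outward finite telescoping bounding a two-sided slope versus inward infinite telescoping bounding each one-sided slope.
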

\begin{proof}
The continuity of $f$ at $z$ implies that $f'(z)$ equals  the limit of $S_f(a,b)$ over all open intervals $(a,b)$ that contain $z$, as $b-a\to 0$. 
Take $h$ and $t<s$ such that $t<S_f(a,b)<s$ for all $(a,b)\in \+ I_h$. Consider an interval $(c,d)$ containing $z$ such that $d-c<h/3$. We will prove that \bc $5t-4s<S_f(c,d)<5s-4t$.  \ec   Note that we can take $t$ and $s$ to approach $v$ as $h\to 0$, in which case both $5t-4s$ and $5s-4t$ also approach $v$. This implies that $f'(z)=v$.

Assume, without loss of generality, that $z$ is closer to $c$ than to $d$.    The idea is to define a sequence of  intervals  $(a_n,b_n)$,  $(a_{n+1},b_n)$  in $\+ I_h$   of increasing length.  We start with $a_0=c$ and $b_0 =  a_0 + 3(z-c)$.   The real   $z$ is the least in the  middle third of $(a_n,b_n)$, and the greatest in  the  middle third of $(a_{n+1},b_n)$.     The intervals ``see-saw''  around $z$ until  we reach  $N$ such that $b_N<d\leq b_{N+1}$.   We first over-estimate $f(d)-f(c) $   by $f(d)-f(a_{N+1})$, then subtract an   over-correction $f(b_N)-f(a_{N+1})$,  then add a second correction $f(b_N)-f(a_N)$, and so on, until we add  $f(b_0)-f(a_0)$ and get the right value. The   terms we add are  bounded from above by the length of the corresponding  interval times $s$.  The    terms we subtract are  bounded from below by the length of the interval times $t$.  This will show that $S_f(c,d)<5s-4t$.
  
For the details,   let $\delta = z-c$.   We   let $a_n = z-2^{2n}\delta$ and $b_n=z+2^{2n+1}\delta$ for all $n\in\omega$.   We may assume that $b_0<d$, because otherwise $z$ would be in the middle third of $(c,d)$.    Note that $z$ is in the middle third of $(a_{N+1},d)$ because $b_{N+1} \ge d$. This interval is the longest we will consider. Note that  \bc  $d-a_{N+1}=(d-z)+(z-a_{N+1})<(d-c)+2^{2N+2}\delta<3(d-c)\leq h$, \ec 
because  $2^{2N+1}\delta = b_N-z < d-c$.  Therefore, all of the intervals that occur in the  first four  lines of the following  estimates  are in $\+ I_h$. We have
\begin{eqnarray}  
f(d)-f(c) & =& (f(d)-f(a_{N+1})) - (f(b_N)-f(a_{N+1})) + (f(b_N)-f(a_N))  \nonumber \\  
		&&  - \cdots  + \cdots  - (f(b_0)-f(a_1)) + (f(b_0)-f(a_0))      \nonumber  \\
 		&\le & s(d-a_{N+1}) - t(b_N-a_{N+1}) + s(b_N-a_N)  \nonumber \\ 
		&&  - \cdots  + \cdots   - t(b_0-a_1) + s(b_0-a_0) \label{line1}  \\
		& =&  s(d-c) + s(a_0-a_{N+1}) - t(2^{2N+3}-2)\delta + s(2^{2N+2}-1)\delta \nonumber  \\
		& =&  s(d-c) + (s-t)(2^{2N+3}-2)\delta   \nonumber  \\ 
		&<&  s(d-c) + 4(s-t)2^{2N+1}\delta  \nonumber  \\
		& <&  s(d-c) + 4(s-t)(d-c).   \nonumber
\end{eqnarray}
This proves that $S_f(c,d)<5s-4t$. The lower bound   $5t-4s<S_f(c,d)$ is obtained in an  analogous way.  \end{proof}

\subsection{Binary expansions} % (fold)
	 \label{sub:binary_expansions}

	By a \emph{binary expansion} of a real $x \in [0,1)$ we will always mean the one with infinitely many 0s. Co-infinite sets of natural numbers are often  identified with reals in $[0,1)$ via the binary expansion. In this way, the product measure on Cantor space $\cantor$ is turned into the \emph{uniform} (Lebesgue) measure on $[0,1]$.
	 % subsection binary_expansions (end)

% 

\section{Computable randomness} 

\label{s:CRand_intro}

%\subsection{Randomness notions for reals}
%Recall that, to define an algorithmic randomness  notion, one introduces a notion of effective null~set. A real is random in that sense if it is in no such effective null set.  See \cite[Ch.\ 3]{Nies:book} for more background on \ML{} randomness and the other notions in the hierarchy.  

\subsection{Background on  computable randomness}
\label{ss:MG}   \label{ss:randomness_notions}

%
%
%
%Recall that Schnorr  \cite{Schnorr:75}  maintained  that   Martin-L\"of randomness    is      too powerful  to be considered algorithmic.   
%  As a      more restricted       notion of a test, he
  
  Schnorr  \cite{Schnorr:75}  proposed \emph{computable betting strategies} as tests for randomness. They are certain    computable functions~$M$ from $\strcantor $ to the non-negative reals.    
  Let $Z $ be an infinite sequence of bits, and let $Z \uhr n$ denote the first $n$ bits. When the player has seen $\sss= Z \uhr n$, she can make a bet~$q$, where $0 \le q \le M(\sss)$, on  what  the next bit $Z(n) $ is.     If she is right, she gets $q$. Otherwise she loses $q$. 
  The formal concept corresponding to  a betting strategy is   the following.

  \begin{deff} \label{df:MG}  A \emph{martingale} is  a function $2^{< \omega} \ria \R^+_0$ such that the fairness condition 
   \begin{equation} \label{eqn:fairness}  M(\sss 0) + M(\sss 1) = 2 M(\sss) \end{equation} 
   holds    for each string $\sss$.  
$M$ \emph{succeeds}  on a sequence of bits   $Z$ if  $M(Z\uhr n)$ is unbounded.   \end{deff}

Recall from Subsection~\ref{ss:CompReals} that a  real number $x$ is called computable if  there is a computable Cauchy name  $(q_n)\sN n$  of rationals such  that $|x-q_n | \le \tp{-n}$ for each $n$. A martingale $M \colon \, 2^{< \omega} \ria \R^+_0$ is called \emph{computable} if $M(\sss)$ is a computable real  uniformly in  a string  $\sss$.

%The following notion goes back to  Schnorr \cite{Schnorr:75}. 
\begin{deff}  \label{df:CR} An infinite  sequence of bits  $Z$ is called \emph{computably random} if  no computable martingale succeeds  on $Z$.  
A real $z \in [0,1) $ is called  \emph{computably random} if its binary expansion is computably random. \end{deff} 
In fact,  it suffices to require that no  rational-valued martingale succeeds on the binary expansion of $z$ (\cite{Schnorr:75}, also see \cite[7.3.8]{Nies:book}).
We mention some facts about computable randomness. For details, definitions, references and  proofs  see for instance \cite[Ch.\ 7]{Nies:book} or \cite{Downey.Hirschfeldt:book}.

Computable randomness    lies strictly in between \ML\ and Schnorr randomness. Computably random sets  can have a  very slowly growing   initial segment complexity, e.g., $K(Z\uhr n) \le^+ 2 \log n$.   A left-c.e.\  computably random set can be   Turing incomplete. In fact,  such a set  exists   in each high c.e.\ degree.  %Lowness  for computable randomness  implies being  computable.
  There is a characterization  of computable randomness by Downey and Griffiths \cite{Downey.Griffiths.ea:04}   in terms of special \ML{} tests called ``computably graded tests'', and a  characterization  by Day \cite{Day:09}  via the growth of  initial segment complexity measured  in terms of so-called  	``quick process machines''.

\subsection{The savings property}

   \label{ss:savings}

% In Definitions~\ref{df:MG} and~\ref{df:CR},  we introduced martingales and  computable randomness.
 \begin{deff}\label{df:Savings} We say that a martingale $M$ has the {\it savings property}  if  $M(\rho) \ge  M(\sss) -2$  for any strings $\sss, \rho$ such that $\rho \succeq \sss$. \end{deff}
The following is well-known (see \cite{Downey.Hirschfeldt:book} or \cite[7.1.14]{Nies:book}).

 \begin{prop}  \label{prop:Savings} For each computable  martingale $L$ there is a  computable martingale $M  $ with the savings property  that  succeeds on the same sequences as $L$.   \end{prop}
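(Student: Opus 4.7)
The plan is the standard ``savings trick'': build $M$ as the sum of an active capital $A(\sigma)$ and a monotone savings account $B(\sigma)$. The active capital mimics the bets of $L$, but whenever it exceeds a chosen threshold the excess is transferred into $B$, where it is immune from future losses. The savings property then falls out of the fact that $A$ stays bounded.

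Normalize so $L(\emptyset)=1$, and invoke the standard reduction (\cite[7.3.8]{Nies:book}) to assume that $L$ is rational-valued, so that comparisons with $2$ are decidable. I then define $M$ recursively on $2^{<\omega}$. For each $\sigma$ introduce pre- and post-transfer values $A_-(\sigma),A_+(\sigma),B_-(\sigma),B_+(\sigma)$, governed by the rule: if $A_-(\sigma)\ge 2$ set $T(\sigma)=A_-(\sigma)-1$, otherwise $T(\sigma)=0$; then $A_+(\sigma)=A_-(\sigma)-T(\sigma)$ and $B_+(\sigma)=B_-(\sigma)+T(\sigma)$. Initialize $A_-(\emptyset)=1$, $B_-(\emptyset)=0$, and propagate to children by
\[
  A_-(\sigma i)\,=\,A_+(\sigma)\cdot L(\sigma i)/L(\sigma), \qquad B_-(\sigma i)\,=\,B_+(\sigma).
\]
Finally put $M(\sigma)=A_-(\sigma)+B_-(\sigma)$ (equivalently, $A_+(\sigma)+B_+(\sigma)$).

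Verification splits into three parts. \emph{Fairness:} using $L(\sigma 0)+L(\sigma 1)=2L(\sigma)$, the $A$-children sum to $2A_+(\sigma)$ and the $B$-children both equal $B_+(\sigma)$, giving $M(\sigma 0)+M(\sigma 1)=2A_+(\sigma)+2B_+(\sigma)=2M(\sigma)$. \emph{Savings bound:} the transfer rule enforces $A_+\le 1$, and since $L$-ratios are at most $2$ we get $A_-\le 2$ everywhere; moreover $B_-$ is non-decreasing along the tree, so for $\rho\succeq\sigma$,
\[
  M(\rho)\,\ge\,B_-(\rho)\,\ge\,B_-(\sigma)\,=\,M(\sigma)-A_-(\sigma)\,\ge\,M(\sigma)-2.
\]
\emph{Equal success:} after a transfer at $\tau$ we have $A_+(\tau)=1$, and by the ratio rule, for $\rho\succeq\tau$ with no intermediate transfer, $A_-(\rho)=L(\rho)/L(\tau)$; thus the next transfer occurs precisely when $L(\rho)\ge 2L(\tau)$. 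So if $L(X\restriction n)\to\infty$ along a path $X$ then $L$ doubles past each previous transfer infinitely often, triggering infinitely many transfers, each adding $T(\sigma)=A_-(\sigma)-1\ge 1$ to $B$; hence $B\to\infty$ and $M\to\infty$. Conversely, if $L$ is bounded on $X$ then only finitely many transfers occur and $M\le 2+B_\infty$ stays bounded.

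The main obstacle I expect is checking fairness after inserting the transfer step: intuitively, moving money from $A$ to $B$ reduces the free capital available for future bets, and one must confirm that this ``savings bleed'' is absorbed correctly by the martingale equation. The bookkeeping device of pre- and post-transfer values makes the computation essentially automatic, since within a single string the transfer is mass-preserving between $A$ and $B$. Decidability of the transfer condition $A_-(\sigma)\ge 2$ is a minor technical point, deflated by the initial reduction to a rational-valued $L$.
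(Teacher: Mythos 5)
Your construction is the paper's own argument in different notation: a checking account that bets with $L$'s factors and a monotone savings account fed by periodic transfers, with $M$ their sum; the fairness check, the ``infinitely many transfers iff $L$ is unbounded along the path'' argument, and the reduction to rational-valued $L$ for decidability all match the paper. (One small omission: you should also assume $L(\sigma)>0$ for all $\sigma$, as the paper does, so that the ratios $L(\sigma i)/L(\sigma)$ make sense.)

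There is, however, an arithmetic slip in your verification of the savings bound. Your transfer fires only when $A_-(\sigma)\ge 2$, so for $A_-(\sigma)\in(1,2)$ nothing is skimmed off and $A_+(\sigma)=A_-(\sigma)$ can be arbitrarily close to $2$; the claim ``$A_+\le 1$'' is false, and consequently $A_-(\sigma i)\le 2A_+(\sigma)$ can approach $4$ rather than $2$. Your displayed chain ends with $M(\sigma)-A_-(\sigma)\ge M(\sigma)-2$, which therefore fails; as written you only obtain the savings property with constant $4$. The construction itself is fine, though: for $\rho\succ\sigma$ one has $B_-(\rho)\ge B_+(\sigma)=M(\sigma)-A_+(\sigma)>M(\sigma)-2$, since $A_+(\sigma)<2$ always holds (it equals $1$ after a transfer and equals $A_-(\sigma)<2$ otherwise); so route the estimate through the post-transfer value $B_+(\sigma)$ instead of $B_-(\sigma)$. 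Alternatively, lower the trigger to $A_-(\sigma)>1$ (still decidable for rational-valued $L$), which restores $A_+\le 1$, hence $A_-\le 2$, and makes your chain correct as written; this is essentially what the paper does, keeping the checking account at most $1$ after each transfer by starting from $L(\estring)<1$ and moving a unit to savings whenever the post-bet value exceeds $1$.
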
  
%Arguing as  in the solution to  \cite[Exercise 7.1.14]{Nies:book}),
  \begin{proof}  We may assume that $L (\sss ) >0 $ for each $\sss\in \strcantor$, and $L(\estring) <1$.  As mentioned above, we may also assume that $L$ is rational valued  by a result of Schnorr (see  \cite[Prop.\  7.3.8]{Nies:book}). We let $M=G+E$, where  $G(\sss)\in \NN$ is the  balance of the  ``savings account'',  and~$E(\sss)$ is the  balance of   the ``checking account'' at $\sss$. The function $E$  is a supermartingale (see \cite[Section  7.2]{Nies:book}) bounded by~$2$.   It uses    the same betting factors $L(\rho  \ape b)/L(\rho ) $ as~$L$ for  a string~$\rho$ and $b \in \twoset$, but in between subsequent  bets it   may transfer capital to the savings account.

 For each string $\rho$, whenever  $b \in \twoset$ and the betting   results in a value  $v >1$ at $\rho \ape b$,   we transfer $1$  from the checking to  the savings account, defining  $G(\rho \ape b) = G(\rho)+1$;  in this   case $E$    has the capital $v -1$  at the  string  $  \rho  \ape  b$.    If $v \le 1$  we let  $ E(\rho  \ape  b) =v$ and  $G(\rho  \ape  b) = G(\rho)$.
   
 $M= G+E$ has the savings property because,   if $\rho  \succeq \sss$ are strings, then \bc $M(\rho) -  M(\sss) \ge E(\rho) - E(\sss) \ge  -2$. \ec  If $L$ succeeds on $Z$  then $\lim_n G(Z \uhr n) = \infty$, whence $\lim_n M(Z \uhr n) = \infty$. Since  the rational valued martingale $L$ is computable   so is $M$:  its values are  obtained by applying  arithmetical operations to the values of $L$, which are given effectively by computable  Cauchy names for reals, and arithmetical operations preserve this.  \end{proof}

In general,  if  $M$  is a martingale, then  $M(\sss) \le \tp {\sssl} M(\estring)$  for each string $\sss$. If  $M$ has the savings property,  then   in fact  
 
 \begin{equation} \label{eq:bdMG} M(\sss) \le 2 \sssl+ M(\estring). \end{equation}
 For otherwise,  there is $\tau \ape i \preceq \sss$  for some $i \in \twoset$ such that $M(\tau  \ape  i ) > M(\tau) +2$, whence $M(\tau \ape  (1-i)) < M(\tau) -2$.

\subsection{A correspondence between martingales and nondecreasing   functions}  
\label{ss:corrMGfunctions} 
For  a string $\sss \in \strcantor$ we will write   \[[\sss) = [0.\sss, 0.\sss + \tp{-\sssl}); \]
we use the notation  $[\sss]$ either to denote the cone $\{ X \colon \, X \succ \sss\}$ in Cantor space, or the corresponding closed subinterval of $[0,1]$.
 
 Each  martingale $M$ determines  a  measure  on the algebra of clopen sets by  
  assigning $[\sss]$ the value $ \tp{-\sssl} M(\sss) $.   Via Carath\'eodory's  extension  theorem this measure  can be extended to  a  Borel measure  on Cantor Space. We say that $M$ is \emph{atomless} if this measure is atomless, i.e., has no point masses.  Note that, by (\ref{eq:bdMG}) every martingale with the savings property is atomless.   If the measure  is atomless,  via the binary expansion of reals  (see Subsection~\ref{sub:binary_expansions}) we can  also  view it  as a  Borel measure  $\mu_M$ on  $[0,1]$.  Thus,  $\mu_M$ is determined by the condition  
  %
  %(This means that there is no $Z \in \cantor$ such that $M$ eventually always doubles its capital along $Z$.  
  %
  \begin{equation} \label{eqn:MartSlope} \mu_M [\sss) = \tp{-\sssl} M(\sss). 
\end{equation}
  We use the equality (\ref{eqn:MartSlope}) above  to  establish a relationship  between   atomless martingales and  nondecreasing continuous  functions. (This  is essentially a special case of the correspondence between signed Borel measures on $[0,1]$ and  left-continuous functions of bounded variation  that vanish at $0$. See, for instance, \cite[8.14]{Rudin:74}. We will need the notation and details of this correspondence later on.)
 
 \vsp
 
 \n  {\it Atomless martingales to nondecreasing continuous   functions on $[0,1]$.}     Given an atomless  martingale~$M$, let $\cdf (M)$  be  the cumulative distribution function of the associated measure. That is, 
  \[ \cdf(M)(x) = \mu_M[0,x).  \] Then  $\cdf(M)$ is nondecreasing and  continuous since the measure is atomless. 
Hence it is   determined  by its values on the rationals.

  \vsp
  
 \n We   let $I_\Q = [0,1] \cap \QQ$.
  
   \n  {\it Nondecreasing functions  with domain containing  $I_\Q$ to  martingales.}    Suppose  $f$ is      
a nondecreasing function with a domain containing $I_\Q$. We  will  write  
   \begin{equation} \label{eqn:Martf}  \Mart(f)(\sss) =  S_f(\sss)  =( f(0.\sss + \tp{-\sssl}) -f(0.\sss)   )/ \tp{-\sssl}. \end{equation}
Let  $M = \Mart(f)$. We have, for instance, $M(10)= S_f(\frac  1 2, \frac 3 4)$, and $M(11) = S_f(\frac 3 4, 1)$.  That $M$ is a martingale follows from the   averaging condition  on slopes in  (\ref{eqn:weighted_average_slopes}). 
To see that  $M$ is a martingale, fix $\sss$.    Let $a = 0.\sss, x= 0.\sss + \tp{-\sssl-1}$, and $b= 0.\sss + \tp{-\sssl}$. By the   averaging condition  on slopes in  (\ref{eqn:weighted_average_slopes}) we have 
\[ M(\sss) = S_f(a,b) = S_f(a,x)/2 + S_f(x,b)/2= M(\sss 0) /2 + M(\sss 1)/2.\]
 
\begin{fact}\label{fa:correspondence} The transformations defined  above induce a correspondence between atomless martingales and nondecreasing   continuous functions on $[0,1]$    that vanish at $0$. In particular:
\bi \itone  Let $M$ be an atomless  martingale. Then  $\Mart(\cdf(M)) =M$. 

\ittwo  Let $f$ be a nondecreasing   continuous function  on $[0,1]$  such that  
 $f(0) = 0$. Then
  $\cdf(\Mart(f)) = f$. \ei \end{fact}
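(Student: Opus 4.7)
The plan is to unfold the definitions of $\cdf$ and $\Mart$, use the defining identity (\ref{eqn:MartSlope}) for $\mu_M$, and exploit that a continuous function on $[0,1]$ is determined by its values on the dense set of dyadic rationals.

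For (i), fix an atomless martingale $M$ and set $f = \cdf(M)$. For any string $\sss$, using $f(x) = \mu_M[0,x)$ and finite additivity,
\[
f(0.\sss + 2^{-|\sss|}) - f(0.\sss) \;=\; \mu_M[0.\sss,\, 0.\sss + 2^{-|\sss|}) \;=\; \mu_M[\sss) \;=\; 2^{-|\sss|} M(\sss),
\]
the last equality being (\ref{eqn:MartSlope}). Dividing by $2^{-|\sss|}$ and comparing with (\ref{eqn:Martf}) gives $\Mart(\cdf(M))(\sss) = M(\sss)$, as required.

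Part (ii) has two steps, and the main obstacle is the first. Let $f$ be nondecreasing and continuous with $f(0) = 0$, and set $M = \Mart(f)$; this is a martingale by the averaging computation already recorded in the text. Before one can form $\cdf(M)$, one has to check that the Borel measure associated with $M$ is atomless. Given $z \in [0,1]$ and $n \in \NN$, choose a length-$n$ string $\sss_n$ with $z \in [0.\sss_n, 0.\sss_n + 2^{-n}]$. The point mass at $z$ is then bounded above by
\[
2^{-n} M(\sss_n) \;=\; f(0.\sss_n + 2^{-n}) - f(0.\sss_n),
\]
which tends to $0$ by continuity of $f$. Hence $g := \cdf(M)$ is a well-defined, nondecreasing, continuous function that vanishes at $0$.

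It remains to show $g = f$. By continuity of both sides, it suffices to verify this on dyadic rationals $k \cdot 2^{-n}$, which I handle by induction on $n$. The base case $n = 0$ is $g(0) = 0 = f(0)$ together with $g(1) = \mu_M[0,1) = M(\estring) = f(1)$. For the inductive step, the same computation as in part (i) yields, for every length-$n$ string $\sss$,
\[
g(0.\sss + 2^{-n-1}) - g(0.\sss) \;=\; 2^{-n-1} M(\sss 0) \;=\; f(0.\sss + 2^{-n-1}) - f(0.\sss),
\]
so the inductive hypothesis at level $n$ propagates to level $n+1$. Everything beyond the atomlessness check is routine bookkeeping.
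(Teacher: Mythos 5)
Your proposal is correct and follows essentially the same route as the paper: both rest on the single computation that $2^{-|\sss|}\Mart(f)(\sss)$ equals the increment $f(0.\sss+2^{-|\sss|})-f(0.\sss)$, so that the two objects agree on dyadic data. The only (harmless) differences are that you make explicit the atomlessness of $\Mart(f)$, which the paper leaves implicit, and that where the paper concludes $\mu_M=\mu$ at the level of measures via uniqueness of the Carath\'eodory extension, you instead conclude $\cdf(M)=f$ directly by induction on dyadic levels together with density and continuity.
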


\begin{proof} 
%(i) Let $f = \cdf(M)$.  For each string  $\sss$ we have \bc  $\Mart(f)(\sss) = S_f (\sss) = \tp{\sssl}  \mu_M[\sss) = M(\sss)$. \ec
%
 (i) is clear. For (ii), let $M = \Mart(f)$.  Let $\mu$ be the measure on $[0,1]$ such that $\mu[0,x) = f(x)$ for each $x$. Then $M(\sss) =  \tp{\sssl} \mu[\sss)$ for each~$\sss$. Hence $\mu_M = \mu$ and $\cdf (M) (x) = \mu_M[0,x)= f(x)$ for each~$x$.    \end{proof}

Recall the definition of $D_V(z)$ from  Subsection~\ref{ss:diff_prelims}.  

\begin{thm} \label{thm:MonSuc} Suppose $M$  is a  martingale with  the savings property (see Subsection~\ref{ss:MG}).  Let $g= \cdf(M)$.     Suppose  $z \in [0,1]$  is not a dyadic rational. Then the following are equivalent:

\bi \itone   $M$ succeeds on the binary expansion of $z$.

\ittwo                   $\underline D g (z) = \infty$.  
\itthree $D_\Q g(z)= \infty$.
\ei 
\end{thm}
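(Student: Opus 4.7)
The plan is to prove the cycle $(i) \Rightarrow (ii) \Rightarrow (iii) \Rightarrow (i)$. The implication $(ii) \Rightarrow (iii)$ is immediate from the chain of inequalities~(\ref{eqn:DfDV}), which gives $\ul D g(z) \le D_\Q g(z)$. For $(iii) \Rightarrow (i)$, I would let $\sss_n$ denote the length-$n$ prefix of the binary expansion $Z$ of $z$ and write $[\sss_n) = [a_n, b_n)$ with $b_n - a_n = \tp{-n}$. Since $z$ is not dyadic rational, $a_n < z < b_n$ strictly, so the rational pair $(a_n, b_n)$ is admissible for the $D_\Q$ computation. Specializing the hypothesis $D_\Q g(z) = \infty$ to this sequence of pairs (whose lengths $\tp{-n}$ tend to $0$) forces $S_g(a_n, b_n) \to \infty$; but by~(\ref{eqn:MartSlope}) this slope equals $M(\sss_n)$, so $M$ succeeds on $Z$.

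The substantial direction is $(i) \Rightarrow (ii)$, and this is where the savings property plays a decisive role. First, the savings property upgrades success from ``unbounded'' to ``tends to $\infty$'': if $M(\sss_k) \ge C + 2$ for some $k$, then $M(\sss_n) \ge C$ for every $n \ge k$, so in fact $M(\sss_n) \to \infty$. Next, given a small $h > 0$, I would take $n = n(h)$ maximal with $z + h < b_n$, so that $[z, z+h) \subseteq [\sss_n)$; since $b_n \to z^+$ (as $z$ is not dyadic and $b_n - a_n = \tp{-n}$), we have $n(h) \to \infty$ as $h \to 0^+$, and hence $M(\sss_{n(h)}) \to \infty$. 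Inside $[z, z+h)$ I would then locate a dyadic sub-interval $[\rho)$ of length at least $h/8$: fix the dyadic level $m$ with $h/8 \le \tp{-m} \le h/4$, so that $h / \tp{-m} \ge 4$ and several length-$\tp{-m}$ dyadic intervals fit entirely inside $[z, z+h)$. Applying the savings property to $[\rho) \subseteq [\sss_{n(h)})$ gives $M(\rho) \ge M(\sss_{n(h)}) - 2$, and consequently
\[
	\mu_M[z, z+h) \;\ge\; \mu_M[\rho) \;=\; |[\rho)| \cdot M(\rho) \;\ge\; \frac{h}{8}\bigl(M(\sss_{n(h)}) - 2\bigr),
\]
which yields $S_g(z, z+h) \ge (M(\sss_{n(h)}) - 2)/8 \to \infty$. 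The case $h < 0$ is handled symmetrically, using that $a_n \to z^-$.

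The main obstacle is concentrated in this last implication. The subtlety is that knowing $M(\sss_n)$ is large --- equivalently, that the average slope of $g$ over the dyadic interval $[\sss_n)$ is large --- gives no a priori control over slopes of arbitrary small intervals around $z$, because the $\mu_M$-mass inside $[\sss_n)$ could in principle concentrate far away from $z$. The savings property is exactly what rules out such concentration: density cannot drop by more than $2$ when passing to any sub-dyadic-interval, so mass is distributed roughly uniformly across $[\sss_n)$. After this uniformization, the remaining input is the routine geometric observation that a dyadic interval of length $\Theta(h)$ fits entirely inside $[z, z+h)$.
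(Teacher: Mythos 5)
Your proof is correct and follows the same cycle (i)$\to$(ii)$\to$(iii)$\to$(i) as the paper, with (ii)$\to$(iii) and (iii)$\to$(i) essentially identical to the paper's. The genuine variation is in (i)$\to$(ii). The paper picks a prefix $\rho \prec Z$ by hand, long enough that $M \ge r$ on \emph{every} string extending $\rho$ (using savings past a point where $M(Z\!\uhr n) \ge r+2$) and chosen with a $01$-pattern so that the binary expansion of $z+h$ still extends $\rho$ for $|h|$ small; it then writes $(z,z+h)$ as a prefix-free union $\bigsqcup_{\sss \in W}[\sss]$ of dyadic intervals with each $\sss \succeq \rho$ and sums to get $\mu_M(z,z+h) \ge r h$ exactly. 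You instead work with the full dyadic interval $[\sss_{n(h)})$ enclosing $[z,z+h)$, use the savings property once to upgrade success to $M(\sss_n)\to\infty$, and a second time to transfer that lower bound to a single dyadic sub-interval $[\rho) \subseteq [z,z+h)$ of length at least $h/8$, yielding $S_g(z,z+h) \ge (M(\sss_{n(h)})-2)/8$. Your route loses a harmless constant factor but dispenses with the bit-level bookkeeping ($Z(i)=0$, $Z(i+1)=1$, the next $0$, etc.) that the paper needs in order to keep $z+h$ under $\rho$; in exchange you need the small geometric observation that a dyadic interval of length in $[h/8,h/4]$ fits inside $[z,z+h)$. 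The underlying mechanism --- the savings property keeps the density of $\mu_M$ from collapsing on dyadic sub-intervals near $z$ --- is the same in both.
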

The proof will show that the  implications (ii)$\to$(iii)$\to$(i)   do not rely on  the hypothesis that $M$ has the savings property. However,  we always  need the weaker property that $M$ is atomless to ensure that $\cdf (M)$ is defined. 
\begin{proof}  Note that, since  $z  \in [0,1)$   is not a dyadic rational, its      binary expansion~$Z$  is unique.

\n (ii) $\to$ (iii).  This is immediate because, by (\ref{eqn:DfDV}) in Subsection~\ref{ss:diff_prelims}, we have $ \underline D g (z) \le D_\Q g(z)$.

	%This is clear because, if $p \le z \le q$,  then 
 %\n $S_g(p,q) \ge \min\{ S_g(p,z), S_g(z,q)\}$. 

\vsps
 
 \n (iii)$\to$(i). Given $c> 0$, choose $n$ such that   $S_g(p,q)\ge c$ whenever $p,q$  are rationals, $p \le z \le q$, and  $q-p \le \tp{-n}$. Let $\sss = Z\uhr n$. Then we have $z \in [\sss]$, and the length of this interval is   $\tp{-n}$.  Hence $M(\sss) \ge c$.

\vsps

\n (i) $\to$ (ii). We     show that for each   $r \in \NN$ there is $\eps >0$ such that $0< |h | < \eps$ implies $(g(z+h) - g(z))/h  \ge  r$. This implies  that   $\underline D g(z) = \infty$.

Note that the binary expansion $Z$ of $z$ has   infinitely many 0s and infinitely many 1s.
 Since $M$ has the savings property, there is $i \in \NN$ such that $Z(i) =0 $, $Z(i+1) = 1$,  and for $\rho = Z\uhr {i}$,  we have $\fao \tau M(\rho  \tau ) \ge r$. Let $j > i$ be  least such that $Z(j) =0$. Let $\eps = \tp{-j-1}$. If $0< |h| < \eps $ then the binary expansion of $z+h$ extends $\rho$. If $h>0$, this is because $z+ \tp{-j-1} < 0. \rho1$. If $h< 0$, then adding  $h$ to  $z$ can at worst  change the bit $Z(i+1)$ from $1$ to $0$. 

%
%the string $\sss$ is of the form $\sss' 1$ and we have $\fao \tau M(\sss' \tau ) \ge r$. Let $j > i$ be  least such that $z(j) =0$. Let $\eps = \tp{-j-1}$. If $0< |h| < \eps $ then the binary representation of $z+h$ extends $\sss$. For if $h>0$ then $z+ \tp{-j-1} < 0. \sigma1$. If $h< 0$ then adding  $h$ to  $z$ can at worst  change the bit in position $i$ from $1$ to $0$.

For $V \sub \strcantor$ let $\Opcl V$ denote the set of infinite sequences of bits extending a string in $V$. Let $W \sub \strcantor$ be a prefix free set of strings such that $\Opcl W$ is identified with the open interval $ (z,z+h)$ in case $h > 0$, and 
$\Opcl W $ is identified with $ (z+h,z)$ in case $h < 0$.  All the strings in $W$ extend $\rho$. So we have in case $h>0$, 
\[ g(z+h) -g(z)= \mu_M (z, z+h) = \sum_{\sss \in W} M(\sss) \tp{-\sssl} \ge r \sum_{\sss \in W} \tp{-\sssl} = rh, \]
 and in case $h<0$
\[ g(z) -g(z+h) = \mu_M(z+h,z)= \sum_{\sss \in W} M(\sss) \tp{-\sssl} \ge r \sum_{\sss \in W} \tp{-\sssl} = -rh. \]
In either case we have  $(g(z+h) - g(z) )/h  \ge r$.   \end{proof}

%\subsection{Computable randomness  for reals} 

%
%Some results.

\subsection{Computable randomness is  base-invariant} 

\label{ss:CRand_base_invariant}
We give a first application of the analytical view of algorithmic randomness. %This subsection will not be needed later  on.

 If the definition of a  randomness notion for Cantor space  is based on measure, it  can be transferred right away to the reals in $[0,1]$ by the correspondence in Subsection~\ref{sub:binary_expansions}. 

 Among the notions in the hierarchy mentioned in the introduction, computable randomness is the only one not directly defined in terms of  measure. 
We argue that computable randomness of a real is independent of the choice of base for  expansion. We will use that the condition (iii) in Theorem~\ref{thm:MonSuc} is base-independent. First, we give the relevant  definitions.

Let $k \ge 2$.
% We can generalize the    definition of martingales to the case that the domain consists of the  strings of numbers in $\{0, \ldots, k-1\}$.  
A \emph{martingale for base $k$} is  a function \bc $M \colon \, \{0, \ldots, k-1\}^{< \omega} \ria \R^+_0$  \ec  with the fairness condition $ \sum_{i=0}^{k-1}  (  M(\sss i) -M(\sss) )   = 0 $, or  equivalently, 
$$ \sum_{i=0}^{k-1} M(\sss i) = k M(\sss).$$
(An example is repeatedly playing  a simple type of  lottery, where $k$ is the number of possible draws. The player has  seen a string $\sss$ of draws. She bets an amount $q \le M(\sss)$   on a     certain draw; if  she is   right she gets $(k-1) \cdot q $, otherwise she loses $q$.)

The topics in    Subsections~\ref{ss:savings} and~\ref{ss:corrMGfunctions}  can be developed more generally for martingales $M$  in base $k$. Such a martingale induces  a measure $\mu_M$ on $k^\omega$  via $\mu_M([\sss])= M(\sss) k^{-\sssl}$.  As before, we call $M$ atomless if $\mu_M$ is atomless as a measure. The remarks on the savings property after Definition~\ref{df:Savings} remain true; the condition   (\ref{eq:bdMG}) turns into $ M(\sss) \le 2 (k-1) \sssl+ M(\estring)$.   We have a transformation  $\cdf$ turning an atomless   martingale in base $k$ into a nondecreasing continuous  function on $[0,1]$ vanishing at $0$, namely,  the distribution function of $\mu_M$. There is  an inverse transformation $\Mart^k$ turning such a   function $f$ into a martingale in base $k$ via 
\bc $\Mart^k(f)(\sss) = S_f(0.\sss ,0. \sss + k^{-\sssl})$. \ec 
%Lemma~\ref{lem:DfV}  holds in this wider setting and is proved in a similar way.

 We call a sequence  $Z$  of numbers in  $\{0, \ldots, k-1\} $   \emph{computably random in base $k$} if  no computable martingale in base $k$ succeeds  on $Z$. Let us temporarily say that  a real  $z \in [0,1)$  is \emph{computably random in base $k$} if its base $k$ expansion  (with infinitely many entries  different from $ k-1$)  is computably random in base $k$.

\begin{thm} \label{thm:CRbaseinv} Let $z\in [0,1)$. Let $k, r \ge 2$ be  natural numbers.  Then 

\n 
 $z$ is computably random in base $k$ $\RA $ $z$ is computably random in base $r$.   \end{thm}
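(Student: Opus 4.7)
The plan is to use Theorem~\ref{thm:MonSuc} as a base-invariant bridge between martingales in different bases, through the associated distribution function. The key observation is that the condition $D_\Q g(z) = \infty$ refers only to slopes of $g$ across rational intervals containing $z$, and so does not depend on any base.

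First I would verify that the framework of Subsections~\ref{ss:MG}--\ref{ss:corrMGfunctions} extends verbatim to an arbitrary base $k$, as the excerpt anticipates. Specifically: (a) Proposition~\ref{prop:Savings} holds in base $k$, so any computable base-$k$ martingale can be replaced by a computable one with the savings property succeeding on the same sequences; (b) Fact~\ref{fa:correspondence} and Theorem~\ref{thm:MonSuc} generalize to base $k$, via $\mu_M[\sigma] = k^{-|\sigma|}M(\sigma) = S_g(0.\sigma,\, 0.\sigma + k^{-|\sigma|})$. As in the binary case, the implications $\ul D g(z) = \infty \Rightarrow D_\Q g(z) = \infty \Rightarrow M$ succeeds on the base-$k$ expansion of $z$ do not use the savings property.

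Second, I would run a contrapositive argument. Note that if $z$ is computably random in any base then $z$ is irrational, since any rational has a computable expansion in every base, which a trivial martingale defeats; hence the expansions of $z$ in both bases are unique. Assume $z$ is not computably random in base $r$, witnessed by a computable base-$r$ martingale $L$, which by (a) we may assume has the savings property. Let $g = \cdf(L)$; then $g$ is nondecreasing, continuous, and vanishes at $0$. The values $g(p/r^n)$ are finite sums of values of $L$, hence computable uniformly in $p, n$, so $g$ is computable on a computable dense sequence in $[0,1]$; by Proposition~\ref{prop:monotonic computable}, $g$ is a computable function in the sense of Definition~\ref{CompDef}. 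The base-$r$ analogue of Theorem~\ref{thm:MonSuc} applied to $L$ gives $D_\Q g(z) = \infty$. Now form the base-$k$ martingale $M' = \Mart^k(g)$; since $g$ is computable, so is $M'$. Applying the base-$k$ analogue of (iii)$\to$(i) of Theorem~\ref{thm:MonSuc}, which does not require the savings property for $M'$, we conclude that $M'$ succeeds on the base-$k$ expansion of $z$, contradicting computable randomness of $z$ in base $k$.

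The main burden is bookkeeping: extending Subsections~\ref{ss:MG}--\ref{ss:corrMGfunctions} uniformly to base $k \ge 2$ and carefully checking the base-$k$ version of Theorem~\ref{thm:MonSuc}, including the fact that the base-$k$ expansion of a non-$k$-adic-rational real contains infinitely many digits different from $0$ and from $k-1$ (needed for the savings-property argument in (i)$\to$(ii)). The one step that is genuinely analytic rather than translational is the invocation of Proposition~\ref{prop:monotonic computable} to upgrade $g = \cdf(L)$ from being merely easy to evaluate on $r$-adic rationals to being a computable function of a real argument, which is what makes $\Mart^k(g)$ computable on all $k$-adic cylinders.
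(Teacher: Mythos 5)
Your proposal is correct and takes essentially the same approach as the paper: pass to a savings-property martingale in base $r$, form its distribution function, verify computability via Proposition~\ref{prop:monotonic computable}, then read off a computable base-$k$ martingale via $\Mart^k$ and conclude success using the base-$k$ version of Theorem~\ref{thm:MonSuc}. The only cosmetic difference is that you route through condition (iii) $D_\Q g(z)=\infty$ as the explicit base-invariant bridge, while the paper passes through (ii) $\ul D g(z)=\infty$; since (ii)$\Rightarrow$(iii)$\Rightarrow$(i) holds without the savings property, these are the same argument.
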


\begin{proof}  We may assume $z$ is irrational. %just like girls
 Let $Z $ be the base $k$ expansion, and  let $Y$ be the base $r$ expansion of $z$. Suppose $Y$ is not computably random in base~$r$. Then  some computable martingale $M$ in base $r$ with the savings property succeeds on $Y$. By    (\ref{eq:bdMG})    for base $r$  we have $M(\sss) \le 2 (r-1)\sssl + O(1)$,  whence $M$ is atomless. Hence $\mu_M$ is defined and the associated distribution function 
   $f= \cdf(M)$ is continuous. Clearly,  $f(q)$ is uniformly computable for   any rational    $q \in [0,1]$ of the form $ir^{-n}$, $ i \in \NN$. Hence, by Proposition~\ref{prop:monotonic computable}, $f$ is computable.  Therefore the martingale   in base $k$ corresponding to $f$, namely   $N=\Mart^k(f)$,  is  atomless and computable. 

The proof of (i)$\to$(ii) in Theorem~\ref{thm:MonSuc}  works for  base $r$: replace 2 by $r$, and replace the digits 0,1 by    digits $b<c<r$ that both occur infinitely often in the $r$-ary expansion of $z$ (they exist because $z$ is irrational).  So, since $M$ has the savings property, we have $\ul D f(z) = \infty$. Note that  $f= \cdf(N)$ by Fact~\ref{fa:correspondence} in base $k$. Hence   by  (ii)$\to$(i) of  the same
Theorem~\ref{thm:MonSuc},  but  for base $k$, the computable martingale $N$ succeeds on~$Z$. 
\end{proof}

\Com{(this will go into some paper on poly time rdness) For the base invariance of polynomial time  randomness, suppose $M$ is a polynomial  time base $r$ martingale succeeding on $Y$. We use the savings property of $M$ to verify that for $x< y$ the function $f$ satisfies  the ``almost-Lipschitz condition''
\[ f(y)- f(x)  = -\log (y-x) O(y-x) . \]
Let $n\in \NN$ be least such that $ r^{-n} <  y-x$, and let the    $p$ be the least rational of the form $i r^{-n}$  such that $x \le p+r^{-n}$. Then $[x,y] \sub [p, p+ (r+1)  \cdot r^{-n }]$. Hence by consequence (\ref{eq:bdMG}) of  the savings property we have $f(p + (i+1) r^{-n}) - f(p+i r^{-n})  \le (2n + M(\estring))r^{-n}$ for each $i=0,\ldots, r+1$, whence 

\bc $f(y)- f(x) \le   f(p+ (r+1)  \cdot r^{-n}) - f(p) \le ((r+1) 2 n + (r+1) M(\estring))r^{-n}$. \ec
Since  $r^{-n} <  y-x$  and $n \sim -\log (y-x)$,  this yields the desired  bound.   

This can now be used to show that   $f$ is poly time computable on the rationals (given rational  $p= z/n$ and precision $i$ in binary we can in polynomial  time compute rational $q $ such that $|f(p)-q| \le \tp{-i}$) , and hence $N=\Mart^k(f)$ is a polynomial  time martingale succeeding on $Z$.}

%%   Let $\mu$ be   a atomless Borel measure, and let $f_\mu$ be its distribution function. 
%    \subsection{Computable martingales and computable randomness}
%    
%    Allow real values for MGs.

\section{Computable randomness and differentiability}

\label{s:CRandDiff}
   We characterize computable randomness in terms of differentiability. 

\begin{thm} \label{thm:CRdiff} Let $z\in [0,1)$. Then the following are equivalent:
\bi \itone $z$  is computably random.  

\ittwo    Each computable  nondecreasing  function   $f \colon \, [0,1] \ria \mathbb R$ is differentiable at~$z$. 

\itthree  Each computable  nondecreasing  function   $g \colon \, [0,1] \ria \mathbb R$ satisfies 

\n  $\ol D g(z) < \infty$.

\itfour  Each computable  nondecreasing  function   $g \colon \, [0,1] \ria \mathbb R$ satisfies 

\n  $\ul D g(z) < \infty$.  \ei  \end{thm}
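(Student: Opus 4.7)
The quick implications first: \textbf{(ii)}$\Rightarrow$\textbf{(iii)} is immediate since differentiability at $z$ makes $\ol Df(z)$ finite, and \textbf{(iii)}$\Rightarrow$\textbf{(iv)} follows from the pointwise inequality $\ul Dg(z)\le\ol Dg(z)$. For \textbf{(iv)}$\Rightarrow$\textbf{(i)} I argue by contraposition: if $z$ is not computably random, Proposition~\ref{prop:Savings} yields a computable martingale $M$ with the savings property succeeding on the binary expansion of $z$. By~(\ref{eq:bdMG}) the martingale $M$ is atomless, so $g=\cdf(M)$ is a nondecreasing continuous function on $[0,1]$ whose values on the rationals are uniformly computable; by Proposition~\ref{prop:monotonic computable}, $g$ is computable. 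Theorem~\ref{thm:MonSuc} now gives $\ul Dg(z)=\infty$, contradicting \textbf{(iv)}.

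The substantive part is \textbf{(i)}$\Rightarrow$\textbf{(ii)}, again proved by contraposition. Suppose $f\colon[0,1]\to\R$ is computable nondecreasing but fails to be differentiable at~$z$; the goal is to construct a computable martingale that succeeds on the binary expansion $Z$ of $z$. Lemma~\ref{lem:mid3} shows that the failure of differentiability is witnessed on intervals $(a_k,b_k)$ with $z$ in the middle third and $b_k-a_k\to 0$, where the slopes $S_f(a_k,b_k)$ either tend to $+\infty$ (call this case~A) or remain bounded with distinct limsup and liminf (case~B). The middle-third condition is essential: it keeps $z$ at distance $\ge (b_k-a_k)/3$ from each endpoint, so these intervals can be compared with basic dyadic intervals of shifted or rescaled copies of $f$ without $z$ slipping near the endpoint of the comparison interval.

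In case~A the plan is to fix a finite list of affine maps $\phi_1,\dots,\phi_L$ with dyadic-rational coefficients and introduce the computable nondecreasing functions $f_j = f\circ\phi_j$ (truncated to $[0,1]$), each producing a computable atomless martingale $M_j=\Mart(f_j)$. The key geometric lemma is that the $\phi_j$ can be chosen so that for every interval $(a,b)$ with $z$ in its middle third and $b-a\in[2^{-n-1},2^{-n}]$, some $f_j$ has a basic dyadic interval $[\sss)$ at level $n+O(1)$ whose image under $\phi_j$ is sandwiched between $(a,b)$ and a bounded enlargement of it. Since $f$ is nondecreasing, this forces $M_j(\sss)\ge c\,S_f(a,b)$ for an absolute constant $c>0$. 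Bundling the $M_j$ into a single computable martingale that bets on $Z$---using a weighted combination that internally tracks the bits of each $\phi_j^{-1}(z)$ as they are computed from those of $Z$---yields one that is unbounded along $Z$; a savings modification via Proposition~\ref{prop:Savings} then converts unboundedness into genuine success. Case~B reduces to case~A by subtracting from $f$ a piecewise-linear interpolant that cancels the lower-derivative trend while preserving the oscillations, then amplifying them across dyadic scales until the upper derivative becomes $+\infty$.

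The main obstacle is the geometric lemma on shifts: exhibiting an explicit finite family of affine reparametrizations of $f$ which, regardless of where $z$ lies, captures every middle-third interval at every dyadic scale up to a bounded factor in length and position. A secondary subtlety is assembling the $M_j$ into a single computable martingale acting on $Z$ itself, since each $M_j$ naturally bets on the binary expansion of $\phi_j^{-1}(z)$ rather than of $z$.
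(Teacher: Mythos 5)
Your outline of the easy implications and of (iv)$\to$(i) matches the paper's route (savings martingale, $\cdf$, Proposition~\ref{prop:monotonic computable}, Theorem~\ref{thm:MonSuc}); note only that Theorem~\ref{thm:MonSuc} assumes $z$ is not a dyadic rational, so rational $z$ needs a separate (easy) witness such as $g(x)=1-\sqrt{z-x}$. The substance is (i)$\to$(ii), and there your proposal has the right general shape --- middle-third intervals via Lemma~\ref{lem:mid3}, a finite family of affinely transposed dyadic grids, slopes reinterpreted as martingale capital --- but two of the three things you defer are exactly where the proof lives, and one of them is set up incorrectly. First, the geometric lemma: you ask for affine maps with \emph{dyadic-rational} coefficients. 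That cannot work as the sole ingredient. The paper's Lemma~\ref{lem:rat_intervals} takes its scaling factors and shifts with denominator an odd prime $k$, and the proof is a coprimality argument: since $\gcd(k,2^n)=1$, the classes $[1/k]$ and $[1/2^n]$ generate the same subgroup of $\QQ/\ZZ$ as $[1/(2^nk)]$, which is what lets one land a grid endpoint within $\eta=1/(2^nk)$ of any prescribed point. With dyadic shifts this density is unavailable, and the paper's counterexamples (the function $x\sin(2\pi\log_2 x)+10x$ at $0$, and Figure~\ref{fig:figure2}) show why grids commensurable with the dyadic one can completely miss the non-differentiability. So the finite family must contain non-dyadic shifts (the ``$1/3$-trick'' is the simplest instance).

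Second, and more seriously, your Case~B --- bounded slopes with $\ul Df(z)<\ol Df(z)$ --- is the hard case, and ``subtract a piecewise-linear interpolant \ldots then amplify across dyadic scales'' is not an argument. Subtracting an interpolant from a nondecreasing $f$ destroys monotonicity, so $\Mart$ of the result is no longer a martingale, and no amplification mechanism is specified. What the paper actually does here is run a Doob up-crossing strategy on an infinitely branching tree of intervals, alternating between a betting state on $(p,q)$--intervals and a non-betting state on $(r,s)$--intervals (the two grids supplied by Lemma~\ref{lem:ratspq}), defining a \emph{new} nondecreasing computable $g$ by proportional interpolation in the betting state and linear interpolation otherwise; each completed up-crossing multiplies the capital $\Gamma$ by a fixed $\delta>1$, giving $\ol Dg(z)=\infty$, after which one is reduced to the separately proved implication (i)$\to$(iii). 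Finally, your ``secondary subtlety'' is also unresolved: a weighted sum of the $M_j$ is not a martingale on $Z$, since each $M_j$ bets on the binary expansion of $\phi_j^{-1}(z)$ and carries prevent a bitwise translation; the paper closes this by passing through the $\cdf$/$\Mart$ correspondence and the invariance results (Theorems~\ref{thm:MonSuc} and~\ref{thm:CRbaseinv}). Until the coprimality lemma, the up-crossing construction, and the transfer back to $Z$ are supplied, the proof of (i)$\to$(ii) is missing its core.
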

% If $z$ is a dyadic rational then clearly each of the statements fails. In the following we assume $z$ is not a dyadic rational

\begin{proof} 
 The implications  (ii)$\to$(iii)$\to$(iv) are trivial. For the implication (iv)$\to$(i),  suppose that $z$    is not computably random. 

If $z$ is    rational, we can let $g(x) = 1- \sqrt{z-x}$ for $x \le z$ and $g(x) =1 $ for $x>z$. Clearly $g$ is nondecreasing and $\ul D g (z)= \infty$. Since $z$ is rational, $g$ is uniformly computable on the rationals in $[0,1]$. Hence  $g$ is computable by Proposition~\ref{prop:monotonic computable}.

Now suppose  that   $z$ is irrational. Let the bit sequence  $Z$ correspond to  the binary expansion of~$z$.   By Prop.\ \ref{prop:Savings}, there is a computable martingale~$M$ with the savings property such that   $ \lim_n M(Z \uhr n)=\infty$.   Let $g = \cdf(M)$. Then     $\ul D g(z) = \infty$ by  Theorem~\ref{thm:MonSuc}.  
 By the savings property of $M$,   the associated distribution function $g= \cdf(M)$ is continuous. 
 Clearly $g(q)$ is uniformly computable on the dyadic rationals in $[0,1]$. Then,  once again  by  Proposition~\ref{prop:monotonic computable},   we may conclude that  $g$ is computable.

 It remains to prove the implication (i)$\to$(ii).
 
We actually   prove (i)$\to$(iii)$\to$(ii). We begin with  (iii)$\to$(ii). A naive approach  runs into trouble, which  motivates the  algebraic Lemma~\ref{lem:rat_intervals} below.  Thereafter, we will obtain     (i)$\to$(iii) by another application  of that lemma.

%%%%%%%%%%%%%%%%%%%%%

\subsection{Proof of (iii)$\to$(ii)}

\subsubsection{Bettings on rational intervals}  For the rest of this proof,   intervals will be  closed  with distinct  rational endpoints unless otherwise mentioned. If $A = [a,b]$ we write $|A|$ for the length $b-a$.   To say that intervals are \emph{disjoint} means they are disjoint on $\R\setminus\QQ$. 
A \emph{basic dyadic interval} has the form $[i\tp{-n}, (i+1)\tp{-n}]$ for some $i \in \ZZ, n \in \NN$.

  We  prove  the contraposition  $\neg$(ii)$\ria \neg$(iii).  Suppose a  computable  nondecreasing function  $f$ is not differentiable at~$z$.   We will eventually define a computable nondecreasing function $g$ such that  $\ol Dg(z) = \infty$.  We may assume $f$ is   increasing after replacing $f$ by the function  $x \mapsto f(x) +x$.  If $\ul Df(z) = \infty$ we are done by letting $g= f$. Otherwise, 
we have   

\[ 0 \le \ul Df(z) < \ol Df(z).\]

 The nondecreasing computable function $g$ is defined in conjunction  with  a betting  strategy $\Gamma$. Instead of betting on  strings, the strategy   bets  on nodes in a   tree   of rational intervals~$A$. The root is $[0,1]$, and the tree is  ordered by  reverse inclusion. This strategy $\Gamma$  proceeds from an interval $A$   to   sub-intervals $A_k$ which are its successors on the tree.  It   maps these intervals to non-negative reals representing the   capital at that interval.   If the tree consists of the basic dyadic sub-intervals of $[0,1]$, we have essentially the same type of betting strategy as before. However,  it will be necessary to consider a more complicated tree where  nodes have infinitely many successors.
 
 We define the nondecreasing function  $g$ in such a way that  the  current capital  at   a node $A= [a,b]$  is the slope:
\begin{equation}  \label{eqn:Gamma}  \Gamma(A) = S_g(a,b) = \frac{g(b)- g(a)}{b-a}. \end{equation}
Thus, initially we define $g$ only on the endpoints of intervals in the tree, which will form a dense    sequence of rationals  in $[0,1]$ with an effective listing. Thereafter we will use Proposition~\ref{prop:monotonic computable} to extend $g$ to all reals in the  unit interval. 

%
%For a set $V \sub [0,1] \cap \Q$, let 

%\[ \ol D f_V(z) =  \lim_{h \ria 0} \sup \{ S_f(v,w)\colon \, v,w \in V \lland  v< z <  w  \lland  w-v \le h  \}\]
%and define $\ul D f_V(z) $ analogously. In the simplest case we have $\ul D_Vf(z) < \ol D_V f(z)$ where $V$ is the set of binary rationals. Then    , $z$ is not computably random by the  simple version, for instance  \cite[Thm. 6.1.3]{Downey.Hirschfeldt:book},  of the Doob martingale convergence theorem.   Choose rationals $r< s$ such that . $\ul D_Vf(z) \le lr < s \le  \ol D_V f(z)$.The idea is to  start betting bet when the capital is  

%
%By a simple diagonalization,  we may assume that  Lemma~\ref{lem:ratspq}   holds for new values of  $\beta, \gamma$, where still  (\ref{eqn:abc}) is satisfied,  that are  computable reals   but not of the form $S_f(A)$ for any rational interval $A$. (For this, we increase $\beta$ a bit, and decrease $\gamma$ a bit. )

\subsubsection{The Doob strategy} One idea in  our proof is taken from the proof of the fact   that  $\lim_n M(Z\uhr n)$ exists  for each computably random sequence~$Z$ and each computable martingale $M$:  otherwise, there are rationals $\beta,  \gamma$ such that
 \bc $\liminf_n M(Z \uhr n) < \beta < \gamma < \limsup_n  M(Z\uhr n)$.  \ec 
 In this case one defines a new computable betting strategy $G$ on  strings that succeeds on $Z$. On each string,  $G$  is  either in the betting state,  or in the non-betting state. Initially it is in the  betting state. In the betting state $G$ bets with the same factors  as $M$   (i.e.,  $G(\sss a)/G(\sss)=M(\sss a)/M(\sss)$ for the current string $\sss$ and each $a \in \twoset$), until $M$'s capital exceeds $\gamma$. From then on, $G$ does not bet until 
$M$'s capital is  below~$\beta$. On the initial segments of~$Z$,  the strategy $G$ goes through  infinitely many state changes; each time it  returns  to the non-betting state,  it has multiplied its capital by $\gamma/\beta$. Note that this is an effective version of    the technique used to prove  the first Doob martingale convergence theorem.

Recall that if  $A = [x,y]$,  for the slope of $f$ we use the shorthand $S_f(A) = S_f(x,y)$. 
 Given $z \in [0,1]-\QQ$,  let $A_n$ be the basic  dyadic interval of length $\tp{-n}$ containing $z$. Naively, one could hope that   our case  assumption $\ul Df(z) < \ol Df(z)$    becomes apparent on these  basic  dyadic intervals:   \bc $\liminf_n S_f(A_n)  <\beta  < \gamma <  \limsup_n S_f(A_n)$  \ec for some rationals $\beta < \gamma$. In this case, we   may  carry out the Doob strategy for  the martingale $M$   given by  $M(\sss)= S_f(\sss)$ as defined in \eqref{eqn:Martf},   and view it as a betting strategy   on nodes in the tree of basic  dyadic intervals.     

Unfortunately, this scenario is too simple. If we allow $z=0$ then this already becomes  apparent via the   computable increasing function $f(x)= x \sin (2 \pi \log_2 x) + 10x$, because $f(x)=10x$ for each $x$ of the form $\tp{-n}$, but $9 = \ul Df(0) < \ol Df(0) = 11$.  If $z$ is a computable irrational,   the   function indicated in Figure~\ref{fig:figure2}    satisfies   $\ul Df(z) <   \ol Df(z)$, but $S_f(A_n) =1$ for each $n$.

 \begin{figure}[htbp] 
   \scalebox{0.4}{\includegraphics{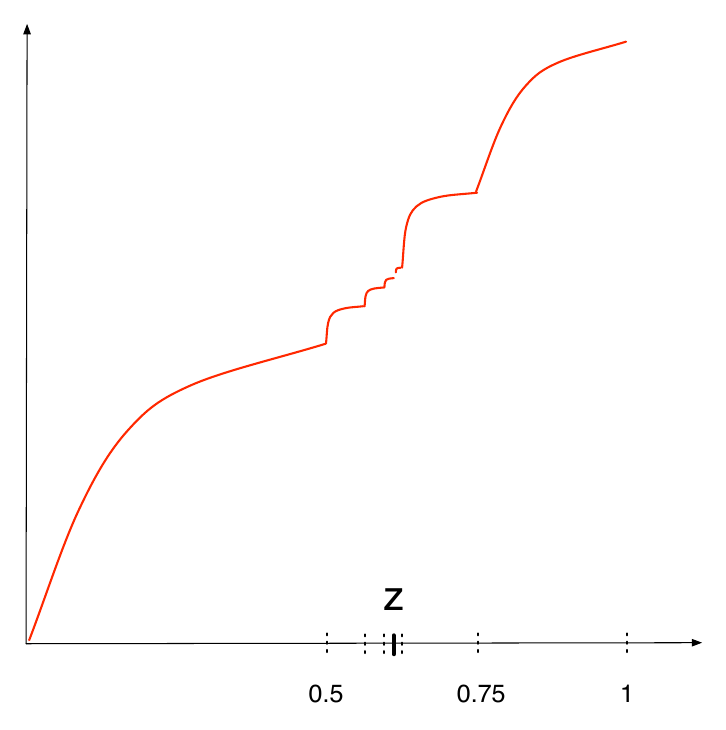}} 
   \caption{A function that is only dyadically  differentiable at $z$}
   \label{fig:figure2}
\end{figure}

We now describe how to deal with the general situation. Our main technical concept is the following. For $p,q\in\Q$, $p>0$,  we say that an interval is a  \emph{$(p,q)$--interval} if it is the image of a basic dyadic  interval    under the affine transformation $y \mapsto py+q$. Thus,   a $(p,q)$--interval   has the form 

\[ [p i \tp{-n}+q, p (i+1)\tp{-n}+ q  ] \] 
for some $i \in \ZZ, n \in \NN$.

We will show in Lemma~\ref{lem:ratspq} that there are rationals $p,q$ and $r,s$ such that 
\begin{equation} \label{eqn:pqrs}  \liminf_{\begin{subarray}{c}   |A| \to 0   \\   A \ttext{is a} (r,s)-\text{interval} \\ z \in A  \end{subarray} } S_f(A)  \, \,  <  \limsup_{  \begin{subarray}{c}    |B| \to 0   \\   B \ttext{is an}  (p,q)-\text{interval} \\ z \in B  \end{subarray}}  S_f(B).\end{equation}
The strategy $\Gamma$ is in the  betting state on $(p,q)$ intervals in the tree of intervals, and in the non-betting state on $(r,s)$--intervals. For  each state, it proceeds exactly like the Doob strategy in  the corresponding state. In addition,   when $\Gamma$  switches state, the current interval is split  into intervals of the other type (usually, into infinitely many intervals).  Nonetheless, the other state takes effect immediately. So, in  the betting state, we have to immediately bet on all the components of  this (usually)  infinite splitting.

\subsubsection{The algebraic part.}
%Lemma~\ref{lem:ratspq} is a direct consequence of an algebraic  lemma already mentioned above. 

We derive Lemma~\ref{lem:ratspq} from an algebraic lemma. For a set  $L$      of rationals, an  interval is called an  \emph{$L$--interval}
if it is a     $(p,q)$--interval  for some  $p,q \in L$.

\begin{lemma} \label{lem:rat_intervals}
For each  rational $\alpha >1$, we can effectively determine a finite set $L$ of  rationals  in $[-1,1]$  such that for each interval $[x,y]$, $0 < x< y < 1$, there   are  $L$--intervals  $A,B$   as follows:

\begin{eqnarray*} [x,y] \subset A   & \land  & \frac{|A|}{y-x }< \alpha,   \\
				 B \subset [x,y]  & \land &  \frac{y-x} { |B|} < \alpha . \end{eqnarray*}
\end{lemma}
\n Informally, we can approximate the given interval from the outside  and from the inside by $L$--intervals within a ``precision factor'' of $\alpha$. We defer the   proof of the lemma    to  Subsection~\ref{ss:Lemmaproof}.

By the hypothesis that $f'(z)$ does not exist and Lemma~\ref{lem:mid3}, we can choose   rationals $\wt \beta < \wt  \gamma$ such that

 \begin{eqnarray*}  \wt \gamma  &   <  &   \lim_{h \ria 0} \sup \{ S_f(x,y)\colon \,  0 
 \le y-x \le h \lland  z  \in   (x,y)  \},  \\
\wt \beta  &   > &    \lim_{h \ria 0} \inf \{ S_f(x,y )\colon \,  0 
 \le y-x \le h \lland    z  \in  \text{  middle third of}  \  (x,y) \} \end{eqnarray*}

  Let   $\alpha,  \beta,  \gamma$ be rationals such that $1< \alpha < \frac 4 3 $ and 
%\label{eqn:abc}   
\bc $   \wt  \beta \alpha <  \beta <       \gamma <    \wt  \gamma / \alpha$.  \ec 
\begin{lemma} \label{lem:ratspq} There are rationals   $p,  q$, $r,  s$, such that  $p, r > 0$ and 
\begin{eqnarray*}  \gamma  &   <  &   \lim_{h \ria 0} \sup \{ S_f(A)\colon \, A \ttext{is a} (p,q)\text{--interval}    \lland    |A| \le h  \lland  z \in A  \},  \\
\beta  &   > &    \lim_{h \ria 0} \inf \{ S_f(B)\colon \, B \ttext{is an} (r,s)\text{--interval}   \lland  |B| \le h    \lland  z \in B  \}. \end{eqnarray*}
  \end{lemma}

\begin{proof} Let $L$ be as in  Lemma~\ref{lem:rat_intervals}.  
For the first inequality, we  use the first  line in Lemma~\ref{lem:rat_intervals}. 

%We actually  do not need the condition on middle thirds in the definition of $\wt \gamma$.

 Let   $h > 0$ be given. Choose  reals $x< y$, where $x \le  z \le y$,  such that  $y-x < h/ \alpha $ and $S_f(x,y) > \wt\gamma$. By Lemma~\ref{lem:rat_intervals} there is an $L$--interval $A= [u,v]$ such that $[x,y] \sub A $ and $|A|/ (y-x)  < \alpha$.   Then, since $f$ is nondecreasing and $v-u < \alpha (y-x)$, we have 
\[ S_f(A)  = \frac{f(v)-f(u) }{v-u}  \ge \frac{f(y) -f(x) }{ v-u} > \frac{f(y) -f(x) }{ (y-x)\alpha} >  \wt \gamma/ \alpha >  \gamma .\]
 Since $L$ is finite, we can now pick  a single pair of rationals $p,q  \in L$ which works for arbitrary small $h>0$,  as required.

For the second inequality, the argument is similar, based on  the second line in Lemma~\ref{lem:rat_intervals}.  However, we  also      need the condition on  middle thirds in the definition of $\wt \beta$, because when we replace an interval $[x,y]$ by a  subinterval $B$ that is an $(r,s)$-interval,  we   want  to ensure that  $z\in B$.

Given    $h > 0$, choose  reals $x< y$, where $x \le  z \le y$ and $z$ is in the middle third of $[x,y]$,  such that  $y-x < h/ \alpha $ and $S_f(x,y) <  \wt\beta$. By Lemma~\ref{lem:rat_intervals} there is an $L$--interval $B= [u,v]$ such that $B \sub [x,y]  $ and $  (y-x)/|B|   < \alpha$.   Since $\alpha < 4/3$  and $z$ is in the middle third of $[x,y]$, we have $z \in B$.  Similar to the estimates above, we have 
\[
S_f(B)     \le \frac{f(y) -f(x) }{ v-u} \le  \frac{f(y) -f(x) }{ (y-x)/ \alpha}  = \alpha S_f(x,y) < \alpha \wt \beta  < \beta.\qedhere
\]
\end{proof}

\subsubsection{Definition of $g$ on a dense set,  and the strategy  $\Gamma$} \label{sss:Defg}
In the following fix $p,q,r,s$ as in Lemma~\ref{lem:ratspq}. 
 Recall that we  plan to  define an infinitely branching tree of intervals, and that,  on  each   node  $A$ in this tree, the strategy is  either 

\bi \item 

 in a \emph{betting state}, betting on  smaller and smaller $(p,q)$--sub-intervals of $A$, or 
 
\item  in a \emph{non-betting state}, processing   smaller and smaller   $(r,s)$--sub-intervals of $A$, but  without betting. \ei

%  Immediately after  switching its state,  the strategy   partitions  the current  interval   into possibly infinitely many intervals of the opposite type.

The root of the tree is   $A =[0,1]$. Initially let $g(0)= 0$ and $g(1)=1$ (hence     $\Gamma(A )= 1$),  and put the strategy $\Gamma$  into the betting state. 

One technical problem is that we never know a computable real   such as $S_f(A)$ in its entirety; we only have rational approximations. For a real $x$ named  by a  Cauchy sequence as in Subsection~\ref{ss:CompReals}, we let $x_n$ denote the $n$-th term of that sequence. Thus $|x- x_n |\le \tp{-n}$.  To make use of the inequalities in Lemma~\ref{lem:ratspq}, 
we choose  $K\in \NN $   large enough that the inequalities  still hold with $\gamma + \tp{-K}$ instead of $\gamma$, and with $\beta - \tp{-K}$ instead of $\beta$, respectively. We also require that $\beta + \tp{-K}< \gamma - \tp{-K}$.

 Suppose $A=[a,b]$ is an interval such that   $\Gamma(A)$ has already been  defined. By hypothesis $S_f(A)$ is a computable real uniformly in $A$. Proceed according to the case that applies. 
 
 \vsp 
\n  (I): {\it  $\Gamma$ is in the betting state on $A$.}

\vsps 

\n (I.a)  $S_f(A)_K \le  \gamma$.  If $\Gamma$ has  just entered the betting state on  $A$, let \bc  $A = \bigsqcup_k A_k$ \ec  where the $A_k$ form an effective   sequence of $(p,q)$--intervals that are disjoint (on $[0,1] \setminus \Q$). Otherwise, split  $A = A_0 \cup A_1$ into disjoint intervals of equal length.

 The function $g$    interpolates between $a$ and $b$ with a growth proportional to the growth of   $f$:  if  $v \in (a,b)$ is  an endpoint of a new interval,  define
\[ g(v) = g(a) + (g(b)-g(a)) \frac{f(v)- f(a)}{f(b) - f(a)}.\]
    Continue the strategy on each sub-interval.

\vsps
%

%We will verify that for each $k$ we have  \[ \Gamma(A_k) = \Gamma(A)  \frac{S_f(A_k)}{ S_f(A)}.\]

%(That is, we bet with the same factors as $S_f$ would.) 
\n   (I.b)  $S_f(A)_K > \gamma$. Switch  to the non-betting state on   $A$ and goto (II).  

\vsp

\n  (II)   {\it  $\Gamma$ is in the non-betting state on $A$.}     
\vsps

\n   (II.a)  $S_f(A)_K \ge   \beta$. If $\Gamma$ has just entered the non-betting state on $A$, let  \bc $A =\bigsqcup_k A_k$ \ec  where the $A_k$ form an effective   sequence of $(r,s)$--intervals that are disjoint on $[0,1] \setminus \Q$, and further,  $2|A_k| \le |A|$ for each $k$. Otherwise split  $A = A_0 \cup A_1$ into disjoint intervals of equal length.

If  $v \in (a,b)$ is  an endpoint of a new interval, then $g$   interpolates linearly:  let 
\[ g(v) = g(a) + (g(b)-g(a)) \frac{v- a}{b - a}.\]
Continue the strategy  on each sub-interval.

%We will verify that  $ \Gamma(A_k) = \Gamma(A)$  for each $k$. 
%(That is,   we  do not bet.) 

\vsps

\n   (II.b)  $S_f(A)_K < \beta$.  Switch  to the  betting state on   $A$ and goto (I).

%\n This ends the construction. 

\subsubsection{The verification.}
If the strategy  $\Gamma$,  processing an interval $A = [a,b]$ in the betting state,  chooses  a sub-interval  $[c,d]$, then 

\[ g(d)-g(c) = (g(b)-g(a)) \frac{f(d)- f(c)}{f(b) - f(a)}.\]
Dividing this equation by $d-c$ and recalling the definition of  the $\Gamma$-values  in (\ref{eqn:Gamma}),   we obtain

\begin{equation} \label{eqn:webet} \Gamma([c,d]) = \Gamma([a,b])  \frac{S_f(c,d)}{S_f(a,b)}. \end{equation}

The purpose of the following two claims is to extend $g$ to a computable function on $[0,1]$.  	For the rest of the proof,  we will use the shorthand
	 \bc  $g[A] =g(b)-g(a)$ \ec  for  an interval on the tree $A= [a,b]$. 
	 Recall that  we    write $S_g(A)$ for the slope  $S_g(a,b)$. Thus $\Gamma(A) = S_g(A)= g[A]/|A| $. 
%%%%%%%%%%%%%%%%%%%%%%%%
\begin{claim} \label{cl:limtreepath} Let $x \in [0,1]$. Let $B_0 \supset B_1 \supset \cdots$ be an   infinite path on the tree of    intervals. Then  $\lim_m g[B_m]=0$. \end{claim}
	
 	 We  consider the states of the betting strategy $\Gamma$  as it processes  the intervals $A= B_m$.

	\vsps

	\n {\it Case 1:}  $\Gamma$ changes its  state  only  finitely often when  processing the intervals~$B_m$.    

	\n If $\Gamma$ is eventually in a non-betting state then clearly $\lim_m g[B_m]=0$. Suppose otherwise, that is, $\Gamma$ is eventually in a betting state. Suppose further that $\Gamma$ enters the betting state for the last time when it defines the interval $A = B_{m^*}$.    Then for all  $m  \ge m^*$,  by (\ref{eqn:webet}) and since  $S_f(B_m)_K \le \gamma$, we have 

	\[\Gamma(B_m )= \Gamma(B_{m^*})   \frac{S_f(B_m)}{S_f(B_{m^*})} \le   (\gamma + \tp{-K})\frac{\Gamma(B_{m^*})}{S_f(B_{m^*})}=:C.\]
	 Hence $g[B_m] =  \Gamma(B_m) \cdot |B_m|   \le C |B_m|  $.  

	\vsps

	\n {\it Case 2:} 
	  $\Gamma$ changes its  state infinitely often when  processing the intervals~$B_m$.  

	 Let $B_{m_i}$ be the interval $A$ processed when the strategy is for the $i$-th time   in a   betting state at (I.b). Note that  $g[B_{m_i+1}]\le g[B_{m_i}]/2$ because at (II.a) we chose all  the splitting components $A_k$ at most half as long as the given interval $A$.  Of course, by monotonicity of $g$  we have $g[B_{m+1} ] \le g[B_m]$ for each $m$. Thus,   $ g[B_{m}] \le \tp{-i} $ for each $m > m_i$. This completes the proof of the claim.

\begin{claim} The function  $g$  can be extended to a computable function on $[0,1]$. \end{claim}

 \n Let $V$ be the set of endpoints of intervals on the tree. Clearly $V$ is dense in $[0,1]$.   For $x\in [0,1]$ let 
\begin{eqnarray*}  \ul g(x) & =&  \sup \{g(v)\colon \, v< x, v \in V \} \\  \ol g(x) & = &  \inf \{g(w)\colon \, w> x, w \in V\}. \end{eqnarray*}
 We show that $\ul g(x) \ge \ol g(x)$. Since $g$ is nondecreasing on $V$, this will imply that $\ul g = \ol g $ is a continuous extension of $g$.

There is an infinite path $B_0 \supset B_1 \supset \cdots$  on the tree of    intervals   such that $x \in \bigcap_m B_m$.
By Claim~\ref{cl:limtreepath}, we have \bc  $\ul g(x) \ge  \sup_m g( \min \, B_m) =  \inf_m g( \max \, B_m)  \ge \ol g(x)$. \ec

 Clearly there  is a computable dense sequence of rationals  
$\{v_i\}\sN i$    that lists without repetition   the set $V$ of  endpoints of intervals in the tree. By definition, $g(v_i)$ is a computable real uniformly in $i$.   
 Since $\ul g$ is continuous nondecreasing,  by Proposition~\ref{prop:monotonic computable} we may conclude that $\ul g$ is computable.  This establishes the claim.

From now on we will use the letter  $g$ to denote  the function extended to $[0,1]$.

%%%%%%%%%%

\begin{claim}   \label{claim:Dgzinf} We have  $\ol Dg(z) = \infty$. \end{claim}

\n Let $\CCC$ denote  the tree  of intervals built  during the construction. Note that for each $\epsilon>0$ there are only finitely many intervals in $\CCC$ of length greater than $\epsilon$.   To prove the   claim, we show that the strategy $\Gamma$ succeeds on $z$ in the sense that \bc $\sup_{z \in A \in \CCC} \Gamma(A)= \infty$. \ec By the definition of the $\Gamma$-values in (\ref{eqn:Gamma}) this will  imply  $\ol Dg(z) = \infty$: let $([a_n, b_n])\sN n$ be  a sequence of intervals containing $z$  such that $\Gamma([a_n, b_n])= S_g(a_n, b_n)$ is unbounded and  $ \lim_n (b_n - a_n) = 0$. If $z\in V$,  then necessarily $a_n = z$ or $b_n=z$ for almost all $n$. This  clearly implies $\ol Dg(z) = \infty$. If $z \not \in V$, then  $a_n, b_n \neq z$ for all $n$, and we have  
$S_g(a_n, b_n) \le  \max \{  S_g(a_n, z),S_g(z,b_n) \}$ by Fact~\ref{fact:slopes}.   This  also implies that  $\ol Dg(z) = \infty$. 

Now we come to the crucial argument why   $\Gamma$ succeeds, first  we verify that $\Gamma$ changes its state infinitely often on intervals $B $ such that $z \in B$. Suppose $\Gamma$ entered the betting state in (II.b) and hence jumps to (I.a).   Following the notation   in (I.a), let $A_k$ be the $(p,q)$--interval  containing $z$. By the first line in Lemma~\ref{lem:ratspq} and the definition of $K$ in~\ref{sss:Defg},  there is a $(p,q)$--interval $A \sub A_k$ containing $z$ such that $S_f(A) > \gamma+\tp{-K}$. Thus $S_f(A)_K > \gamma$ and $\Gamma$   enters the non-betting state when it processes this interval, if not before.

Similarly, once $\Gamma$ enters the non-betting state on an interval $A_k$ containing~$z$, by the second line of Lemma~\ref{lem:ratspq} it will revert to the betting state on some  $(r,s)$--interval $B\sub A_k$ containing $z$.

Now suppose   $\Gamma$ enters the betting state on $A$, $B$ is a largest sub-interval of $A$ such that $\Gamma$ enters the non-betting state on $B$, and then again,  $C$ is a largest  sub-interval of $B$ such that $\Gamma$ enters the  betting state on $C$. Then $S_f(A)_K < \beta $ while $S_f(B)_K > \gamma$,  so $\Gamma(B) = \Gamma(A) S_f(B)/S_f(A) \ge    \Gamma(A)\delta$ with $\delta=\frac{\gamma-\tp{-K}}{\beta+\tp{-K}} >1$. Also $\Gamma(B) = \Gamma(C)$. Thus, after the strategy has entered the betting state for $n+1$  times on intervals containing $z$, we have $\Gamma(A) \ge \delta^n$.  This implies that $\Gamma$ succeeds on~$z$.

%\begin{remark} \label{rem:weaker_hyp}   The implication  $\neg$(ii)$\ria$ $\neg $(iii) was actually  proved on the base of 
% the weaker hypothesis that $f$ is a   nondecreasing     function that is uniformly computable on the rationals in $[0,1]$.  \end{remark}

\begin{remark} \label{rm:scaling}  Suppose we are given a computable function~$f$ as in Theorem~\ref{thm:CRdiff} by an  index in the sense of Subsection~\ref{ss:compfunctions}. The method of the  foregoing proof  enables us to uniformly obtain an index for a  computable nondecreasing  function $p$ such that $f'(z) \UA $ implies $\ol D p(z) = \infty$ for all $z \in [0,1]$.  We simply sum  up all the possibilities for $g$.
	% with   appropriate  scaling factors. 
This list of  possibilities is effectively given: we have   $f$ itself  (for the case that already $\ul Df(z) = \infty$),  and   all the   functions $g$ obtained  for  any possible values of the rationals $p,q,r,s$ and $0 \le \beta< \gamma$ in  the construction above. 
%We fix  an effective listing  $(g_k)\sN k$   of all such functions, and  let 
% $g=f+\sum_k2^{-k}g_k$.
%Note that   $p$ is representation reducible to $f$: from a  name for $f$ we can compute a name for $g$.
\end{remark}

\subsection{Proof of (i)$\to$(iii)} %We will extend,   without mention,  previous results for functions defined on $[0,1]$ to functions defined  on   closed  intervals     with rational endpoints. 
 Suppose $\ol Dg (z) = \infty$ where    $g \colon \, [0,1] \ria \mathbb R$  is a computable nondecreasing  function.   We may assume that $z$ is irrational. % and $1/3 \le z \le 2/3$.  
We want to show that $z$ is not computably random.  We apply  Lemma~\ref{lem:rat_intervals}    for some fixed $\aaa>1$,  obtaining a finite set $L \sub [-1,1]$ of rationals. There are      $p,q$ in $L$,  $p>0 $,    such that   
%We can choose $k$ to be the prime $3$. This yields  a   set $L$ of  at most 16 rationals.  Since $\ol Dg (z) = \infty$, there is $p \in \{ 1, \frac 2 3\}$ and $q$ of the form $pv/3$, where $v$ is an integer and $|v| \le 3$, such that 
 \begin{equation} \label{eqn:bigsup}  \infty = \sup \{S_g(A) \colon \, A \ttext{is a} (p,q)\text{--interval} \lland z \in A\}.\end{equation}
 For a binary string  $\sss$, recall that $[\sss]$ is the closed basic dyadic interval determined by $\sss$.  Let   \bc  $A_\sss = p [\sss]+q$.  \ec    We may assume that  the  given computable nondecreasing  function  $g$ is  actually defined on $[-1, 2]$, so that $S_g(A_\sss)$ is defined for each $\sss$. To do so we let $g(x) = g(0)$ for $x \in [-1,0]$ and $g(x) = g(1) $ for $x \in [1,2]$ and note that this extended function is computable by Proposition~\ref{prop:monotonic computable}.  We define a computable martingale $N$ by 
 \[ N(\sss) = S_g(A_\sss). \]
 Now let $w $ be the irrational number $(z-q)/p$. Then $N$ succeeds on the binary expansion of  the fractional part $w- \lfloor w \rfloor$. For,  given $c>0$, by (\ref{eqn:bigsup})  let  $\sss$ be a string such that $z \in A_\sss$ and $S_g(A_\sss) \ge c$. Then $\sss $ is an initial segment of the binary expansion of $w- \lfloor w \rfloor$ and $N(\sss) \ge c$. 
 
 It follows from the base invariance of computable randomness proved in Theorem~\ref{thm:CRbaseinv} that $z = wp + q$ is also not computably random.
   \end{proof}

\subsection{Proof of Lemma~\ref{lem:rat_intervals}}
 \label{ss:Lemmaproof}  
 We may assume that $0 < x< y < 1/2$.
 Let $k$ be an odd prime number such that $1+8/k < \alpha$. Let 
 
 \begin{eqnarray*} P & = &  \{l/k \colon \, l\in \NN \lland k/2 <  l \le k \} , \\
 				Q & = &  \{v/k \colon \, v\in \ZZ \lland |v| \le k \} \end{eqnarray*}
				
 We claim that 
\bc $L = P \cup PQ$\ec
is a finite set of rationals  as required.  Informally  speaking,  $P$ is a set of scaling  factors for intervals,  and $PQ$ is a set of  possible shifts for intervals. 
% We can choose $k$ to be the prime $3$. This yields  a   set $L$ of  at most 16 rationals.  Since $\ol Dg (z) = \infty$, there is $p \in \{ 1, \frac 2 3\}$ and $q$ of the form $pv/3$, where $v$ is an integer and $|v| \le 3$

\vsps

\n \emph{Finding $A$.}  To obtain $A\supset [x,y]$, let $n \in \NN$ be largest such that $y-x < (1-1/k)\tp{-n}$, and let  $\eta = 1/(2^n k)$. Informally  $\eta$ is the ``resolution'' for a discrete version of the picture that will suffice to find $A$ and $B$.  By the definitions we have 
\begin{equation} \label{eqn:a} y-x + \eta < \tp{-n}. \end{equation}
Pick the  least scaling  factor $p\in P$ such that 
\begin{equation} \label{eqn:b} y-x + \eta  < p \tp{-n}. \end{equation}
Note that  $p > \min P$: if $p = \frac{k+1}{2k}$ then $y-x + \eta < p \tp{-n}$ implies $y-x < (1-   1/k) \tp{-n-1}$ contrary to the maximality  of $n$.  Therefore  we have
\begin{equation} \label{eqn:c} p \tp{-n}  \le  y-x +2\eta. \end{equation}
%(If this follows because we chose  $p$ minimal; if $p= \min P$ then $p-k^{-1} < 1/2$, so it  follows because we chose  $n$ maximal. )
%
Let $M\in \NN$ be greatest such that $M\eta < x/p  $. Now comes the key step: since $k$ and $2^n$ are coprime, in the abelian group $\QQ /  \ZZ$, the elements $1/k$ and $1/2^n$ together generate the same cyclic group as $ \eta $. Working still in $\QQ /  \ZZ$, there are $i, v_0 \in \NN$,   $0\le i < 2^n$,  $v_0 \le k$
  such that $[i/2^n] + [v_0/k] = 
[M\eta]$.  Then, since $M \eta \le 1$,  there is an integer $v$,   $|v| \le k$,  such that   

\begin{equation} \label{eqn:d} i/2^n + v/k = M \eta. \end{equation}
To define the $L$--interval  $A$, let $q = v/k \in Q$. Let  

\bc $A = p [i\tp{-n}, (i+1)\tp{-n}] + pq$. \ec 
Write $A = [a,b]$. We verify that $A$ is as required.

\n Firstly, $a = pi\tp{-n} +pq = pM \eta < x$, and $x-a \le p  \eta  \le \eta$ because of the maximality of $M$ and because $p \le 1$.

\n Secondly, $|A| = p\tp{-n}$, so we have by (\ref{eqn:b}) and (\ref{eqn:c}) that $y < b < y +2\eta$.  Then

\bc $|A| \le  y-x + 2 \eta = y-x + 2/(2^n k) \le y-x + 8(y-x)/k$, \ec
where the last inequality holds because $\tp{-n} \le 4(y-x)$ by the maximality of $n$. Thus $|A|/(y-x) \le 1+8/k < \alpha$, as required.

%%%%%%%%%%%%%%%%%%%%%%
\vsps

\n \emph{Finding $B$.} Let $\alpha = 1+ 2 \eps$. The second statement of the lemma can be derived from the first statement for the  precision factor $1+\eps$.  Let $L$ be the finite set of rationals obtained in the first statement for $1+\eps$ in place of $\alpha$.  Given an interval  $[x,y]$, let $[u,v] \sub [x,y]$ be the sub-interval   such that 

\bc $ u-x = y-v = \eps (v-u)$. \ec 

By the first statement  of the lemma there is an $L$--interval $B= [a,b] \supseteq  [u,v]$ such that  $|B|/(v-u) <  1+\eps$. Then 
\begin{eqnarray*}  u-a  & <  & \eps(v-u) = u-x \\
				b-v & <   & \eps (v-u) = y-v,  \end{eqnarray*}
whence $B \subset  [x,y]$. Clearly, $ (y-x)/|B| <  (y-x)/(v-u) = \alpha$.

\begin{remark}  \label{rmk:rats example} To illustrate the lemma and its proof,  suppose $\aaa =4$. We can choose $k$ to be the prime $3$. This yields  a   set $L$ of  at most 16 rationals.  We have  $ P=  \{  \frac 2 3, 1\}$, but the proof shows that in order to find $A$ we never choose $p = \min P$. Thus $p=1$. The shift parameter $q$ is of  the form $v/3$, where $v$ is an integer and $|v| \le 3$. Thus, every interval $[x,y]$ is contained in a basic dyadic interval shifted by some $q$, and of length less than $4 (y-x)$. A  similar fact can be shown with the usual ``1/3-trick'': the endpoints of a basic dyadic interval of length $\tp{-m}$, and another basic dyadic interval shifted by $1/3$ and of the same length, are at least $\tp{-m}/3$ apart.
\end{remark}

\section{Consequences of Theorem~\ref{thm:CRdiff}}
\label{ss:Pathak}
In this section we       provide some interesting  consequences of Theorem~\ref{thm:CRdiff} and its proof. We say that a  real $z\in \RR$ satisfies an algorithmic  randomness notion if its fractional part  $z - \lfloor z \rfloor$  satisfies it. 
  %Recall names for nondecreasing functions from Subsection~\ref{ss:comp_nondecreasing_names}.

%\begin{cor} There is a procedure to compute from a name of a  nondecreasing function $f$  the binary expansion of  a real $z$ such that $f'(z)$ exists. In particular, each computable nondecreasing function is differentiable at a computable real. \end{cor}

\begin{cor} Each computable nondecreasing function  $f$ is differentiable at a computable real. Moreover, the real can be obtained uniformly from an index for $f$.  \end{cor}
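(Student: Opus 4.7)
The plan is to reduce, via Remark~\ref{rm:scaling}, to finding a computable real at which a derived computable nondecreasing function $p$ has finite upper derivative, and then to produce such a real by running a greedy algorithm against a tailored computable martingale. Specifically, Remark~\ref{rm:scaling} yields, uniformly from an index for $f$, an index for a computable nondecreasing $p\colon [0,1] \to \R$ such that $f'(z) \UA$ forces $\ol D p(z) = \infty$ for every $z$. So it suffices to produce, uniformly in an index for $p$, a computable $z \in [0,1)$ with $\ol D p(z) < \infty$.

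Follow the proof of the implication (i)$\to$(iii) in Theorem~\ref{thm:CRdiff} applied to $p$. Fix a rational $\alpha > 1$ (say, $\alpha = 4/3$), and let $L \subset [-1,1]$ be the finite set of rationals supplied by Lemma~\ref{lem:rat_intervals}. For each pair $(p_0, q_0) \in L$ with $p_0 > 0$, that proof defines a computable martingale $N_{(p_0, q_0)}$ with the property that whenever $\ol D p(z) = \infty$, some $N_{(p_0, q_0)}$ succeeds on the binary expansion of the fractional part of $(z - q_0)/p_0$. Push the associated Borel measure $\mu_{N_{(p_0, q_0)}}$ forward under the rational affine map $w \mapsto p_0 w + q_0$, reduced modulo $1$, to obtain a computable Borel probability measure on $[0,1]$; its cumulative distribution function is continuous and uniformly computable at dyadic rationals, hence computable by Proposition~\ref{prop:monotonic computable}. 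Let $\hat N_{(p_0, q_0)}$ be the martingale associated to this cdf via Subsection~\ref{ss:corrMGfunctions}. By Theorem~\ref{thm:MonSuc} together with the affine change of variables, $\hat N_{(p_0, q_0)}$ succeeds on the binary expansion of $z$ exactly when $N_{(p_0, q_0)}$ succeeds on that of the transformed real. Form the finite sum $\hat M = \sum_{(p_0, q_0) \in L,\ p_0 > 0} \hat N_{(p_0, q_0)}$, a computable martingale for which $\hat M$ not succeeding on the binary expansion of $z$ implies $\ol D p(z) < \infty$, and hence $f'(z) \DA$.

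By the rational-valued martingale reduction cited in Subsection~\ref{ss:MG} (\cite[Prop.\ 7.3.8]{Nies:book}), replace $\hat M$ with an equivalent rational-valued computable martingale $M^*$. Define $Z \in 2^\omega$ greedily: at stage $n$, with $\sigma = Z \uhr n$, take $Z(n) \in \{0,1\}$ minimising $M^*(\sigma i)$ over $i \in \{0,1\}$, breaking ties by setting $Z(n) = 0$. The fairness equation guarantees $M^*(\sigma Z(n)) \le M^*(\sigma)$, so by induction $M^*(Z \uhr n) \le M^*(\estring)$ for all $n$, and neither $M^*$ nor $\hat M$ succeeds on $Z$. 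The real $z$ with binary expansion $Z$ is then a computable witness to $f'(z) \DA$, and the construction is uniform in the original index for $f$.

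The main obstacle is the change-of-variable step producing $\hat N_{(p_0, q_0)}$: one must verify rigorously that the affine pushforward mod $1$ yields a computable measure, that its cdf is computable, and---via Theorem~\ref{thm:MonSuc}---that success at $z$ genuinely corresponds to success of $N_{(p_0, q_0)}$ on the fractional part of $(z - q_0)/p_0$. The reduction modulo $1$ introduces a small amount of boundary bookkeeping but no essential difficulty, since the map splits into finitely many affine pieces with rational data.
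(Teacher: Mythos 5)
The skeleton of your argument is sound --- invoke Remark~\ref{rm:scaling}, build a single computable martingale from the family arising in the proof of (i)$\to$(iii), and run a greedy bit-selection to keep that martingale bounded --- and your greedy step is fine (the paper does essentially the same, except it uses the $2^{-n}$-tolerance comparison instead of first passing to a rational-valued martingale). But the change-of-variables step is not a matter of routine bookkeeping: it is where the argument breaks.

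Write $G$ for the auxiliary nondecreasing computable function from Remark~\ref{rm:scaling}. In the proof of (i)$\to$(iii), one has $N_{(p_0,q_0)}(\sigma) = S_G\bigl(p_0[\sigma]+q_0\bigr)$, which gives $\cdf\bigl(N_{(p_0,q_0)}\bigr)(x) = \bigl(G(p_0 x + q_0)-G(q_0)\bigr)/p_0$. Pushing $\mu_{N_{(p_0,q_0)}}$ forward under $w\mapsto p_0 w + q_0$ (the mod-$1$ reduction is vacuous, because the measure is already supported on the preimage of $[0,1]$ since $G$ is constant outside $[0,1]$) therefore produces a measure whose cdf is, up to an additive constant and the flat ends, $G/p_0$. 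Concretely, for $p_0=1$, $q_0=1/3$ one gets $\hat N_{(1,1/3)} = \Mart(G)$ exactly: the pushforward undoes the shift. So your $\hat M$ is essentially a positive multiple of $\Mart(G)$, and the claimed equivalence ``$\hat N_{(p_0,q_0)}$ succeeds on $Z$ iff $N_{(p_0,q_0)}$ succeeds on the transformed sequence'' is false. Indeed, $\Mart(G)$ not succeeding on $Z$ tells you only that the slopes of $G$ over \emph{basic dyadic} intervals around $z$ are bounded, which does not imply $\ol D G(z)<\infty$; this gap between dyadic and general intervals is exactly the phenomenon of Figure~\ref{fig:figure2} that Lemma~\ref{lem:rat_intervals} and the scale/shift machinery were introduced to close. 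You cannot invoke the (i)$\to$(ii) direction of Theorem~\ref{thm:MonSuc} to bridge this, because that direction needs the savings property, which $N_{(p_0,q_0)}$ does not have.

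The paper's proof supplies the missing step. It first applies Proposition~\ref{prop:Savings} to $N_q$ to obtain $M_q$ with the savings property; \emph{then} success of $M_q$ on the transformed binary expansion yields $\ul D\,\cdf(M_q)(v_q(z))=\infty$ by (i)$\to$(ii) of Theorem~\ref{thm:MonSuc}. The lower derivative is a statement about slopes on \emph{all} small intervals, not just dyadic ones, so it survives the shift: $h_q(x)=\cdf(M_q)(v_q(x))$ has $\ul D h_q(z)=\infty$. Only at that point does one convert back to a martingale $V=\Mart(r)$ via the (ii)$\to$(i) direction, which does not need the savings property. Note also that the paper restricts $z$ to $[1/3,2/3]$ and fixes the first three bits of $Z$ precisely so that the shift maps $v_q$ are monotone on the relevant domain and $h_q$ is nondecreasing; if you repair your construction you will need an analogous restriction. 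In short: inserting an application of Proposition~\ref{prop:Savings} before the change of variables is not an optional refinement but the essential mechanism that prevents the shift from being cancelled.
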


\begin{proof}  By  Remark~\ref{rm:scaling}  above, from an index for $f $ we can compute an index  for a nondecreasing function $g$ such that $f'(z) \UA $ implies $\ol D g(z) = \infty$ for all $z \in [0,1]$.   Our first goal is to show that  one can   compute an index for a function $h$ such that   $\ul Dh(z) = \infty$ in case $f'(z) \UA $, for each  $z \in [1/3, 2/3)$. The idea is to turn appropriate  martingales into martingales with the savings property, and then apply   the implication (i)$\to$(ii) of  Theorem~\ref{thm:MonSuc}. 

  We use  the simple case of Lemma~\ref{lem:rat_intervals} with the parameters as in the foregoing Remark~\ref{rmk:rats example}. Let $q$ range over rationals of  the form $v/3$, where $v$ is an integer and $|v| \le 3$.  

Let     $ N_{q}$ be   the computable  martingale $N$ obtained in the proof of (i)$\to$
(iii) of Thm.\ \ref{thm:CRdiff} 	above for   $p=1$.   
 By Proposition~\ref{prop:Savings} from an index for  $N_q$ we can compute a  martingale $M_{q}$ with the
 savings property that succeeds on the binary expansion of a real $u \in [0,1]$ if $N$ does.

 Let $x$ range over  $ [1/3, 2/3]$. For $x < 2/3$  let $v_q(x)$ be the fractional part of $x-q$;  let $v_q(2/3) = \lim_{x < 2/3, x \to 2/3} v_q(x)$.    Let $h_q(x) =  \cdf(M_q) (v_q(x))$, and let $h(x) = \sum_q h_q(x)$.    Clearly $h$ is computable from an index for $f$.

Now consider  $z\in [1/3,2/3)$  such that $f'(z)\UA$. Then  $  \ol Dg(z)= \infty$, so for some $q$, $N_q$ succeeds on the binary expansion of $v_q(z)$ as observed in the proof of (i)$\to$(iii) of Thm.\ \ref{thm:CRdiff}.    Hence   $M_q$  succeeds on the binary expansion of $v_q(z)$, which by the   implication (i)$\to$(ii) of  Theorem~\ref{thm:MonSuc} implies that $\ul D h_q(z) = \infty$. Therefore $\ul Dh(z) = \infty$. 

Let $r(x) = h(x) - h(1/3)$, extend this to a computable nondecreasing function on $[0,1]$ by assigning the  value $0$ to $y<1/3$, and the value  $h(2/3)$ to $y>2/3$, and  let $V $ be the computable martingale $\Mart(r)$.  We have $\cdf(V) = r$ by Fact~\ref{fa:correspondence}. By the implication (ii)$\to$(i) of  Theorem~\ref{thm:MonSuc} (which does not rely on the savings property), we may conclude that $V$ succeeds on the binary expansion of $z$.
Note that an index for $V$, viewed as a function from binary strings to Cauchy names for reals, is computable  from an index for $g$, and hence from an index  for $f$.

	It remains to compute, from an index for  $V$, the binary expansion $Z$ of  a real  $z\in [1/3,2/3)$ such that $V(Z\uhr n)$ is bounded. Let the first 3 bits of $Z$ be $1,0,0$. For $n\ge 3$, if $\sss = Z\uhr n$ has been determined, use $V$ to determine a bit $Z(n)= b$ such that $V(\sss \ape b)\le V(\sss \ape  (1-b))+\tp{-n}$. Clearly, $\sup_n V(Z\uhr n)< \infty$. 
 \end{proof} 
%Note that even if $f$ is computable, the uniformity in the previous result is only on indices. From each index for $f$ we can uniformly compute (the binary expansion of) a real $z$ such that $f'(z)\da$, but 
%
Note that in the argument  above, different indices for $f$ might result in different reals.
%\subsection{Preservation of  computable randomness.} 
Next  we obtain a preservation result for computable randomness.  For instance,  computable  randomness is preserved under the map    $z \mapsto e^z$, and,   for each computable  real $\aaa \neq  0 $, under the map $z \mapsto  z^\aaa$.
\begin{cor} 
\label{cor:diff-bij}  Suppose $z \in \RR$ is computably random. Let $H$ be a computable function that is   1-1 in a neighborhood of $z$. If~$H'(z) \neq 0$, then $H(z)$ is computably random.
 \end{cor}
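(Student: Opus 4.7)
\medskip

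\noindent\emph{Proof proposal.} The plan is contraposition using Theorem~\ref{thm:CRdiff}. Assume $H(z)$ is not computably random and write $w=H(z)-\lfloor H(z)\rfloor$. The edge case $w=0$ (i.e., $H(z)\in\ZZ$) is immediate: then $H(z)$ is computable, so $z=H^{-1}(H(z))$ is computable (since $H^{-1}$ is computable on $H([a,b])$ for any compact $1$--$1$ neighborhood $[a,b]\ni z$), and computable reals are never computably random. So suppose $w\in(0,1)$. Theorem~\ref{thm:CRdiff} applied to $w$ furnishes a computable nondecreasing $g\colon[0,1]\to\RR$ with $\ol D g(w)=\infty$. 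The next step is to turn this $g$ into a computable nondecreasing $f\colon[0,1]\to\RR$ with $\ol D f(z_0)=\infty$ at the fractional part $z_0$ of $z$; a second application of Theorem~\ref{thm:CRdiff} then contradicts the assumed computable randomness of~$z$.

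Without loss of generality $z=z_0\in(0,1)$ and $H'(z)>0$ (the case $H'(z)<0$ is handled by replacing $g$ with the nondecreasing computable $\tilde g(y)=g(1)-g(1-y)$, which satisfies $\ol D\tilde g(1-w)=\infty$, and using $n+1-H(x)$ in place of $H(x)-n$ below). Fix $n=\lfloor H(z)\rfloor$ and pick rationals $a<z<b$ in $(0,1)$ small enough that $H$ is strictly increasing on $[a,b]$ and $H([a,b])-n\subset(0,1)$; this is possible by continuity of $H$ together with $w\in(0,1)$, and $w$ then lies automatically in the interior of $H([a,b])-n$. Define $f(x)=g(H(x)-n)$ on $[a,b]$ and extend $f$ constantly on $[0,a]$ and $[b,1]$. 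Being a composition of nondecreasing functions followed by constant extension, $f$ is continuous and nondecreasing, and uniformly computable on the rationals, so by Proposition~\ref{prop:monotonic computable}, $f$ is a computable function on $[0,1]$. Note that the integer~$n$ is merely a fixed parameter used to specify~$f$; it need not be effectively obtained from~$H$.

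The remaining step, verifying $\ol D f(z)=\infty$, is a one-point chain-rule estimate. Pick $h_k\to 0$ with $(g(w+h_k)-g(w))/h_k\to\infty$; for all large $k$, $w+h_k$ lies in $H([a,b])-n$, so by continuity and strict monotonicity of $H$ we may choose $x_k\in[a,b]$ with $H(x_k)=H(z)+h_k$. Continuity of $H^{-1}$ on the compact range $H([a,b])$ gives $x_k\to z$, and differentiability of $H$ at~$z$ with $H'(z)>0$ yields $h_k/(x_k-z)\to H'(z)$. Factoring
\[
\frac{f(x_k)-f(z)}{x_k-z}\;=\;\frac{g(w+h_k)-g(w)}{h_k}\cdot\frac{h_k}{x_k-z}
\]
then produces $\ol D f(z)=+\infty$, as required. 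The only awkwardness is bookkeeping: the case split on the sign of $H'(z)$ and the integer shift~$n$. The analytic content is a single-point chain rule for the upper derivative, using that differentiability of $H$ at $z$ with a nonzero derivative is enough to transfer $\overline{D}g(w)=\infty$ through~$H$.
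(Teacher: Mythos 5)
Your proposal is correct and is, at bottom, the same argument as the paper's: the analytic content is the (inverse) chain rule, transferring differentiability (or non-differentiability) through $H$ at the one point $z$ using $H'(z)\neq 0$. The difference is presentational: the paper argues directly (for every computable nondecreasing $f$ near $H(z)$, the composition $f\circ H$ is computable nondecreasing near $z$, so $(f\circ H)'(z)$ exists since $z$ is computably random, and then $H'(z)\neq 0$ gives $f'(H(z))$), whereas you argue by contraposition, pulling back a specific witness $g$ with $\ol D g(w)=\infty$ to a witness $f=g\circ(H-n)$ with $\ol D f(z)=\infty$. Your version is more explicit about the bookkeeping (the integer shift $n$, the rational endpoints $a,b$, the constant extension, the $w=0$ edge case, and the $H'(z)<0$ reflection), and it replaces an implicit appeal to the inverse-function theorem for derivatives by the elementary estimate $h_k/(x_k-z)\to H'(z)$. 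None of this extra care is wasted — the paper's two-sentence ``$(f\circ H)'(z)$ exists, hence $f'(H(z))$ exists'' compresses exactly the steps you spell out — but the two proofs are not different routes, just different levels of detail in the same route.
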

 
\begin{proof} Note that $H'(z)$ exists by Theorem~\ref{thm:CRdiff}.  First suppose $H$ is increasing  in a neighborhood of $z$.  If a function $f$   is computable and nondecreasing in a neighborhood of $H(z)$, then 
the composition $f\circ H$  is nondecreasing  in a neighborhood of $z$. Thus, since $z$ is computably random, $(f\circ H)'(z)$ exists. Since $H'(z) \neq  0$,
%and~$H$ is continuous and  1-1 in a neighborhood of $z$,
this implies that $f'(H(z))$ exists. Hence $H(z)$ is computably random. 

If $H$ is decreasing, we  apply the foregoing argument to $-H$ instead.
\end{proof}
%%%%%%%%%%%%%% 
%\subsection{Differentiability of computable Lipschitz functions} \label{ss:other applications 4.1}
 
 \begin{cor} \label{cor:Lipschitz} If a real $z\in [0,1]$ is computably random, then each computable Lipschitz function $h$  on the unit interval is differentiable at $z$. 
	\end{cor}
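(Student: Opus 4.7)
The plan is to reduce the statement to Theorem~\ref{thm:CRdiff} by writing $h$ as a difference of two computable nondecreasing functions. Concretely, let $L$ be any (rational) Lipschitz constant for $h$ on $[0,1]$; such an $L$ exists by the assumption that $h$ is Lipschitz, and note that for the purposes of this argument we need only its \emph{existence}, not an effective way to compute it from an index for $h$.

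With $L$ in hand, define $f(x) = h(x) + Lx$ and $g(x) = Lx$. Both functions are computable on $[0,1]$ since $h$ is computable and addition by the computable function $Lx$ preserves computability. The function $g$ is clearly nondecreasing, and $f$ is nondecreasing because for $x < y$ the Lipschitz condition gives $h(y) - h(x) \ge -L(y-x)$, whence $f(y) - f(x) = h(y) - h(x) + L(y-x) \ge 0$.

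Since $z$ is computably random, Theorem~\ref{thm:CRdiff} applied to each of the computable nondecreasing functions $f$ and $g$ yields that both $f'(z)$ and $g'(z)$ exist (and are finite). Therefore $h'(z) = f'(z) - g'(z)$ exists as well, which is the desired conclusion. There is no real obstacle here; the only mild subtlety worth flagging is that although the Lipschitz constant need not be recoverable from an index for $h$, its mere existence suffices to carry out the decomposition.
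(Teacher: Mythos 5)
Your proof is correct and takes essentially the same approach as the paper: both reduce to Theorem~\ref{thm:CRdiff} by adding a linear function $Lx$ to make a computable nondecreasing function, the paper writing $f(x)=Cx-h(x)$ and applying the theorem once, while you write $h=f-g$ with $f=h+Lx$, $g=Lx$ and apply it to both. The invocation of the theorem for $g=Lx$ is superfluous (a linear function is trivially differentiable), but this is only a cosmetic difference.
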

	\begin{proof} Suppose $h$  is Lipschitz via a constant $C \in \NN$. Then the function~$f$ given by $f(x)= Cx- h(x)$ is  computable and  nondecreasing. 
	Thus, 	 by (i)$\to$(ii) of  Theorem~\ref{thm:CRdiff}, $f$ and hence $h$ is differentiable at $z$.
	\end{proof}
The converse of Cor.\ \ref{cor:Lipschitz} has been shown in \cite{Freer.Kjos.ea:nd}.  Thus, monotonicity can be replaced by being Lipschitz in Theorem~\ref{thm:CRdiff}.
Note that the function~$f$ in the foregoing proof is  Lipschitz. Thus, computable randomness is characterized by differentiability of computable functions that are monotonic, or Lipschitz, or both monotonic and Lipschitz.  This accounts for some arrows in  Figure~\ref{fig:diagram} in the introduction.

%\begin{cor} Each computable nondecreasing function $f$  is differentiable at a computable real $z$. \end{cor}
%
%\begin{proof}  By Remark~\ref{rm:scaling}  above,  there is  a nondecreasing computable  function $g$ such that $f'(z) \UA$ implies $\ol Dg(z) = \infty$.     In the proof of (i)$\to$(iii) of Thm.\ \ref{thm:CRdiff}   we determined a finite set $L$ of rationals and computable martingales $N_{p,q}$ for $p,q\in L$, $p > 0$, with the following property: if    $\ol Dg(z) = \infty$ then for some $p, q$, the martingale $N=N_{p,q}$ succeeds on the binary expansion of $(z-q)/p$.   We compute the binary expansion $W$  of a real $w$ such that  $N(W\uhr k)$ is bounded.   If $\sss = W\uhr k$ has been determined, find  a bit $W(k)= b$ such that $N(\sss \ape b)\le N(\sss \ape  (1-b))+\tp{-k}$. Clearly, $\sup_k N(W\uhr k)< \infty$.  
%
%Now $z = pw +q$ is a computable  real as required. 
%\end{proof} 
% 
% 

\subsection{The Denjoy alternative for computable functions}   \label{ss:DAlt}

Several theorems in classical  real analysis say that a certain function is well-behaved  almost everywhere. Being well-behaved can mean other things than being differentiable, although it is usually closely related. In the next two subsections we give two examples of such results and discuss their effective versions.    As before, $\leb$ denotes the usual Lebesgue measure on the unit interval.

The first result applies to arbitrary functions on the unit interval.   For  a somewhat  more general Theorem, see  \cite[Thm. 5.8.12]{Bogachev.vol1:07} or~\cite[7:9.5]{Bruckner:78}.

\begin{thm}[Denjoy, Saks, Young] \label{thm:Denjoy} Let $f$ be a   function  $[0,1]\ria \R$. Then   $\leb$-almost surely, the \emph{Denjoy alternative} holds at $z$:

\bc either $f'(z)$ exists, or   $\ol Df(z) =  \infty$ and $ \ul Df(z) = -\infty$.   \ec
\end{thm}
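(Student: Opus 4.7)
I would approach the Denjoy--Saks--Young theorem via the four Dini derivatives $D^+ f$, $D_+ f$, $D^- f$, $D_- f$, using the identifications $\ol D f(z) = \max(D^+ f(z), D^- f(z))$ and $\ul D f(z) = \min(D_+ f(z), D_- f(z))$, together with the fact that $f'(z)$ exists (finitely) iff all four Dini derivatives coincide in a common finite value. In these terms the Denjoy alternative at $z$ fails exactly when some Dini derivative of $f$ at $z$ is finite yet the four do not all collapse to a common finite value. The aim is therefore to show that at a.e.\ $z$, as soon as one Dini derivative is finite, all four agree.

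\textbf{Key lemma.} The heart of the argument is: for each pair of rationals $p < q$, the set
\[ E_{p,q} = \{z \in [0,1] : D^+ f(z) < p \text{ and } D_- f(z) > q\} \]
has outer Lebesgue measure zero, together with the three symmetric variants obtained by swapping upper with lower and left with right. I would prove this by contradiction: assume $\lambda^*(E_{p,q}) > 0$. Each $z \in E_{p,q}$ admits arbitrarily small right-witness intervals $[z, z+h]$ with $f(z+h) - f(z) < ph$ and arbitrarily small left-witness intervals $[z-k, z]$ with $f(z) - f(z-k) > qk$. Applying Vitali's covering theorem (in the form valid for arbitrary subsets of $\R$) to the family of left-witness intervals, one extracts a disjoint subfamily covering all but a small fraction of $E_{p,q}$; the total ``drop'' across this family exceeds $q(1-\epsilon)\lambda^*(E_{p,q})$. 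Inside each selected left interval one then finds a Vitali subcover of right-witness intervals whose total incremental $f$-change is at most $p$ times their total length. A chaining argument over these nested covers estimates the same net variation by at most $p \cdot \lambda^*(E_{p,q})(1+\epsilon)$, contradicting $p < q$.

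\textbf{Assembly.} Summing the four symmetric orientations of the key lemma over all rational pairs $p < q$, one obtains at a.e.\ $z$: each finite upper Dini dominates the opposite-side lower Dini, and each finite lower Dini is dominated by the opposite-side upper Dini. Combined with the trivial $D^\pm f \ge D_\pm f$, these inequalities force that at a.e.\ $z$, if any one Dini derivative at $z$ is finite then all four agree and $f'(z)$ exists. At a.e.\ $z$ where no Dini derivative is finite, one must still rule out the degenerate scenario in which all four agree on a common infinite value, so that $\ol D f(z)$ and $\ul D f(z)$ are both $+\infty$, or both $-\infty$. This is handled by a further Vitali estimate: the set $\{z : D_+ f(z) = +\infty\}$ would produce, via the same disjoint-family accounting against any bound on $f$ on a small interval, an unbounded total increase over a bounded region, which is impossible. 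Excluding these null sets leaves, at a.e.\ $z$: either $f'(z)$ exists, or $\ol D f(z) = +\infty$ and $\ul D f(z) = -\infty$.

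\textbf{Main obstacle.} The technical core is the Vitali chaining in the key lemma: one must coordinate the disjoint left-witness Vitali family with the nested right-witness refinements so that the two estimates of the same net variation are made on matching collections of intervals, and the positive gap $q - p$ survives quantitatively in the final bound. Since $f$ is not assumed measurable, one should work with outer measure throughout and with the version of Vitali's covering theorem valid for arbitrary sets; care is also needed that no measurable-hull or density argument inadvertently requires $f$ itself to be measurable.
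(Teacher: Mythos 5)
The paper does not prove this theorem; it cites it as classical background with references to Bogachev and Bruckner, so there is no in-paper argument to compare against. Your plan follows the standard Dini-derivative route, and you are right to flag the chaining as the difficulty, but as written the argument has genuine gaps. Two of your ``four symmetric variants'' of $E_{p,q}$ are simply false: the upper/lower swap yields $\{z : D_+ f(z) < p < q < D^- f(z)\}$ and the double swap yields $\{z : D_- f(z) < p < q < D^+ f(z)\}$, and both can have full measure --- any function that is a.e.\ in the ``fourth'' Denjoy--Young--Saks case, with $D_\pm f \equiv -\infty$ and $D^\pm f \equiv +\infty$, lies in them. Only your original set and its left/right reflection $\{z : D^- f(z) < p < q < D_+ f(z)\}$ are null; that is exactly Young's theorem, and these two sets are in fact \emph{countable}, proved by a direct counting argument without any Vitali covering. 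But those two facts give only the crossed inequalities $D^+ f \ge D_- f$ and $D^- f \ge D_+ f$ a.e., which are perfectly compatible with, say, $D^+ f = D^- f = 5$ and $D_+ f = D_- f = 3$ at a point where $f'$ fails to exist although $\ol D f$ and $\ul D f$ are both finite. So your assembly step does not follow.

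The genuinely measure-theoretic lemma has a \emph{finiteness cap} that your $E_{p,q}$ lacks: one needs, e.g., that $\{z : D_- f(z) < p < q < D^+ f(z) < M\}$ is null for all rationals $p<q<M$, and similarly on the other side. The two-sided slope bound $p < \text{slope} < M$ is what makes a Vitali argument close. Without it your chaining cannot be completed: inside a selected outer interval the inner disjoint intervals leave gaps, and the $f$-variation over those gaps is completely uncontrolled when $f$ is not assumed monotone (nor even measurable), so the two sums you form are \emph{not} estimates ``of the same net variation.'' The classical proofs avoid this either by combining Young's counting lemma with a decomposition of $\{D^+ f < M\}$ into countably many pieces on which $f$ is Lipschitz (reducing to Lebesgue's theorem for monotone functions), or by a Vitali argument that crucially uses the cap $M$ to control the contribution of the gaps. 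Alternating two unrelated Vitali families and hoping the variations match is exactly the step that fails.
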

 
%Also Thomson, Real Functions, LNM 1170.  i.e.,   $g \in  \+L^1([0,1], \leb)$

The Denjoy alternative for effective functions was first studied by Demuth (see \cite{Kucera.Nies:12}). The following result is a combination of work  by Demuth,  Miller, Nies, and \Kuc{}. In contrast to  Theorem~\ref{thm:CRdiff}, it characterizes computable randomness in terms of  a differentiability property of computable functions, without any additional conditions on the function.
\begin{theorem}  \label{thm:DenjoyCR}  Let  $z \in [0,1]$. Then
   $z$ is computably random $\LR$ 
   
\hfill for every computable $f \colon \, [0,1] \to \mathbb R$ the Denjoy alternative holds at~$z$.      \end{theorem}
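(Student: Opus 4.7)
The direction $(\Leftarrow)$ is a direct consequence of Theorem~\ref{thm:CRdiff}. Argue by contrapositive: if $z$ is not computably random, then the proof of the $(iv)\to(i)$ direction of Theorem~\ref{thm:CRdiff} produces a computable nondecreasing $g$ with $\overline D g(z)=\infty$. Since $g$ is nondecreasing, all Dini derivatives of $g$ at $z$ are $\ge 0$, so $\underline D g(z)\ne -\infty$, and the Denjoy alternative fails for the computable function $g$ at $z$.

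For $(\Rightarrow)$, suppose $z$ is computably random and $f\colon[0,1]\to\R$ is computable. Replacing $f$ by $-f$ if needed, it suffices to rule out the case $\overline D f(z)<\infty$ with $\underline D f(z)<\overline D f(z)$. The plan is to construct (uniformly from $f$ and rational parameters $N,M$ with $\underline D f(z)<N<\overline D f(z)<M$) a computable nondecreasing function $g\colon[0,1]\to\R$ that is not differentiable at $z$. Theorem~\ref{thm:CRdiff} then contradicts the computable randomness of $z$.

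The construction of $g$ should be an effective analogue of the Vitali-covering argument in the classical Denjoy--Saks--Young theorem. Intuitively, $g$ must accumulate, on any short interval near $z$, an amount reflecting the $f$-oscillation strictly between slopes $N$ and $M$; the hypothesis $\underline D f(z)<N<\overline D f(z)<M$ supplies arbitrarily short intervals at $z$ over which this oscillation is large, so that $g$ inherits arbitrarily large slopes around $z$. A natural starting point is a positive-variation functional tracking the aggregate $M$-excess of $f$, possibly combined with a parallel negative-variation functional tracking $N$-deficit. Unlike the proof of Theorem~\ref{thm:CRdiff}, which exploits the given monotonicity of its input function, here $f$ is arbitrary, so monotonicity must be manufactured.

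The main obstacle is that a naive $M$-variation functional need not be computable (or even finite) when $f$ is not of bounded variation, whereas a too aggressive truncation may obliterate the oscillation information at $z$. The subtle balance required---an effectively computable monotone $g$ that nonetheless inherits the local oscillation of $f$ at $z$, so that $\overline D g(z)=\infty$---is the technical heart of the result and reflects the layered contributions of Demuth, \Kuc, Miller and Nies indicated in the paper. Once such a $g$ is produced, Theorem~\ref{thm:CRdiff} forces $g'(z)$ to exist, the desired contradiction.
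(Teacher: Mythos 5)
Your $(\Leftarrow)$ direction is correct and is essentially the contrapositive of the paper's argument, which runs directly: if the Denjoy alternative holds at $z$ for all computable $f$, then in particular any nondecreasing computable $f$ satisfies it; since $\ul Df(z)\ge 0\ne-\infty$, the alternative forces $f'(z)$ to exist, so (ii) of Theorem~\ref{thm:CRdiff} holds and $z$ is computably random. Your reversal of this reasoning (not computably random $\Rightarrow$ a computable nondecreasing $g$ with $\ul Dg(z)=\infty$ from the proof of (iv)$\to$(i), which then violates the Denjoy alternative because $\ul Dg(z)\ge 0$) is sound. One small slip: the construction in (iv)$\to$(i) actually gives $\ul Dg(z)=\infty$, not merely $\ol Dg(z)=\infty$, but this only strengthens your conclusion.

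The $(\Rightarrow)$ direction is where there is a genuine gap. Your reduction---replace $f$ by $-f$ if necessary so that one may assume $\ol Df(z)<\infty$ and $\ul Df(z)<\ol Df(z)$, then fix rational parameters $N,M$ with $\ul Df(z)<N<\ol Df(z)<M$---is a correct normalization. But after that you offer only a plan, not a construction: you write that $g$ ``should be'' an effective analogue of the Vitali argument, that it ``must accumulate'' the $M$-excess, and that the needed ``subtle balance'' is ``the technical heart of the result.'' You yourself identify the central obstacle (the naive positive-variation functional need not be computable or finite for an arbitrary computable $f$) without resolving it, so what you have is a statement of the problem, not a proof. Note that the paper does not prove this direction in-text either: it explicitly defers to Bienvenu, H\"olzl, Miller, and Nies \cite{Bienvenu.Hoelzl.ea:12}, where the argument is in fact quite different from the Vitali-style variation bookkeeping you propose. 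So your $(\Leftarrow)$ is complete; your $(\Rightarrow)$ is not, and it would need the actual construction from \cite{Bienvenu.Hoelzl.ea:12} (or a new one) to be a proof.
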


\begin{proof} For the implication ``$\LA$'', let  $f$ be a nondecreasing computable function. Since  $f$ satisfies the Denjoy alternative at $z$ and  $\ul Df(z) \ge 0$, this means that $f'(z)$ exists.
Thus   $z$ is computably random by  Theorem~\ref{thm:CRdiff}.

For a proof of  the implication ``$\RA$'' see \cite{Bienvenu.Hoelzl.ea:12}.
\end{proof} 

\subsection{The Lebesgue differentiation theorem and  $\+ L_1$-computable functions }

  \label{ss:ClassAn}

	The following   result is usually called the  Lebesgue differentiation theorem. For a proof,  see  \cite[Section 5.4]{Bogachev.vol1:07}.
	\begin{thm} \label{thm:LDiffT}  Let $g \colon \, [0,1]\ria \R$  be integrable. Then   $\leb$-almost surely, 

	\[g(z) =  \lim_{r \ria 0}\frac 1 {r}  \int_{z}^{z+r} g  \,  d\leb. \]
	\end{thm}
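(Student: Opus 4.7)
The plan is to follow the standard classical proof via the Hardy--Littlewood maximal function, which is the well-known route to this theorem. Since the result is already a statement over $[0,1]$ about an integrable function, I would extend $g$ by zero to the real line (or work with $r$ small enough that $z+r \in [0,1]$) and define the one-sided maximal operator
\[ Mg(z) = \sup_{0 < |r| \le 1}\, \frac{1}{|r|}\int_{z}^{z+r} |g|\, d\leb. \]
The central estimate, and the step I expect to be the main obstacle, is the \emph{weak-type $(1,1)$ inequality}: there is an absolute constant $C$ such that $\leb\{z : Mg(z) > \alpha\} \le C\|g\|_1/\alpha$ for every $\alpha > 0$. In one dimension this is proved by a Vitali-type covering argument: at each $z$ in the super-level set, pick an interval $I_z$ with one endpoint at $z$ whose $|g|$-average exceeds $\alpha$; from the overlapping collection $\{I_z\}$ extract, by a rising-sun style selection, a pairwise disjoint sub-collection whose total length controls $\leb\{Mg > \alpha\}$ up to a fixed constant, and then use $\sum |I_j| \le \alpha^{-1}\sum \int_{I_j}|g| \le \alpha^{-1}\|g\|_1$.

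Second, I would exploit density of continuous functions in $L^1[0,1]$. Fix $\varepsilon > 0$ and choose continuous $h$ with $\|g - h\|_1 < \varepsilon^2$. For the continuous function $h$ the conclusion is immediate at every $z$: uniform continuity yields $(1/r)\int_{z}^{z+r} h\, d\leb \to h(z)$ as $r \to 0$. Writing $g = h + (g - h)$, the error
\[ \Delta_r(z) \;=\; \frac{1}{r}\int_{z}^{z+r} g\, d\leb \;-\; g(z) \]
splits as a piece going to zero (the one for $h$) plus a piece bounded by $M(g-h)(z) + |g(z)-h(z)|$. Setting
\[ E_\varepsilon = \bigl\{ z : \limsup_{r \to 0} |\Delta_r(z)| > 2\varepsilon \bigr\}, \]
we obtain $E_\varepsilon \subseteq \{M(g-h) > \varepsilon\} \cup \{|g - h| > \varepsilon\}$. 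The maximal inequality bounds the first set by $C\varepsilon^2/\varepsilon = C\varepsilon$, and Chebyshev bounds the second by $\varepsilon^2/\varepsilon = \varepsilon$. Hence $\leb(E_\varepsilon) \le (C+1)\varepsilon$, and since $\varepsilon$ is arbitrary, the exceptional set $\bigcup_n E_{1/n}$ is null.

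In summary, the structure is: maximal inequality $+$ density of a nice subclass on which the conclusion is trivially pointwise $+$ standard $\varepsilon/\delta$ decomposition. The difficulty is concentrated entirely in the covering lemma required to prove the weak-type $(1,1)$ bound; everything else is formal. One attractive alternative in the monotone setting already developed in the paper would be to invoke the results of Section~\ref{s:CRandDiff}: applied to the increasing function $G(x) = \int_0^x g\, d\leb$ (when $g \ge 0$), almost-everywhere differentiability of $G$ yields the conclusion, and the general signed case reduces to this by writing $g = g^+ - g^-$. This is not, however, as self-contained as the maximal-function proof and only gives the pointwise density theorem through a detour.
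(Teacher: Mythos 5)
The paper does not prove this theorem at all: it is stated as a classical result and a reference to Bogachev (Section 5.4) is given. So there is no ``paper's proof'' to compare against; what you have supplied is a self-contained classical argument, and it is the standard one. Your main route---Hardy--Littlewood maximal operator, weak-type $(1,1)$ bound via a Vitali/rising-sun covering argument, density of continuous functions in $\mathcal L_1$, and the $\{M(g-h)>\varepsilon\}\cup\{|g-h|>\varepsilon\}$ decomposition---is exactly how Bogachev and most texts prove it. The quantitative bookkeeping is sound: $E_\varepsilon$ does not depend on the approximant $h$, so letting $\|g-h\|_1\to 0$ (or, as you do, using the $\varepsilon^2$ device together with monotonicity of $\varepsilon\mapsto E_\varepsilon$) forces $\leb(E_\varepsilon)=0$ for every $\varepsilon$, and the exceptional set is the countable union $\bigcup_n E_{1/n}$. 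One purely notational slip: with the sign conventions for oriented integrals, $\frac{1}{|r|}\int_z^{z+r}|g|$ is negative for $r<0$, so the maximal function should be written as $\sup_{0<|r|\le 1}\frac{1}{r}\int_z^{z+r}|g|\,d\leb$ (equivalently, the sup of averages over intervals $[z,z+r]$ and $[z+r,z]$); this does not affect the argument.

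Your alternative suggestion at the end, however, does not work as stated and should be dropped or reworked. Almost-everywhere differentiability of $G(x)=\int_0^x g\,d\leb$ (whether obtained from the classical Lebesgue monotone-differentiation theorem or from the effective machinery of Section~\ref{s:CRandDiff}) gives only that $G'(z)$ \emph{exists} for a.e.\ $z$; it does not by itself identify $G'(z)$ with $g(z)$. That identification is precisely the nontrivial content of the Lebesgue differentiation theorem (equivalently, the fundamental theorem of calculus for absolutely continuous functions), so the ``detour'' is circular rather than merely indirect. The effective results in Section~\ref{s:CRandDiff} are about \emph{computable} nondecreasing functions and computably random reals, and the paper's own Corollary~\ref{cor:L1-comp} likewise only asserts existence of $G'(z)$, not its value. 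Stick with the maximal-function proof; it is complete and correct.
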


Before we discuss an effective version of this, we need some basics on $\+ L_1$-computable functions in the sense of  \cite[p.\ 84]{Pour-El.Richards:89}.  Recall that $\+ L_1([a,b])$   denotes the set  of integrable functions $g: [a,b]\ria \R$, and   $||g||_1 = \int_{[a,b]} |g| d\leb$. We say that  $g: [0,1]\ria \R$ is \emph{$\+ L_1$-computable} if there is a uniformly computable sequence $(h_n)\sN n$ of    functions  on $[0,1]$ such that $||g- h_n ||_1 \le \tp{-n}$ for each~$n$.  (The notion of computability for the $h_n$ is the usual one in the sense of Subsection~\ref{ss:compfunctions}; in particular, they are continuous.)

For a function $h$, we  let $h^+ = \max (h,0) $ and $h^- = \max (-h, 0)$. 
 If $g$ is $\+ L_1$-computable via $(h_n)\sN n$, then  $g^+$ is $\+L_1$-computable via $(h_n^+)\sN n$, and $g^-$ is $\+L_1$-computable via $(h_n^-)\sN n$. (This follows because $|g^+(x)- h_n^+(x)| \le |g(x) - h_n(x)|$, etc.)

If  $g \in \+L_1([0,1])$ then its restriction  to the interval $[0,x]$ is in $ \+L_1([0,x])$.  Let  
 $G(x) = \int_0^x g d\leb$.  We have $G= G^+-G^-$ where   $G^+(x) = \int_0^x g^+  d\leb$ and $G^-(x) = \int_0^x g ^-d\leb$. 
\begin{fact} \label{fa:parts} If $g$ is $\+L_1$-computable then $G^+$ and $G^-$ are computable. \end{fact}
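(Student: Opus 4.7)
The plan is to combine the $\+L_1$-approximation of $g^+$ by the computable functions $h_n^+$ with Proposition~\ref{prop:monotonic computable}. By symmetry ($g^- = (-g)^+$), it suffices to handle $G^+$; the argument for $G^-$ is identical.

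First I would fix a witness $(h_n)\sN n$ to the $\+L_1$-computability of $g$, so that $||g-h_n||_1 \le \tp{-n}$. As the excerpt already observes, the sequence $(h_n^+)\sN n$ witnesses that $g^+$ is $\+L_1$-computable, using the pointwise 1-Lipschitz estimate $|g^+(x)-h_n^+(x)| \le |g(x)-h_n(x)|$; integrating gives $||g^+ - h_n^+||_1 \le \tp{-n}$. Note that each $h_n^+ = \max(h_n,0)$ is a computable function on $[0,1]$ uniformly in $n$, since the maximum of two computable functions is computable.

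Next I would define the approximants $H_n(x) = \int_0^x h_n^+ \, d\leb$. Because $h_n^+$ is a (continuous) computable function, the standard fact from computable analysis (see \cite[Ch.\ 1, Thm.\ 2]{Pour-El.Richards:89}) guarantees that $H_n$ is a computable function of $x$, uniformly in $n$; in particular $H_n(q)$ is a computable real uniformly in $n$ and in a rational $q \in [0,1]$. The key $\+L_1$-estimate gives
\[
|G^+(x) - H_n(x)| \;=\; \left| \int_0^x (g^+ - h_n^+) \, d\leb \right| \;\le\; ||g^+ - h_n^+||_1 \;\le\; \tp{-n}
\]
for every $x \in [0,1]$. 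Hence $G^+(q)$ is a computable real uniformly in a rational $q$, via the Cauchy name $(H_n(q))\sN n$.

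Finally I would invoke Proposition~\ref{prop:monotonic computable}. Since $g^+ \ge 0$, the function $G^+$ is nondecreasing, and since it is the indefinite Lebesgue integral of an integrable function it is absolutely continuous, hence continuous. Proposition~\ref{prop:monotonic computable} then upgrades the uniform computability of $G^+$ on rationals to computability of $G^+$ as a function on $[0,1]$. The same argument applied to $(-g)$, or equivalently to $(h_n^-)\sN n$, yields the computability of $G^-$. The only substantive point that could cause trouble is the uniform computability of $H_n$ from an index for $h_n$; I would handle this either by citing Pour-El--Richards directly or, if a self-contained argument is preferred, by noting that since $h_n$ is effectively uniformly continuous, rational Riemann sums of $h_n^+$ with effectively chosen mesh approximate $H_n(q)$ within any prescribed $\tp{-k}$.
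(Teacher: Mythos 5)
Your proof is correct and follows essentially the same route as the paper: establish that $G^+(q)$ (resp.\ $G^-(q)$) is a computable real uniformly in a rational $q$, then apply Proposition~\ref{prop:monotonic computable} using that $G^{\pm}$ is nondecreasing and continuous. The only difference is that where the paper cites \cite[Lemma 2.3]{Pathak:09} for the uniform computability of $\int_0^q f\,d\leb$ for $\+L_1$-computable $f$, you prove that step directly via the approximants $H_n(q)=\int_0^q h_n^+\,d\leb$ and the estimate $|G^+(q)-H_n(q)|\le \|g^+-h_n^+\|_1\le \tp{-n}$, which is a perfectly valid (and self-contained) substitute.
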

\begin{proof}   For each $\+L_1$-computable function  $f$, $\int_0^q f d\leb$ is computable uniformly in a rational $q$ by \cite[Lemma 2.3]{Pathak:09}. Thus the nondecreasing continuous functions $G^+, G^-$ are computable  by Proposition~\ref{prop:monotonic computable}. \end{proof}

By Theorem~\ref{thm:LDiffT},  if  $g$ is in $\+L_1([0,1])$ and $G$ is as above,  then   for $\leb$-almost every~$z$, $G'(z)$ exists and equals $g(z)$. 
For  the mere existence of $G'(z)$, we have the following.

\begin{cor} \label{cor:L1-comp}  Let $g$ be $\+L_1$-computable. Then $G'(z)$ exists for each computably random real $z$. \end{cor}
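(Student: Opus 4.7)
The plan is to decompose $G$ as a difference of two nondecreasing computable functions and then invoke Theorem~\ref{thm:CRdiff} twice. Concretely, I would write $G = G^+ - G^-$ as set up just before the statement, where $G^+(x) = \int_0^x g^+\, d\leb$ and $G^-(x) = \int_0^x g^-\, d\leb$.

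First, observe that since $g^+, g^- \ge 0$, the antiderivatives $G^+$ and $G^-$ are both nondecreasing functions $[0,1]\to\R$. Second, by Fact~\ref{fa:parts}, both $G^+$ and $G^-$ are computable as real-valued functions on $[0,1]$ (the fact was stated precisely so that this step would be immediate). Hence each of $G^+$ and $G^-$ is a computable nondecreasing function in the sense of Theorem~\ref{thm:CRdiff}.

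Now suppose $z\in[0,1]$ is computably random. The implication (i)$\to$(ii) of Theorem~\ref{thm:CRdiff} applied to $G^+$ yields that $(G^+)'(z)$ exists, and applied to $G^-$ yields that $(G^-)'(z)$ exists. Since differentiability is preserved under finite linear combinations, it follows that $G'(z) = (G^+)'(z) - (G^-)'(z)$ exists, completing the proof.

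There is no real obstacle here: the work has been front-loaded into Fact~\ref{fa:parts} (which packages the computability of the indefinite integrals of $g^+$ and $g^-$) and into Theorem~\ref{thm:CRdiff} (which supplies differentiability at computably random points for each monotone computable function). The corollary is essentially an effective Jordan-style decomposition combined with the main theorem. Note that this argument says nothing about the value of $G'(z)$; obtaining $G'(z) = g(z)$ would require an effective version of the Lebesgue differentiation theorem itself, which, as the surrounding discussion indicates, corresponds to Schnorr randomness rather than computable randomness and lies outside the scope of this corollary.
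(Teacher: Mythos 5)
Your proof is correct and is exactly the paper's argument: decompose $G=G^{+}-G^{-}$, use Fact~\ref{fa:parts} for computability of the two nondecreasing pieces, and apply Theorem~\ref{thm:CRdiff} to each. Your closing remark that the corollary asserts only existence of $G'(z)$, not the value $g(z)$, is also consistent with the paper's discussion.
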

\begin{proof} It suffices to note that by  Fact~\ref{fa:parts} and Theorem~\ref{thm:CRdiff},  $(G^+)'(z)$ and $(G^-)'(z)$ exist.
\end{proof}

The condition in Cor.~\ref{cor:L1-comp} that $G'(z)$ exists for each $\+ L_1$-computable function~$g$  actually  characterizes Schnorr randomness by recent  results of Rute~\cite{Rute:12}, and Pathak, Rojas and Simpson already  mentioned in the introduction.

%\Com{Suppose $g$ is $ \+L_1$-computable via $(h_n)\sN n$.  Pathak  \cite{Pathak:09} constructed from  the  sequence $(h_n)\sN n$ a canonical representative $  g$, i.e., $||g-   g||_1 =0$. She shows that $  g(z)= \lim_n h_n(z)$ exists for each \ML\ random $z$. If $z$ is not \ML\ random then $   g(z) $ is defined to be $0$. The main result in \cite{Pathak:09},   in the special case of dimension one,  states that  $  G'(z) =   g(z)$ for each \ML\ random real $z$. }
%
%
%
%\Com{The formulation of Lebesgue's result in \cite{Pathak:09} is that  for an $\+L_1$ function $g$, we have   $\leb$-almost surely
%
%\[g(z) =  \lim_{r,s \ria +0}\frac 1 {r+s}  \int_{z-r}^{z+s} g d\leb. \] 
%This is   equivalent to the formulation  above that $G'(z) = g(z)$ for a.e. $z$. For  if $f$ is a function on $[0,1]$, then \bc $f'(z) $ exists $\LR$  $ \lim_{r,s \ria 0+} S_f(z-s,z+r)/(s+r)$ exists, \ec  in which case they are equal. 
%
%If the limit on the right exists, it clearly equals $f'(z)$. Now suppose conversely that $c= f'(z)$ exists. Given $\eps>0$, choose $\delta >0$ such that  for $r,s >0$, 
%
%\bc $\max(r,s) < \delta $ implies  $|c-S_f(z-s,z)| \le \eps \lland |c-S_f(z,z+r)| \le \eps $. \ec
%Then  {\small \begin{eqnarray*} |(s+r) c - (f(z+r) - f(z-s)) |    & = & |sc - s S_f(z-s, z) + rc - rS_f(z,z+r) \\
%											& \le & |sc - s S_f(z-s, z)| + | rc - rS_f(z,z+r)| \\
%											& \le & \eps (s+r) \end{eqnarray*}
%			
%				Hence $|c - S_f(z-s, z+r)| \le \eps$.											}
%				} %END COM

\section{Weak $2$-randomness and \ML{} randomness}
\label{s:Weak2_and_ML}

%The map $Z \mapsto \sum\{ \tp{-i-1} \colon \, Z(i) = 1\}$ induces an effective  isomorphism of the open sets in Cantor space $\cantor$ and the open sets in the unit interval. In the following we will use this fact to transfer standard notions for Cantor space, such as \ML\ tests, to the setting of the unit interval.  

 In this section, when discussing inclusion, disjointness, etc., for open sets in the unit interval, we will  ignore    the elements that are  dyadic rationals.  For instance, we view the interval $(1/4, 3/4)$ as the union of $(1/4, 1/2)$ and $(1/2, 3/4)$. With this convention,   the clopen sets in Cantor space $\cantor$ correspond to the finite unions of open  intervals with dyadic rational endpoints.

\subsection{Characterizing weak 2-randomness in terms of differentiability}

 Recall that a real $z$ is \emph{weakly $2$-random} if $z$ is in no null $\PI 2$ set.

\begin{theorem} \label{thm:w2rChar} Let  $z\in [0,1]$. Then

	\n $z$ is weakly $2$-random $\LR$ 
	
	\hfill each a.e.\ differentiable computable function is differentiable at $z$. \end{theorem}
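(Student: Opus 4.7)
The plan is to prove the two directions of the equivalence separately.

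For the $(\RA)$ direction, suppose $z$ is weakly 2-random and $f$ is any a.e.\ differentiable computable function. I would show that the non-differentiability set $N_f = \{x\in[0,1]: f'(x) \text{ does not exist as a finite real}\}$ is an effective $\SI 3$ set --- in fact, a countable union of effective $\PI 2$ sets given uniformly in an index for $f$. The natural decomposition is
\[ N_f \;=\; \bigcup_{p<q,\,p,q\in\QQ}\bigl(\{\ol Df \ge q\}\cap\{\ul Df\le p\}\bigr) \;\cup\; \{\ol Df = \infty\} \;\cup\; \{\ul Df = -\infty\}. \]
Each level set $\{\ol Df \ge q\}$ is $\PI 2$: by the continuity of $f$,
\[ \{\ol Df \ge q\} \;=\; \bigcap_n \bigl\{x: \exists h\in\QQ,\ 0<|h|<2^{-n},\ S_f(x,x+h)>q-2^{-n}\bigr\}, \]
and for each rational $h$ the set $\{x: S_f(x,x+h)>q-2^{-n}\}$ is $\SI 1$ by Lemma~\ref{lem:comp_function_SI1O} applied to the computable function $x\mapsto S_f(x,x+h)$; the union over rational $h$ is therefore $\SI 1$. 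Analogous arguments place $\{\ul Df\le p\}$, $\{\ol Df=\infty\}$ and $\{\ul Df=-\infty\}$ in $\PI 2$, and intersections of $\PI 2$ sets are $\PI 2$. Since $f$ is a.e.\ differentiable, $N_f$ is $\leb$-null, so each $\PI 2$ component of the decomposition is null (being a subset of $N_f$). By the definition of weak 2-randomness, $z$ lies in no null $\PI 2$ set, so $z\notin N_f$ and $f$ is differentiable at $z$.

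For the $(\LA)$ direction, suppose $z$ is not weakly 2-random, witnessed by a null $\PI 2$ set $P\ni z$. Write $P=\bigcap_m\+G_m$ with uniformly $\SI 1$ sets $\+G_m$ and $\lim_m\leb\+G_m=0$. Non-effectively pass to a subsequence so that $\leb\+G_m\le 2^{-m}$; the resulting intersection is still a null $\PI 2$ set containing $z$, so it suffices to build $F$ non-differentiable on the whole intersection. Enumerate $\+G_m=\bigcup_k J_{m,k}$ as a union of open intervals with dyadic-rational endpoints, subdividing on the fly so that $|J_{m,k}|\le 2^{-m-k}$. Define
\[ g_m(x)=\sum_k |J_{m,k}\cap[0,x]|,\qquad F(x)=\sum_m g_m(x). \]
Because $\sum_{m,k}|J_{m,k}|\le\sum_m 2^{-m}=1$, the series converges uniformly at a geometric rate, giving a computable nondecreasing $F$ with total variation at most~$1$. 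By Lebesgue's theorem $F$ is a.e.\ differentiable.

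It remains to show $F$ is non-differentiable at every $z\in\bigcap_m\+G_m$. For each $m$, openness of $\+G_m$ ensures $z$ lies in the interior of some $J_{m,k_m}$, so for small $h>0$ the interval $[z,z+h]$ lies in $J_{m,k_m}$ and contributes $h$ to $g_m(z+h)-g_m(z)$; hence $g_m'(z)\ge 1$ (it in fact equals the number of indices $k$ with $z\in J_{m,k}$). Because each $g_m$ is nondecreasing, for any $h>0$ and $M\in\NN$,
\[ S_F(z,z+h) \;\ge\; \sum_{m\le M} S_{g_m}(z,z+h). \]
Taking $\liminf_{h\to 0+}$ gives $\ul DF(z)\ge\sum_{m\le M} g_m'(z)\ge M$; letting $M\to\infty$, $\ul DF(z)=\infty$ and $F'(z)$ does not exist as a finite real.

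The main obstacle is the bookkeeping in the backward direction to make $F$ genuinely computable in the sense of Definition~\ref{CompDef}: one must effectively subdivide the $\SI 1$ enumeration of each $\+G_m$ to secure the size bound $|J_{m,k}|\le 2^{-m-k}$ and then verify an explicit modulus of uniform continuity for the partial sums. The non-effective choice of a fast-enough subsequence of $(\+G_m)$ is harmless, affecting only \emph{which} computable index for $F$ is produced, not the existence of such an index. A secondary, more routine technicality in the forward direction is appealing to continuity of $f$ to pass from real to rational $h$ in the definition of $\ol Df\ge q$, which is what keeps the level sets inside $\PI 2$.
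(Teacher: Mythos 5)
Your forward direction is correct and follows the paper's route, packaging the $\SI 3$ decomposition slightly differently (using the level sets $\{\ol Df\ge q\}$, $\{\ul Df\le p\}$, and the two infinite cases, instead of the paper's auxiliary sets $\ul C(p),\ol C(q)$); both work and give the same conclusion.

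The backward direction has a structural flaw: the function $F$ you build is \emph{nondecreasing}, indeed absolutely continuous. By Theorem~\ref{thm:CRdiff} every computable nondecreasing function is differentiable at every computably random real, and by Theorem~\ref{thm:MLbV} every computable function of bounded variation is differentiable at every \ML\ random real. Since weak $2$-randomness is strictly stronger than \ML\ randomness (for instance $\Omega$ is \ML\ random but, being $\DII$, is not weakly $2$-random), there are reals $z$ lying in a null $\PI 2$ set at which no computable nondecreasing or bounded-variation function can fail to be differentiable. A nondecreasing $F$ therefore can never witness the implication for such $z$: your derivation of $\ul DF(z)=\infty$ cannot coexist with $F$ being computable. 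The hidden error is precisely your dismissal of the non-effective subsequence choice as harmless. If a subsequence $(m_i)$ with $\leb\+G_{m_i}\le 2^{-i}$ could be found computably, then $(\+G_{m_i})_i$ would be a \ML\ test capturing $z$, contradicting that $z$ is \ML\ random; so for such $z$ the subsequence is necessarily non-computable, and with it $F$ has no computable index. Concretely, $g_m(q)=\leb([0,q]\cap\+G_m)$ is in general only left-c.e., and the size bound $|J_{m,k}|\le 2^{-m-k}$ that was supposed to give a computable tail estimate is itself not always achievable by on-the-fly subdivision: the remaining budget $\sum_{k\ge K}2^{-m-k}$ shrinks geometrically while an interval of length close to $\leb\+G_m$ can arrive late in the enumeration. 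The paper avoids all of this by working with the \emph{unmodified} sequence $(\+G_m)$ and summing oscillating sawtooth functions $\Lambda_{C_{m,i},4^m}$ on intervals $C_{m,i}$ of length at most $8^{-m}$: the resulting $f$ is neither monotone nor of bounded variation in general, its oscillation forces $\ol Df(z)=\infty$ or $\ul Df(z)=-\infty$ at every $z\in\bigcap_m\+G_m$, computability of $f$ comes from the rapid shrinkage of the \emph{intervals} rather than of the measures $\leb\+G_m$, and the nullity of $\bigcap_m\+G_m$ yields a.e.\ differentiability via the enlarged sets $\hat\+G_m$.
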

	
	\begin{proof}

\n ``$\RA$'':  For   rationals $p,q$ let 
\begin{eqnarray*}  \ul C(p) & = & \{z \colon \,  \fa t >0 \, \ex h  \, \big [ 0< |h| \le t  \lland  \, S_f(z, z+h) < p \big ] \} \\
\ol C(q) & = & \{z \colon \,   \fa t >0 \, \ex h  \, \big [ 0< |h| \le t  \lland  \, S_f(z, z+h) > q \big ] \},
\end{eqnarray*}
	where $t,h$ range over   rationals. The function  $z \mapsto S_f(z,z+h)$ is computable, and its index in the sense of Subsection~\ref{ss:compfunctions} can be obtained uniformly in $h$. Hence  the set  \bc $\{z \colon S_f(z, z+h) <  p\}$ \ec  is a $\SI 1$ set  uniformly in $p,h$ by Lemma~\ref{lem:comp_function_SI1O} and its uniformity in the strong form remarked after its  proof. Thus $\ul C(p)$ is a $\PI 2$ set  uniformly in $p$. Similarly,  $\ol C(q)$ is a $\PI 2$ set uniformly in $q$. Clearly, 

\vsps

\bc \begin{tabular}{lllll}	
	$\ul Df(z) <p $ & $ \RA$ & $ z \in \ul C(p) $ & $ \RA $ & $\ul Df(z) \le p$,  \\ \\ 
	
		$\ol Df(z) > q$ & $  \RA$ & $ z \in \ol C(q) $& $ \RA $ & $ \ol Df(z) \ge q$. 
	\end{tabular} \ec

\vsps

	Therefore  $f'(z)$ fails to exist iff 
	
	$$\fa p \, [ \, z \in \ul C(p)] \llor \fa q \, [ \, z \in \ol C(q)] \llor \ex p \ex q [p< q \lland z \in \ul C(p) \lland z \in \ol C(q)], $$ 

\item 
\item 	
\n 	where $p, q$ range over rationals. This shows that $\{z \colon \, f'(z) \ \text{fails to exist} \}$ is a $\SI 3 $   set (i.e., an effective union of $\PI 2$ sets).  If  $f$ is a.e.\ differentiable then this set is null and thus cannot contain a weakly $2$-random.
	
	\vsps
	%%%%%%%%%%%%%%%%%%%%%%%%
	
\n ``$\LA$'': 
For an interval $A\sub [0,1]$ and $p \in \NN $  let $\Lambda_{A, p}$ be the ``sawtooth function'' that is constant 0 outside $A$, reaches $p |A|/2 $ at the middle point of $A$ and is linearly interpolated elsewhere.   Thus  $\Lambda_{A, p}$  has slope $\pm p$ between pairs of  points in the same half of  $A$, and 

\begin{equation} \label{eqn:Lambdabound}\Lambda_{A, p } (x) \le p |A|/2, \end{equation} 
for each $x$.

	 Let $(\mathcal G_m)\sN m$ be a   sequence of uniformly $\SI 1$ sets in the sense of Subsection~\ref{ss:arithmetical_complexity}, where $\mathcal G_m \sub [0,1]$, such that  $\mathcal G_m \supseteq \mathcal G_{m+1}$ for each $m$. We build a computable function $f$ such that $f'(z)$ fails to exist for every  $z \in \bigcap_m \mathcal G_m$.  To  establish the implication $\LA$, we also  show in Claim~\ref{claim:a.e.diff}   that  the function $f$ is  a.e.\ differentiable in    the case that $\bigcap_m \mathcal G_m$ is null.

 Recall the convention that we ignore    the dyadic rationals when discussing inclusion, union, disjointness, etc.,  for open sets in the unit interval. We have  an  effective enumeration    $(D_{m,l})\sN {m,l}$ of  open intervals with dyadic  rational endpoints  such that \bc $\mathcal  G_m= \bigsqcup_{l\in \NN} D_{m,l},$ \ec for each $m$ (the symbol $\bigsqcup$ indicates a disjoint union). We may assume, without loss of generality, that for each $m,k$, there is an $l$ such that $D_{m+1,k}\subseteq D_{m,l}$.
 
We    construct  by recursion on~$m$     a computable  double sequence $(C_{m,i})\sN {m, i} $  of   open intervals with dyadic  rational endpoints such that  $\bigsqcup_{i} C_{m,i} = \mathcal G_m$,   
 \begin{equation} \label{eqnCprop} C_{m,i} \cap C_{m,k} = \ES \  \text{and} \ |C_{m,i}| \ge |C_{m,k}| \ \text{for} \ i  <  k,  \end{equation}   
and, 	 furthermore, if $B = C_{m,i}$ for $m > 0$,  then there is an interval $A = C_{m-1,k}$ such that 
\begin{equation} \label{eqn:AB} B \sub A \lland |B| \le 8^{-m} |A|.\end{equation}
Each interval of the form $D_{m,k}$ will be a finite union of intervals of the form~$C_{m,i}$.

\vsps

\n {\it Construction of  the  double sequence $(C_{m,i})\sN {m, i} $.} 

\n Suppose $m=0 $,  or $m>0$ and  we have already defined  $(C_{m-1,j})\sN j $.  Define $(C_{m,i})\sN { i} $ as follows.

\vsps

Suppose $N \in \NN $ is greatest such that  we   have already defined $C_{m,i}$ for $i < N$. When a new interval $D=D_{m,l}$ with dyadic rational endpoints  is  enumerated into $\mathcal G_m$, if $m>0$ we wait until $D$ is contained in a union of intervals $\bigcup_{r\in F} C_{m-1, r}$, where $F$ is finite. This is possible because $D$ is contained in a single interval in $\mathcal G_{m-1}$, and this single interval was handled in the previous stage of the recursion. If $m>0$, let $\delta$ be the minimum of $|D|$ and  the lengths of these finitely many intervals; if $m=0$,  let $\delta =|D|$.   Let $\eps$ be the minimum of     $|C_{m,N-1}|$ (if $N>0$), and $8^{-m}2^{-l} \delta$. (We will need the factor  $2^{-l}$ when we show  in Claim~\ref{claim:a.e.diff} that~$f$ is a.e.\ differentiable.)

 We  partition $D$ into  disjoint sub-intervals $C_{m,i}$ with dyadic rational endpoints, $i = N, \ldots, N'-1$,    and of nonincreasing length at most $\eps$, so that in case $m>0$ each of the sub-intervals is contained in an interval $A$ of the form $C_{m-1, r}$ for some  $r \in F$.  
For $m \in \NN$ let

\begin{equation*} f_m  =  \sum_{i=0}^\infty \Lambda_{C_{m,i},  4^m},
                       \end{equation*}
                       and let $f =  \sum_{m=0}^\infty f_m$.
Since $|C_{m,i}| \le 8^{-m}$ for each $i$, we have $f_m(x) \le 8^{-m}4^{m}/2 \le  2^{-m-1}$ for each~$x$. 

\begin{claim} The function $f$ is computable. \end{claim} 
Since  $f_m(x) \le 2^{-m-1}$ for each $m$,  $f(x) $ is defined for each $x \in [0,1]$. We  first show that $f(q)$ is computable uniformly in a rational $q$. Given $m>0$, since $|C_{m,i}| \to_i 0$,  we can   find $i^*$ such that \bc $|C_{k,i^*}| \le 8^{-m}/(m+1)$ for each $k \le m$. \ec
Then, since the length of the  intervals $C_{k,i}$ is nonincreasing in $i$ and by (\ref{eqn:Lambdabound}), we have  $\Lambda_{C_{k,i},  4^k}(q)\le 2^{-m-1}/(m+1)$ for all $k\le m$ and $i\ge i^*$.
So by the disjointness in (\ref{eqnCprop}),    $\sum_{k\le m}\sum_{i\ge i^*}\Lambda_{C_{k,i},4^k}(q)\le 2^{-m-1}$.  We also have  $\sum_{k>m}f_k(q)\le \sum_{k>m}2^{-k-1} = 2^{-m-1}$.  Hence 
the  approximation to $f(q)$ at stage $i^*$ based only on the intervals of the form $C_{k,i}$ for $k\leq m$ and $i < i^*$   is within $\tp{-m}$ of $f(q)$.

 To show $f$ is computable, by Subsection~\ref{ss:compfunctions} it suffices now to verify that~$f$ is effectively uniformly continuous.   Suppose $|x-y|\le 8^{-m}$. For $k <  m$, we have  $|f_k(x)-f_k(y)| \le 4^k |x-y|$. For $k \ge  m $  we have  $f_k(x), f_k(y) \le \tp{-k-1}$. Thus
 
  \[|f(x)-f(y)| \le |x-y|  \sum_{k<  m} 4^{k}  + \sum_{k\ge m} \tp{-k} < \tp{-m+2}.\]   

\begin{claim} \label{cl:nondiff} Suppose $z\in \bigcap \mathcal G_m$. Then $\ol D f(z) = \infty$ or $\ul Df(z) = -\infty$. \end{claim}
For each $m$ there is  an  interval $A_m$ of the form $C_{m,i}$  such that 
$z \in A_m$.  Suppose first that  there are infinitely many $m$ such that $z$ is in the left  half of $A_m$. We show $\ol D f(z) = \infty$. Let $m$ be one such value.
Choose 
\[ h = \pm |A_m |/4 \]
  so that $z+h$ is also in the left half of $A_m$. We show that  the slope 
\bc $S_{f_m}(z, z+h) = 4^m$ \ec 
  does not cancel out with the  slopes, possibly negative, that are  due to other $f_k$. If $k< m$ then we have  $|S_{f_k}(z, z+h)| \le  4^k$. Suppose $k>m$.  Then     by (\ref{eqn:Lambdabound}) and  (\ref{eqn:AB})  we have $f_k(x) \le 4^k 8^{-k} |A_m|/2 = 2^{-k-1}|A_m|$ for $x \in \{ z, z+h\}$ and hence  

\[ |S_{f_k}(z, z+h) | \le \frac{2^{-k}|A_m|}{|h|} = 2^{-k+2}.\]
Therefore, for $m>0$ 

\[ S_f(z, z+h) \ge 4^m  - \sum_{k< m }  4^k  - \sum_{k>m} \tp{-k+2} \ge 4^{m-1}-4. \]
Thus   $\ol D f(z) = \infty$. 

If  there are infinitely many $m$ such that $z$ is in the right  half of $A_m$,  then  $\ul D f(z) = -\infty$ by a similar argument.

\begin{claim}\label{claim:a.e.diff} If $\bigcap_m    {\mathcal G}_m$ is null,  then $f$ is  differentiable almost everywhere. \end{claim}
Let $  \hat D_{m,l}$ be the open interval in $\R$ with the same middle point as $D_{m,l}$ such that $|\hat D_{m,l}| = 3 |D_{m,l}|$.  Let $ \hat  {\mathcal G}_m  =  [0,1] \cap \bigcup_l  \hat  D_{m,l}$. Clearly $\leb  \hat  {\mathcal G}_m \le 3 \leb \mathcal G_m$, so that $\bigcap_m  \hat {\mathcal G}_m$ is null.

We show that $f'(z)$ exists for each $z \not \in \bigcap_m   \hat   {\mathcal G}_m $ that is not a dyadic rational. 
In the following, let  $h, h_0$, etc., range over   rationals. Note that 
\bc $S_f(z,z+h)= \sum_{k=0}^\infty S_{f_k}(z, z+h)$. \ec  
 Let $m$ be the least number  such that $z \not \in  \hat  {\mathcal G}_m$. Since $z$ is not a dyadic rational, we may choose $h_0>0$ such that for  each $k< m$, the  function $f_k$  is linear in the interval $[z-h_0, z+h_0]$. So for   $|h|\le h_0$ the contribution of these $f_k$ to the slope $S_f(z,z+h)$ is constant. It now suffices to show that \bc $\lim_{h \to 0}\sum_{r=m}^\infty |S_{f_r}(z,z+h)| =0$. \ec  Note that $f_r$ is nonnegative and $f_r(z) = 0$ for $r\ge m$. Thus  it suffices,  given $\eps >0$,  to find a positive  $h_1 \le h_0$  such that 
\begin{equation} \label{eqn:sumabsfr} \sum_{r=m}^\infty f_r(z+h) \le \eps |h| \end{equation}
whenever  $|h|\le h_1$.

Roughly, the idea is the following: take $r\ge m$. If $f_r(z+h)\neq 0$ then  $z+h$ is in some $D_{m,l}$. Because $z\not \in \hat    D_{m,l}$, $|h| \ge |D_{m,l}|$.  We make sure that   $f_r(z+h)$ is small compared to $|h|$ by  using that the height of the relevant sawtooth depends on  the length of its base interval $C_{r,v}$ containing $z+h$, and that this length is small compared to $h$.

We now provide  the details on how to find $h_1$ as above. Choose $l^* \in \NN$ such that $2^{-l^*}\leq\eps$. If $C_{m,i}\sub D_{m,l}$ and $l\geq l^*$, we have  
\begin{equation} \label{eqn:CshorterD}|C_{m,i}| \le 8^{-m} \eps |D_{m,l}|. \end{equation}
Let $h_1 = \min \{  |D_{m, l}| \colon \, {l < l^*} \}$. Suppose  $h>0$ and   $|h|\le h_1$. 

Firstly we consider the contribution of $f_m$ to  (\ref{eqn:sumabsfr}).  If $f_m(z+h) >  0$ then $z +h \in C_{m,i}\sub D_{m,l}$ for some (unique) $l,i$. Since $z \not \in  \hat  D_{m,l}$ and $|h|\le h_1$, we have  $|h|\ge |D_{m,l}|$ and $l\ge l^*$. By (\ref{eqn:Lambdabound}),  (\ref{eqn:CshorterD}) and the definition of $f_m$, 
 \bc  $f_m(z+h) \le 4^m |C_{m,i}|/2 \le \tp{-m-1} |D_{m,l}| \eps$. \ec  Thus  $f_m(z+h)  \le \tp{-m-1} \eps |h|$.

Next,  we consider the contribution of $f_r$, $r>m$, to  (\ref{eqn:sumabsfr}).
If $f_r(z+h) > 0$ then $z +h \in C_{r,v}\sub C_{m,i}$ for some $v$. Thus, by construction,  
 \bc $f_r(z+h)\le 4^r |C_{r,v}|/2 \le 4^r 8^{-r} |C_{m,i}|/2 \le \tp{-r-1} |D_{m,l}| \eps\le \tp{-r-1} \eps |h|$. \ec
This establishes  (\ref{eqn:sumabsfr}) and completes the proof. \end{proof}

    \subsection{Characterizing \ML{} randomness in terms of differentiability}

\label{ss:CharML}

  % In Subsection~\ref{ss:ClassAn} we reviewed functions $f$ of bounded variation.

 Recall that a function $f \colon \,  [0,1]\ria \R$ is   of \emph{bounded variation} if 

\[ \infty > \sup \sum_{i=1}^n   | f(t_{i+1}) - f(t_i)|, \]
where the sup is taken over all collections $ t_1 < t_2 <  \ldots <   t_n$ in $[0,1]$.
 A stronger condition on $f$  is          absolute continuity: for every $\eps > 0$, there is $\delta > 0$ such that \[ \eps  > \sup \sum_{i=1}^n   | f(b_i) - f(a_i)|, \]
for every  collection $ 0\le a_1 < b_1 \le a_2 < b_2  \le \ldots \le a_n < b_n\le 1 $ such that $\delta > \sum_{i=1}^n  b_i - a_i$. The absolutely continuous functions are precisely the indefinite integrals of functions in $\+L_1 ([0,1])$ (see  \cite[Thm. 5.3.6]{Bogachev.vol1:07}).  Note that it is easy to construct a computable differentiable function that is not of bounded variation. 

We will characterize \ML{} randomness via differentiability of computable functions of bounded variation,  following the scheme  $(*)$ in the introduction.  For the implication $\LA$, an appropriate  single function suffices, because there is a universal \ML{} test. 
 
\begin{lemma}[\cite{Demuth:75}, Example 2]  \label{lem:MLtest_to_function} There is a computable function $f$ of bounded variation  (in fact,  absolutely  continuous) such that $f'(z)$ exists only for \ML{} random reals $z$. \end{lemma}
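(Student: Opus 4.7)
The plan is to invoke the construction from the $\LA$ direction of Theorem~\ref{thm:w2rChar}, applied to a Martin-L\"of test whose levels shrink fast enough to make the resulting function of bounded variation.

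First I would fix a universal Martin-L\"of test $(\mathcal V_n)\sN n$ with $\leb \mathcal V_n\le\tp{-n}$ and thin it to $\mathcal U_m=\bigcup_{k\ge 3m+1}\mathcal V_k$. The sequence $(\mathcal U_m)$ is uniformly $\SI 1$, nested, satisfies $\leb\mathcal U_m\le\tp{-3m}$, and every non-\ML{} random real lies in $\bigcap_n\mathcal V_n\sub\bigcap_m\mathcal U_m$.

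Next I would run the construction of the $\LA$ direction of Theorem~\ref{thm:w2rChar} verbatim with $(\mathcal U_m)$ in place of $(\mathcal G_m)$: enumerate $\mathcal U_m=\bigsqcup_l D_{m,l}$ so that each $D_{m+1,k}$ fits inside some $D_{m,l}$, build the refined double sequence $(C_{m,i})$ of pairwise disjoint basic dyadic open intervals with $\bigsqcup_i C_{m,i}=\mathcal U_m$ and $|C_{m,i}|\le 8^{-m}|C_{m-1,k}|$ whenever $C_{m,i}\sub C_{m-1,k}$, and define $f_m=\sum_i\Lambda_{C_{m,i},4^m}$ and $f=\sum_m f_m$. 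The arguments already carried out there yield that $f$ is computable, and that for each $z\in\bigcap_m\mathcal U_m$ either $\ol Df(z)=\infty$ or $\ul Df(z)=-\infty$; in particular $f'(z)$ does not exist whenever $z$ is not \ML{} random.

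The only new point is bounded variation (and absolute continuity). Since the $C_{m,i}$ for fixed $m$ are pairwise disjoint and $\Lambda_{C_{m,i},4^m}$ vanishes off $C_{m,i}$, the variation of $f_m$ is $\sum_i 4^m|C_{m,i}|=4^m\leb\mathcal U_m\le\tp{-m}$. Each $f_m$ is the indefinite integral of a bounded step function of $\+L_1$-norm $\tp{-m}$, hence absolutely continuous; because $\sum_m\tp{-m}<\infty$, the series of derivatives $\sum_m f_m'$ converges in $\+L_1$ by dominated convergence, and $f$ equals the indefinite integral of this $\+L_1$-limit, so $f$ is absolutely continuous with total variation at most $\sum_m\tp{-m}=2$.

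The main obstacle is precisely this variation estimate: the slopes $\pm 4^m$ are indispensable for the non-differentiability argument in Theorem~\ref{thm:w2rChar} (they must dominate $\sum_{k<m}4^k$ in absolute value), so the variation of $f_m$ is forced to be of order $4^m\leb\mathcal U_m$. For a generic descending $\SI 1$ sequence this would diverge, but thinning a universal Martin-L\"of test to $\mathcal U_m=\bigcup_{k\ge 3m+1}\mathcal V_k$ drives the measure down to $O(\tp{-3m})$, and $4^m\cdot\tp{-3m}=\tp{-m}$ is summable. No other part of the $\LA$ construction needs adjustment.
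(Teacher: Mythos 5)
Your proposal is correct and follows essentially the same route as the paper: the paper also takes a universal Martin-L\"of test with $\leb \mathcal G_m \le 8^{-m}$ (your explicit thinning $\mathcal U_m=\bigcup_{k\ge 3m+1}\mathcal V_k$ is just one way of arranging this), feeds it into the construction from the $\Leftarrow$ direction of Theorem~\ref{thm:w2rChar}, and proves absolute continuity by writing each $f_m$ as the indefinite integral of a step function $g_m$ with $\int|g_m|=4^m\leb\mathcal G_m\le 2^{-m}$ and summing in $\+L_1$. Your identification of the key tension --- the slopes $4^m$ forced by the non-differentiability argument versus the measure decay $8^{-m}$ needed for summable variation --- is exactly the point of the paper's normalization.
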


\begin{proof} Let $(\mathcal G_m)\sN m$ be a universal \ML\ test, where $\mathcal G_m \sub [0,1]$, such that  $\mathcal G_m \supseteq \mathcal G_{m+1}$ for each $m$.  We may   assume that   $\leb \mathcal G_m \le 8^{-m}$. Define a computable function $f$ as in the proof of the implication $\LA$ of Theorem~\ref{thm:w2rChar}. By Claim~\ref{cl:nondiff},  $f'(z)$ fails to exist for any $z \in \bigcap_m \+ G_m$, i.e., for any $z$ that is not \ML{} random. It remains to show the following.

\begin{claim} \label{claim:facbv} $f$ is absolutely continuous, and hence of bounded variation. \end{claim}

\n For an open interval $A\sub [0,1] $, let $\Theta_{A,p}$ be the function that is undefined at  the endpoints and the  middle point of $A$,  has value $p$ on the left  half, value $-p$ on the right half of $A$, and is $0$ outside $A$. Then $\int_0^x \Theta_{A,p}= \Lambda_{A,p}(x)$. 

Let $g_m = \sum_{i} \Theta_{C_{m,i}, 4^m}$. Note that $\leb \+ G_m \le 8^{-m}$ implies that $g_m$ is integrable with   $\int |g_m| \le \tp{-m}$, and hence $\sum_m \int |g_m| \le 2$. Then, by a well-known corollary to the Lebesgue dominated convergence theorem (see for instance \cite[Thm.\ 1.38]{Rudin:87}),  the function 
 $g(y) = \sum_m g_m(y)$ is defined a.e., $g$ is integrable, and $\int_0^x g= \sum_m \int_0^x g_m$. Since $f_m(x) = \int_0^x g_m$, this implies that $f(x) = \int_0^x g$. 
 Thus,  $f$ is absolutely continuous.
\end{proof}

For a function $h$,  let $h^+ = \max (h,0) $ and $h^- = \max (-h, 0)$, so that $h = h^+ - h^-$. 
Since $g^+ = \sum_m g_m^+$ and $g^-= \sum_m g_m^-$,
 by the monotone convergence theorem (see for instance \cite[Thm. 2.8.2]{Bogachev.vol1:07}), we have \bc $\int_0^x g^+ = \sum_m \int_0^x g_m^+$ and $\int_0^x g^- = \sum_m \int_0^x g_m^-$. \ec
Since $\leb \mathcal G_m \le 8^{-m}$, we have $\int_0^x g_m^+ \le \tp{-m}$ and $\int_0^x g_m^- \le \tp{-m}$, so both sums above  are bounded by $2$.  
Hence the   function $g$ is integrable with $\int_0^x g = \int_0^x g^+ - \int_0^x g^- = f(x)$. 

%(It is a  reasonable conjecture   that a  computable function $f$ is the indefinite integral of an $L_1$-computable function iff  $f$  is \emph{effectively} absolutely continuous.) FALSE
 
We now arrive at the   analytic characterization of \ML{} randomness originally  due to Demuth~\cite{Demuth:75}.  The implication (i)$\ria$(ii) below restates \cite[Thm.\ 3]{Demuth:75} in classical language. 
 
\begin{theorem}  \label{thm:MLbV}
The following are equivalent for $z\in [0,1]$:

\bi

\itone
  $z$ is Martin-L\"of random.
  
\ittwo   Every computable  function  $f$   of bounded variation is differentiable at~$z$.
\itthree  Every computable  function  $f$   that is absolutely continuous  is differentiable at~$z$.
   \ei
 
\end{theorem}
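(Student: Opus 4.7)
My plan is to establish the cycle $(i) \Rightarrow (ii) \Rightarrow (iii) \Rightarrow (i)$. The implication $(ii) \Rightarrow (iii)$ is immediate, since every absolutely continuous function is of bounded variation, so a computable AC function is in particular a computable BV function to which $(ii)$ applies. The implication $(iii) \Rightarrow (i)$ is exactly Lemma~\ref{lem:MLtest_to_function}: the computable absolutely continuous function produced there fails to be differentiable at any non-ML-random point, so if every computable AC function is differentiable at $z$, then $z$ must be ML-random. All the content is therefore in $(i) \Rightarrow (ii)$.

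The strategy for $(i) \Rightarrow (ii)$ is an effective Jordan decomposition, reducing to Theorem~\ref{thm:CRdiff} via the fact that ML-randomness implies computable randomness. Classically, a BV function splits as $f = g_1 - g_2 + f(0)$ with $g_1 = (V_f + f - f(0))/2$ and $g_2 = (V_f - f + f(0))/2$ both nondecreasing, where $V_f(x)$ is the total variation of $f$ on $[0,x]$. If $V_f$ were computable, $g_1$ and $g_2$ would be computable nondecreasing, and Theorem~\ref{thm:CRdiff} applied at the computably random $z$ would yield $f'(z)=g_1'(z)-g_2'(z)$. The obstacle is that $V_f$ is only uniformly left-c.e.\ at rational arguments, not computable. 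My plan is to trap the non-computability of $V_f$ inside a Martin-L\"of test. Take a uniformly computable double sequence $v_s$ of rational-valued nondecreasing functions with $v_s \nearrow V_f$ pointwise on rationals, arranged so that the global deficit $V_f(1) - v_s(1)$ drops below $4^{-s}$. Then for each $m$ the set $\mathcal U_m = \{x : V_f(x) - v_{s(m)}(x) > 2^{-m}\}$, for a suitable computable stage $s(m)$, is $\Sigma^0_1$ uniformly in $m$ (by lower semi-computability of $V_f$), and has measure at most $2^{-m}$ by a Markov-style estimate against the total variation deficit. Hence $(\mathcal U_m)$ is a Martin-L\"of test.

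For ML-random $z \notin \bigcap_m \mathcal U_m$, past some stage $s$ the computable nondecreasing approximation $v_s$ agrees with $V_f$ to within $2^{-m}$ near $z$; one then wants a single global computable nondecreasing function $g$ such that $g \pm f$ are nondecreasing and differentiable at each such $z$. This $g$ is built by a peak construction in the spirit of Lemma~\ref{lem:MLtest_to_function}: take $g = v_\infty + $ sawtooth peaks placed on the sets $\mathcal U_m$ with heights tuned so that $g(y)-g(x) \ge |f(y)-f(x)|$ for all rational $x<y$, while keeping $g$ uniformly computable and of controlled variation. Then $g+f$ and $g-f$ are computable nondecreasing functions (the construction forces this on rationals, and Proposition~\ref{prop:monotonic computable} extends them computably), so by Theorem~\ref{thm:CRdiff} and the fact that ML-random $\Rightarrow$ computably random, both are differentiable at $z$. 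Subtracting gives the differentiability of $f$ at $z$, completing $(i) \Rightarrow (ii)$.

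The main obstacle I expect is the peak construction: I need to place enough extra monotone growth in $g$ to dominate the slopes of $f$ everywhere, while (a) keeping $g$ computable despite using the left-c.e.\ data $v_s$, and (b) ensuring that at ML-random $z$ the added peaks do not themselves spoil differentiability. Finiteness of the total variation of $f$, together with the measure bound $\lambda(\mathcal U_m)\le 2^{-m}$, is what makes the added peak mass summable, echoing the $\sum_m 2^{-m}$ estimates in the proof of Lemma~\ref{lem:MLtest_to_function}.
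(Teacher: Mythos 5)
Your overall cycle $(i)\Rightarrow(ii)\Rightarrow(iii)\Rightarrow(i)$ is right, and $(ii)\Rightarrow(iii)$ and $(iii)\Rightarrow(i)$ are handled exactly as the paper does. You also correctly locate the heart of $(i)\Rightarrow(ii)$: one wants a Jordan decomposition, and the obstruction is that the variation function $V_f$ is only (uniformly) left-c.e.\ on rationals, not computable. But the way you propose to overcome it cannot work.

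Your plan is to produce a single \emph{computable} nondecreasing $g$ with $g(y)-g(x)\ge|f(y)-f(x)|$ for all rationals $x<y$, so that $g+f$ and $g-f$ are computable nondecreasing and Theorem~\ref{thm:CRdiff} applies to each of them. Observe what this would give: if such a computable $g$ existed, then for \emph{every} computably random $z$ (not just every ML-random $z$) the functions $g\pm f$, being computable nondecreasing, would be differentiable at $z$, and hence so would $f=\tfrac12\bigl((g+f)-(g-f)\bigr)$. But that is false: Lemma~\ref{lem:MLtest_to_function} produces a computable absolutely continuous (hence BV) function that is differentiable \emph{only} at ML-random reals, and there are computably random reals that are not ML-random. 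So for that particular $f$ no computable nondecreasing $g$ with $g\pm f$ nondecreasing can exist; your ``peak construction'' is doomed from the start. The monotone constraint $g(y)-g(x)\ge|f(y)-f(x)|$ for all $x<y$ forces $g-V_f$ to be nondecreasing, so $g$ must dominate $V_f$ in slope; adding sawtooth mass on top of a computable approximation $v_s$ does not make this test survive the left-c.e.\ uncertainty of $V_f$, and indeed it cannot, by the contradiction just given. There is a second, smaller issue: arranging the deficit bound $V_f(1)-v_s(1)<4^{-s}$ effectively already presupposes knowing $V_f(1)$ to precision $4^{-s}$, i.e.\ that $V_f(1)$ is computable, which is exactly what is in doubt; without it the Markov estimate that should give $\leb\,\mathcal U_m\le 2^{-m}$ does not go through uniformly.

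The paper's route around this is genuinely different in kind. It does not try to find a computable Jordan decomposition; instead it observes that the set of (names of) pairs $\langle f_0,f_1\rangle$ of nondecreasing functions on $I_\Q$ with $f=f_0-f_1$ is a nonempty $\Pi^0_1$ class, and invokes the low-for-$z$ basis theorem: since $z$ is ML-random, there is a member $\langle\eta_0,\eta_1\rangle$ of that class relative to which $z$ is still ML-random (and hence computably random). Applying the \emph{relativized} form of Theorem~\ref{thm:CRdiff} (as part (I) of Theorem~\ref{thm:extensionIQcomputable}) with oracle $\langle\eta_0,\eta_1\rangle$ gives that $f_0$ and $f_1$ are (pseudo-)differentiable at $z$, whence $f$ is. This is exactly where ML-randomness is used in a way that computable randomness cannot replace: the basis theorem needs it. If you want to repair your argument, you should replace ``find a computable $g$'' by ``find a $g$ relative to which $z$ stays random'' and then relativize Theorem~\ref{thm:CRdiff}; that is precisely what the paper does in Subsection~\ref{ss:Proof_MLbV}.
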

 \begin{proof}  
 The implication  (iii)$\ria$(i) follows from Lemma~\ref{lem:MLtest_to_function}.  The implication (ii)$\ria$(iii) follows because each absolutely continuous function has bounded variation.  The   implication   (i)$\ria$(ii)   will be     postponed to Subsection~\ref{ss:Proof_MLbV}, where we prove it with a weaker hypothesis on the effectivity of $f$.
\end{proof}
%\subsection{Consequences of Theorem~\ref{thm:MLbV}}
We obtain a preservation  result for \ML{} randomness similar to Corollary~\ref{cor:diff-bij}.   
 For instance,  \ML{}  randomness is also preserved under the map    $z \to e^z$, and,   for each computable  real $\aaa \neq   0 $, under the map $z \to  z^\aaa$. 
\begin{cor} \label{cor:invariance_ML} Suppose $z \in \RR$ is \ML{} random. Let $H$ be a computable function that is Lipschitz and 1-1  in a neighborhood of $z$. If $H'(z) \neq 0$, then $H(z)$ is \ML{} random.
 \end{cor}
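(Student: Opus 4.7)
The plan is to mirror the proof of Corollary~\ref{cor:diff-bij}, substituting Theorem~\ref{thm:MLbV} for Theorem~\ref{thm:CRdiff}. By Theorem~\ref{thm:MLbV}, it suffices to show that $f'(H(z))$ exists for every computable function $f\colon [0,1] \to \RR$ of bounded variation. Fix such an $f$ and let $U$ be an open neighborhood of $z$ on which $H$ is Lipschitz and 1-1. Being continuous and injective on $U$, $H$ is strictly monotonic there; after possibly replacing $H$ by $-H$, I may assume $H$ is increasing on $U$. I then pick rationals $a < z < b$ with $[a,b]\subseteq U$, chosen small enough (by continuity of $H$, and by first translating and rescaling $f$ if necessary) that $H([a,b])$ lies in the domain $[0,1]$ of $f$.

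The key construction is the clamped composition
\[
  g(x) \;=\; f\bigl(H(\min(b,\max(a,x)))\bigr),\qquad x\in[0,1].
\]
Two properties of $g$ need to be checked. First, $g$ is computable as a composition of computable operations (rational clamping, $H$, and $f$). Second, $g$ has bounded variation on $[0,1]$: $g$ is constant on $[0,a]$ and on $[b,1]$, while on $[a,b]$ it equals $f\circ H$; since $H$ is monotonic on $[a,b]$, any partition of $[a,b]$ maps bijectively to a partition of $H([a,b])$, so the variation of $g$ on $[a,b]$ equals the variation of $f$ on $H([a,b])\subseteq[0,1]$, which is finite. Thus $g$ is a computable BV function on $[0,1]$, and Theorem~\ref{thm:MLbV} applied at the ML random point $z$ will yield that $g'(z)$ exists.

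To recover $f'(H(z))$, I would use the identity
\[
  S_g(z,z+h) \;=\; S_f\bigl(H(z),H(z+h)\bigr)\cdot S_H(z,z+h),
\]
valid for $h$ so small that $z+h\in(a,b)$. As $h\to 0$, $S_H(z,z+h)\to H'(z)\neq 0$. Because $H$ is continuous and 1-1 on $U$, the map $h\mapsto k:=H(z+h)-H(z)$ is a continuous bijection between punctured neighborhoods of $0$, so $\lim_{h\to 0}S_f(H(z),H(z+h))$ coincides with $\lim_{k\to 0}S_f(H(z),H(z)+k)$ whenever either exists. The identity above, together with $S_g(z,z+h)\to g'(z)$ and $S_H(z,z+h)\to H'(z)\neq 0$, then forces the first limit to exist and equal $g'(z)/H'(z)$. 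Hence $f'(H(z))$ exists, and $H(z)$ is ML random by Theorem~\ref{thm:MLbV}.

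The main obstacle will be the BV verification of the clamped composition $g$, where the monotonicity of $H$ on $[a,b]$ (coming from $H$ being 1-1 and continuous) is the decisive point. The Lipschitz hypothesis is not strictly required for this estimate; it plays a complementary role, guaranteeing that a clamped version of $H$ is itself a computable Lipschitz (hence BV) function on $[0,1]$, so Theorem~\ref{thm:MLbV} applied to this clamped $H$ ensures $H'(z)$ exists and the hypothesis $H'(z)\neq 0$ is nonvacuous. The final chain-rule-style manipulation is then soft and introduces no further analytic difficulty.
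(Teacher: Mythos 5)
Your proof is correct and structurally parallel to the paper's: both reduce the claim to showing $f'(H(z))$ exists for every computable $f$ in the relevant class, by composing $f$ with $H$, invoking Theorem~\ref{thm:MLbV} at the ML random point $z$, and then a chain-rule recovery $S_g(z,z+h)=S_f(H(z),H(z+h))\cdot S_H(z,z+h)$ together with the local homeomorphism $h\mapsto H(z+h)-H(z)$ and $H'(z)\neq 0$. The one genuine difference is the choice of function class and the corresponding preservation lemma. The paper takes $f$ \emph{absolutely continuous} and uses that composition with a Lipschitz map preserves absolute continuity near $z$ — this is precisely where the Lipschitz hypothesis is doing its work. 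You take $f$ of \emph{bounded variation} and show that the clamped $g=f\circ H$ has bounded variation because $H$ is \emph{monotonic} on $[a,b]$ (a consequence of continuity and injectivity), which is a weaker hypothesis; you correctly observe that this makes the Lipschitz assumption logically redundant in the reduction itself. Both routes go through the same Theorem~\ref{thm:MLbV}, just via clauses (ii) and (iii) respectively. Your explicit clamping construction is also a touch more careful than the paper's phrasing (``computable and absolutely continuous in a neighborhood''), since Theorem~\ref{thm:MLbV} applies to global functions on $[0,1]$ and so some extension of $f\circ H$ off the small interval is implicitly needed either way.
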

\begin{proof}  Let $f$ be an arbitrary function that is computable and absolutely continuous in a neighborhood of $H(z)$. Then 
the composition $f\circ H$  is absolutely continuous in a neighborhood of $z$. Thus, since $z$ is \ML{} random, $(f\circ H)'(z)$ exists. Since $H$ is continuous and 1-1 in a neighborhood of~$z$,  $H'(z) \neq  0$  implies that $f'(H(z))$ exists. Hence $H(z)$ is \ML{} random by Theorem~\ref{thm:MLbV}. 
\end{proof}

In fact it suffices to assume that the function $H$ is   Lipschitz in a neighborhood of $z$. This includes the functions $x \mapsto \sqrt x$ and $x \mapsto 1/x$. Thus, for instance, $\sqrt \Om$   and $1/ \Om$ are  \ML{} random.

Using  Lemma~\ref{lem:MLtest_to_function}  we   obtain an example of an  integrable function $g$ that  is not $\+L_1$-computable, even though its indefinite integral is computable. Let $g$ be the function  from the proof of Claim~\ref{claim:facbv}.  If $g$ were $\+L_1$-computable,  then the function~$f$  from Lemma~\ref{lem:MLtest_to_function}  would be differentiable at each computably random real by  Cor~\ref{cor:L1-comp} and because $f(x) = \int_0^x g$.

%Now one proceeds similar to the proof of  Corollary~\ref{cor:diff-bij}:  s 

\section{Extensions of  the results to weaker effectiveness notions}

\label{s:extensions}

So far, we have    proved two  instances of equivalences of type  $(*)$ at the beginning of the paper:   for weak $2$-randomness, and for  computable randomness. We have also stated a result for \ML{} randomness in Theorem~\ref{thm:MLbV} and proved the    implication $\LA$ in $(*)$; the converse implication will be provided in this section.

 We will see that these equivalences  do not rely on the full hypothesis that the functions in the relevant class are computable.   

\vsp 

\n	\emph{Computability on $I_\Q$.} Recall   that  $I_\Q = [0,1] \cap \QQ$.   We say  that a function $f$  is \emph{computable on $I_\Q$} if its  domain contains $I_\Q$, and $f(q)$ is a computable real uniformly in $q \in I_\Q$. A   function $f$ that is computable on $I_\Q$  and has domain $[0,1]$  need not be continuous: for instance, let $f(x) = 0 $ for $x^2 \le 1/2$, and $f(x)= 1  $ for $x^2 > 1/2$. 
In fact,  computability of a function $f$ on $I_\Q$ is so general that it can barely  be considered    a  genuine notion from computable analysis: we merely require that $(f(q))_{q \in I_\Q}$ can be viewed as  a computable family  of reals indexed by the  rationals in $[0,1]$, similar to the  computable sequences of reals defined in  Subsection~\ref{ss:CompReals}. 
	
Nonetheless, 	in this section we will show that this  much weaker effectivity hypothesis   is sufficient for the implications $\RA$ in $(*)$, including the case of \ML{} randomness. Of course, if $f$ is not defined in a whole neighborhood of  a real~$z$, we lose the usual notion of differentiability at~$z$. Instead, we will  consider pseudo-differentiability at $z$, where one only looks at the slopes at smaller and smaller intervals containing $z$ that have rational endpoints. If $f$ is total and continuous (e.g., if $f$ is computable), then pseudo-differentiability coincides with  usual differentiability, as we will see in Fact~\ref{fact:pseudo_same_continuous}.
Thus, the result for \ML{} randomness  also supplies  our   proof of the implication (i)$\ria$(ii) of Theorem~\ref{thm:MLbV}, which we had postponed to this section. 

The implications $\LA$ in previous proofs of results of type $(*)$ always produce a \emph{computable} function $f$ such that $f'(z)$ fails to exist if the real  $z$ is not random in the appropriate sense. Since computability implies being computable on $I_\Q$, we get full equivalences of type $(*)$ where the effectivity notion is computability on $I_\Q$.

The extensions of our results are interesting because  a number of effectivity notions for functions  have been studied in computable analysis that are intermediate between being   computable and   computable on $I_\Q$. Hence we  also obtain equivalences of type $(*)$ for these effectivity notions.   

  An example of such a notion is Markov computability.    Let  $\phi_e$ denote the $e$-th partial computable function   $\NN \ria \Q$. A  real-valued  function $f$ defined on all computable reals in $[0,1]$ is called  \emph{Markov computable} if  there is a computable function $h\colon \, \NN \to \NN$  such that, if $\phi_e$ is a    Cauchy name of $x$, then $\phi_{h(e)}$   is a   Cauchy name of $f(x)$.  See \cite{Brattka.Hertling.ea:08, Weihrauch:00}, which also discuss   other intermediate effectivity notions for functions  such as the slightly weaker Mazur computability, defined by the condition that   computable sequences of reals are mapped to computable sequences of reals.

Recall from Subsection~\ref{ss:compfunctions} that   a Cauchy name   for a real $x$     is a   sequence $L=(q_n)\sN n$ (i.e., a function $L\colon \, \NN \ria \Q$) such that $|q_n - q_{k}| \le \tp{-n}$ for $k \ge n$. 
Let $\phi_e$ denote the $e$-th computable function   $\NN \ria \Q$. A  real-valued  function $f$ defined on all computable reals in $[0,1]$ is called  \emph{Markov computable} if  there is a computable function $h\colon \, \NN \to \NN$  such that, if $\phi_e$ is a    Cauchy name of $x$, then $\phi_{h(e)}$   is a   Cauchy name of $f(x)$. This  notion has been studied  in the Russian school of constructive analysis, e.g., by Ceitin~\cite{Ceitin:62}, and later      by   Demuth \cite{Demuth:88}, who used the term ``constructive function''. 

Clearly,   Markov computability     implies   computability on $I_\Q$. But Markov computability is much stronger. For instance, each  Markov computable function is continuous on the computable reals. In particular, the  $I_\Q$--computable function  $f$ given above is not Markov computable. 

 To understand this continuity, we discuss an apparently stronger notion of effectivity which is in fact equivalent to Markov computability.   
Recall that in Subsection~\ref{ss:compfunctions} we defined      a  function $f\colon [0,1] \ria \R$  to be  computable if there is a Turing functional $\Phi$ that  maps a  Cauchy name  $L$  of $x\in [0,1]$  to  a Cauchy name    of $f(x)$.
Suppose now      $f(x)$ is at least  defined for  all computable reals $x$ in $[0,1]$. Let us  restrict the  definition above to computable reals:    there is   a Turing functional $\Phi$ such that $\Phi^L$ is   total   for   all \emph{computable} Cauchy names~$L$, and $\Phi$ maps every  computable Cauchy name  for a real $x$ to a Cauchy name for $f(x)$. Note that  such a    function is continuous on the computable reals, because of the use principle: to compute $\Phi^L(n)$, the approximation of $f(x)$ at a  distance of  at most $\tp{-n}$, we   only  use finitely many terms of the Cauchy name $L$ of  $x$.
Clearly,  every   such   function is  Markov computable. The converse implication follows from the Kreisel-Lacombe-Shoenfield/Ceitin theorem; see
     Moschovakis \cite[Thm.\ 4.1]{Moschovakis:10}   for a recent account. 
     
Pour-El and Richards    \cite{Pour-El.Richards:89} gave an example of  a  Markov computable function that is not computable. Bienvenu et al.\ \cite{Bienvenu.Hoelzl.ea:12} provided a Markov computable function $f$ that can be extended to a continuous function on $[0,1]$ and fails the Denjoy alternative  of Subsection~\ref{ss:DAlt}   %%{ss:other applications 4.1}
at some left-c.e.\ \ML{} random real; the existence of such a function had  already been stated by Demuth \cite{Demuth:76}. Note that  $f$ is not computable by Theorem~\ref{thm:DenjoyCR}. 
% 

% \cite[II.4.6]{Odifreddi:89}.

\subsection{Pseudo-differentiability}

Recall the notations $D^V f(x)$ and $D_V(x)$ from Subsection~\ref{ss:diff_prelims}, where  $V \sub \RR$ and the  domain of the  function $f$ contains  $V \cap [0,1]$. 
We will write  $\utilde{D} f(x)$ for $D_\QQ f(x)$, and $\widetilde D f(x) $ for $D^\QQ f(x)$.

\begin{deff} We say that  a function $f$ with domain containing $I_\Q$    is \emph{pseudo-differentiable at} $x$ if  $-\infty < \utilde  D f(x)  =   \widetilde D f(x) < \infty$. \end{deff}

\begin{fact} \label{fact:pseudo_same_continuous} Suppose that $f\colon [0,1] \to \RR$ is continuous. Then \bc $ \utilde  D f(x)  = \ul Df(x)$ and $ \widetilde  D f(x)  = \ol Df(x)$  \ec for each $x$. Thus,   if $f$ is pseudo-differentiable at $x$,  then $f'(x)= \utilde Df(x) = \widetilde  Df(x)$. \end{fact}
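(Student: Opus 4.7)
The plan is to prove the two equalities $\utilde D f(x) = \ul D f(x)$ and $\widetilde D f(x) = \ol D f(x)$; the consequence for pseudo-differentiability is then immediate, since $-\infty < \utilde D f(x) = \widetilde D f(x) < \infty$ forces $\ul D f(x) = \ol D f(x)$ to be a common finite value, which by definition means $f'(x)$ exists. One direction of each equality is already given for free by inequality~(\ref{eqn:DfDV}), so the work is to establish $\widetilde D f(x) \ge \ol D f(x)$ (and the symmetric $\utilde D f(x) \le \ul D f(x)$, which follows by applying the first to $-f$).

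I would prove $\widetilde D f(x) \ge \ol D f(x)$ by contraposition/direct approximation. Let $v = \ol D f(x)$ and fix an arbitrary $w < v$ and $\eta > 0$; it suffices to produce rationals $a \le x \le b$ with $0 < b - a \le \eta$ and $S_f(a,b) > w$. Pick a real $h$ with $0 < |h| \le \eta/2$ such that $S_f(x, x+h) > (v+w)/2$; assume $h > 0$ (the case $h < 0$ is symmetric, choosing $a$ near $x+h$ and $b$ near $x$). Let $A = f(x+h) - f(x)$, so $A/h > (v+w)/2$.

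The heart of the argument is an elementary bound on how slopes change under perturbation of endpoints. By continuity of $f$ at $x$ and at $x+h$, for any prescribed $\delta > 0$ we can choose rationals $a, b$ with $x - \delta < a < x$ and $x + h < b < x + h + \delta$, such that $|f(a) - f(x)|$ and $|f(b) - f(x+h)|$ are as small as we wish. Then $b - a = h + e_2$ with $0 < e_2 < 2\delta$ and $f(b) - f(a) = A + e_1$ with $|e_1|$ as small as we wish, yielding
\[
\left| S_f(a,b) - \tfrac{A}{h} \right| \;=\; \left| \tfrac{h \, e_1 - A \, e_2}{h(h + e_2)} \right| \;\le\; \tfrac{|e_1|}{h} + \tfrac{|A| \cdot 2\delta}{h^2}.
\]
With $h$ already fixed, taking $\delta$ sufficiently small (and $|e_1|$ forced small by continuity) makes the right-hand side smaller than $(v-w)/2$, so $S_f(a,b) > A/h - (v-w)/2 > w$. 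Simultaneously, keeping $\delta < \eta/4$ ensures $b - a < \eta$. If instead $v = +\infty$, the same construction works with $A/h$ replaced by an arbitrarily large target, giving $\widetilde D f(x) = +\infty$.

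The main (minor) obstacle is just bookkeeping: one must choose $\delta$ \emph{after} $h$, since the admissible $\delta$ depends on $h$ through the $|A|/h^2$ term in the error bound — so one cannot reverse the order of quantifiers. The symmetric inequality $\utilde D f(x) \le \ul D f(x)$ follows either by applying what has been shown to $-f$ (using $\utilde D (-f)(x) = -\widetilde D f(x)$ and $\ul D(-f)(x) = -\ol D f(x)$) or by repeating the argument with $\liminf$ in place of $\limsup$. Combined with (\ref{eqn:DfDV}), this yields both equalities, from which the differentiability statement follows.
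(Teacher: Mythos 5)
Your proof is correct and takes essentially the same route as the paper's: one inequality in each pair comes for free from (\ref{eqn:DfDV}), and the other is obtained by approximating an interval with an endpoint at $x$ by rational-endpoint intervals straddling $x$, which is exactly an appeal to continuity of the slope function. The only difference is presentational—the paper simply invokes the continuity of $S_f$ on its domain where you carry out the endpoint-perturbation estimate explicitly.
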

		\begin{proof} Fix $h> 0$.  Since the slope $S_f$ is continuous on its domain,   
		
		\medskip  
			
			\n $\inf  \{S_f(a,b)  \colon a, b \in I_\Q   \lland  \, a\le x \le b \lland\, 0 <  b-a\le h\} $ \smallskip 
			
			\hfill $ \le  \, \inf \{S_f(x, x+l) \colon \, |l| \le h\}$,

		\medskip
			
		\n 	which implies that $\utilde Df(x) \le \ul Df(x)$. The converse inequality is always true by  the remarks at the end of Subsection~\ref{ss:diff_prelims}. In a similar way, one shows that  $\widetilde Df(x) = \ol  Df(x)$. \end{proof}

%%%%%%%%%%%%%%%%%%%%%%%%%%%%%

\subsection{Extension of the results to the   setting of computability on $I_\Q$}
\label{ss:Proof_MLbV}

We will prove        the implications $\RA$ in our three results of type $(*)$ for functions that are merely computable on~$I_\Q$. 
Extending the  definition in Subsection~\ref{ss:CharML}, we  say that a   function $f$ with domain contained in $[0,1]$ is of bounded variation if   $\infty > \sup \sum_{i=1}^n   | f(t_{i+1}) - f(t_i)|$ 
where the sup is taken over all collections $ t_1 < t_2 < \ldots <  t_n$ in the domain of $f$.

\begin{thm} \label{thm:extensionIQcomputable} Let $f$ be computable  on $I_\Q$. 
	
	\bi

\item[(I)] If $f$ is nondecreasing on $I_\Q$, then $f$ is pseudo-differentiable at each  computably random real $z$.

	\item[(II)] If $f\upharpoonright {I_\Q}$ is of bounded variation, then $f$ is pseudo-differentiable at each  Martin-L\"of random real $z$. 
	
	 \item[(III)] If $f$ is  pseudo-differentiable at  almost every  $x\in [0,1]$, then $f$ is pseudo-differentiable at each weakly $2$-random real $z$.

\ei
\end{thm}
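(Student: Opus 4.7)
The plan is to verify each of (I), (II), (III) by adapting the corresponding argument from earlier in the paper, exploiting that $S_f(a,b)$ is a computable real uniformly in $a,b\in I_\Q$ and that pseudo-differentiability of $f$ at $z$ depends only on these rational slopes.

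For (III), I would show that the non-pseudo-differentiability set of $f$ is $\SI 3$, in direct analogy with the $\RA$ direction of Theorem~\ref{thm:w2rChar}. For $p\in\QQ$ let
\[
\utilde C(p)=\{z\in[0,1]\colon \fa t\in\QQ^+\; \ex a,b\in I_\Q\,(a\le z\le b\lland 0<b-a\le t\lland S_f(a,b)<p)\},
\]
and analogously $\widetilde C(q)$. Since $S_f(a,b)<p$ is a $\SI 1$ condition in the rationals $a,b,p$, each $\utilde C(p)$ and $\widetilde C(q)$ is $\PI 2$ uniformly. The non-pseudo-differentiability set equals $\bigcap_p\utilde C(p)\cup\bigcap_q\widetilde C(q)\cup\bigcup_{p<q}(\utilde C(p)\cap\widetilde C(q))$, a $\SI 3$ set. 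Under the hypothesis it is null, and as a countable union of null $\PI 2$ sets it is avoided by every weakly $2$-random real.

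For (I), I would adapt the proof of Theorem~\ref{thm:CRdiff}, whose every step accesses $f$ only at rational points. Given a computably random $z$ and $f$ nondecreasing on $I_\Q$ that fails to be pseudo-differentiable at $z$, split into two cases. If $\widetilde D f(z)=\infty$, invoke the scaling argument of (i)$\to$(iii) of Theorem~\ref{thm:CRdiff}: by Lemma~\ref{lem:rat_intervals} choose $p,q\in L$ with $\sup\{S_f(A)\colon A$ a $(p,q)$-interval containing $z\}=\infty$, so $N(\sss)=S_f(p[\sss]+q)$ is a computable martingale succeeding on the binary expansion of $(z-q)/p$, contradicting computable randomness of $z$ via Theorem~\ref{thm:CRbaseinv}. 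If $\utilde D f(z)<\widetilde D f(z)$ are both finite (with $\utilde Df(z)\ge 0$ by monotonicity), replay the Doob-style tree construction of (iii)$\to$(ii) of Theorem~\ref{thm:CRdiff}, defining a nondecreasing $g$ at the rational endpoints of the tree using only rational values of $f$; continuity via Claim~\ref{cl:limtreepath} and Proposition~\ref{prop:monotonic computable} extend $g$ to a computable nondecreasing function on $[0,1]$ with $\widetilde D g(z)=\infty$, reducing to the first case. One subtlety: Lemma~\ref{lem:mid3} was stated under continuity at $z$, but its algebraic content shows without continuity that agreement of the middle-third pseudo-derivatives forces agreement of the full pseudo-derivatives; the contrapositive supplies the gap needed for the pseudo-derivative analog of Lemma~\ref{lem:ratspq}.

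For (II), the main obstacle, the plan is to construct a Martin-L\"of test whose intersection covers every non-pseudo-differentiable point of $f$. For each $M>0$ the open set $\+W_M=\bigcup\{(a,b)\colon a,b\in I_\Q,\ |S_f(a,b)|>M\}$ is $\SI 1$ uniformly in $M$, and a Vitali extraction of a disjoint subfamily yields $\leb\+W_M\le 3V/M$, where $V$ is the total variation of $f\!\restriction\! I_\Q$. A Doob-style upcrossing estimate analogously bounds the measure of the set of $z$ admitting many nested rational intervals alternating in slope between below $a$ and above $b$. The obstacle is that $V$ is only left-c.e., not computable, so the test parameters cannot be chosen directly from $V$. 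I would handle this by the standard truncation trick: for each guess $k\in\NN$ claiming $V\le k$, enumerate the intended $\SI 1$ covering but halt once a computable lower bound on its Lebesgue measure threatens to exceed $\tp{-m}$; this yields a bona fide Martin-L\"of test $(\+V^k_m)$ that agrees with the intended covering whenever $k\ge V$. Unioning over all guesses $k$, all levels $m$, and all rational oscillation pairs $a<b$, every ML random $z$ avoids the test for the correct guess $k$, giving $\widetilde D f(z)<\infty$, $\utilde D f(z)>-\infty$, and no oscillation, so $f$ is pseudo-differentiable at $z$. The hardest step will be making the Vitali extraction and upcrossing estimate fully effective: extracting almost-disjoint sub-families from an enumerated $\SI 1$ set and computing rigorous lower bounds on their accumulated Lebesgue measure, so that the truncation genuinely controls the test measures.
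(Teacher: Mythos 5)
Parts (I) and (III) of your proposal track the paper's argument closely and appear sound: (III) reproduces the $\PI 2$-set computation from the forward direction of Theorem~\ref{thm:w2rChar}, with $\utilde C(p)$ and $\widetilde C(q)$ in place of $\ul C(p)$ and $\ol C(q)$; and (I) reproduces the two-step scheme (i)$\to$(iii)$\to$(ii) from the proof of Theorem~\ref{thm:CRdiff}, correctly flagging that Lemma~\ref{lem:mid3} must be replaced by a version that uses only rational slope pairs --- this is precisely what the paper's Lemma~\ref{lem:mid3-rationals} supplies.

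Part (II) is where you diverge, and there the paper takes a much shorter route than the one you sketch. It does not re-effectivize the classical Vitali/upcrossing proof of differentiability for BV functions. Instead it applies Jordan's theorem to $f\restriction I_\Q$ to write $f = f_0 - f_1$ on $I_\Q$ with each $f_i$ nondecreasing on $I_\Q$, observes that the set of pairs of names for such decompositions is a nonempty $\PI 1$ class $\mathcal P$, and invokes the low-for-$z$ basis theorem of \cite{Downey.Hirschfeldt.ea:05}: the ML random real $z$ stays ML random --- and hence computably random --- relative to some member of $\mathcal P$. Relativizing (I) to that oracle makes each $f_i$ pseudo-differentiable at $z$, hence so is $f$. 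This reduction to (I) costs essentially nothing, since (I) relativizes. Your alternative (building ML tests from $\SI 1$ covers $\+W_M$ via effective Vitali extraction and an upcrossing estimate, with the left-c.e.\ variation $V$ handled by truncation) is a genuinely different and in principle feasible route, but as written it leaves the hardest steps --- a fully effective Vitali extraction with the $3V/M$ measure bound, an effective upcrossing inequality that controls the oscillation case $\utilde D f(z) < \widetilde D f(z)$, and the interaction of both with truncation --- unproved, and you flag this yourself. Two cautions if you want to carry it out: first, the classical upcrossing estimate for BV functions is normally derived after a Jordan decomposition, so declining to decompose is likely to make the measure bound harder, not easier; second, the truncation parameter must be chosen so that the intended cover stays strictly below the cutoff when $k\ge V$ (e.g.\ $M = 3k\cdot 2^{m+1}$), otherwise the truncation can clip the cover even at the correct guess. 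The basis-theorem shortcut avoids all of this, which is why the paper uses it.
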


\begin{proof} (I) We will show that the analogs of the implications (i)$\to$(iii)$\to$(ii) in Theorem~\ref{thm:CRdiff} are valid when  $\ol Dg(z)$ is replaced by $\widetilde Dg(z)$,  and differentiability by pseudo-differentiability.

 As before, the analog of (i)$\to$(iii) is   proved  by contraposition: if $g$ is nondecreasing and computable on $I_\Q$, and $\widetilde Dg(z) = \infty$,  then $z$ is not computably random. Note that (\ref{eqn:bigsup}) in the proof of (i)$\to$(iii)  is still valid under the hypothesis that $\widetilde Dg(z) = \infty$. The martingale $N$ defined there is computable under the present,  weaker hypothesis that $g$ is computable on $I_\Q$. %Now, as before, we may obtain a computable martingale~$V$ that succeeds on~$z$.

 For the analog of implication (iii)$\to$(ii)  in Theorem~\ref{thm:CRdiff}, we are given a function  $f$ that is nondecreasing and computable on $I_\Q$, and not pseudo-differentiable at $z$. We want to build a function  $g$ that is  nondecreasing and computable on $I_\Q$,   such that $\widetilde Dg(z) = \infty$. 

If $\utilde Df(z)= \infty$ we let $g=f$. Now suppose otherwise. We will show that Lemma~\ref{lem:ratspq}  is still valid for appropriate $\beta < \gamma$. Firstly, we adapt Lemma~\ref{lem:mid3}.  For $h> 0$ we let 
 \bc $\+ K_h = \{\la a,b \ra \colon 0<b-a <  h \lland a + (b-a)/4 <  z  <  b - (b-a)/4 \} $ \ec  
and $\+ K_h^* = \+ K_h \cap \QQ \times \QQ$.
%We say that $z$ is in the \emph{center region} of an   interval $(a,b)$ if $a + (b-a)/4 <  z  <  b - (b-a)/4$.  
\begin{lemma} \label{lem:mid3-rationals}
 Suppose that  \bc $\lim_{h\to 0} \sup \{S_f(u,v)\colon (u,v)\in \+ K_h^*\} =\lim_{h\to 0} \inf \{S_f(u,v)\colon (u,v)\in \+ K_h^*\}$ \ec and is finite. Then $f$ is pseudo-differentiable at $z$.\end{lemma} 
\n To see this,   take $h>0$ and $t<s$ such that $t<S_f(u,v)<s$ for all $(u,v)\in \+ K_h^*$.   We will use  the notation from  the proof of Lemma~\ref{lem:mid3}. In particular,   we consider  an interval $(c,d)$ with rational endpoints containing $z$ such that $d-c < h/3$, and, as before,  define  $N$, $a_i, b_i $ ($0\le i 
\le N+1)$ so that the intervals  $(a_i, b_i)$ and $(a_{i+1}, b_i)$ ($0 \le i \le N$)  contain $z$ in their middle thirds.   

Note that  $\+ K_h $ is open in $\R^2$. Therefore 

\bc $\+ S = \{ \la u_0, v_0, \ldots, u_N, v_N, u_{N+1}  \ra \colon \fa  i\le N \,  [  \la u_i, v_i \ra \in \+K_h \lland \la u_{i+1}, v_i  \ra \in \+ K_h ]\}$ \ec
is an open subset of  $\R^{2N+3}$ containing $\la a_0, b_0, \ldots, b_N, a_{N+1}\ra $. 

\n   If $\Gamma = \la u_0, v_0, \ldots, u_N, v_N, u_{N+1}  \ra$ is in $\+ S \cap \Q^{2N+3}$ then we have inequality (\ref{line1}) in the proof of Lemma~\ref{lem:mid3} with $u_i, v_i$ instead of $a_i, b_i$. Since $\+ S$ is open we can let such  $\Gamma$ tend to $\la a_0, b_0, \ldots, b_N, a_{N+1}\ra $, which  implies the inequality  (\ref{line1}) as stated.  We  may now continue the argument as before in order to show   that   $ S_f(c,d)<5s-4t$. 

 The lower bound $5t-4s < S_f(c,d)$ is proved in a  similar way.  This yields   Lemma~\ref{lem:mid3-rationals}.

%
%
%Note that the center region is defined to be an open interval. Thus, if we  choose a rational   $a_{N+1}' \ge  a_{N+1}$  sufficiently close to $a_{N+1}$ then  $z$ is still in the center region of  $(a_{N+1}', b_N)$. Next, if  we choose a rational $b_N'\le b_N$  sufficiently close to $b_N$ then  $z$ is in the center region of both  $(a_{N+1}', b_N ')$ and $(a_{N}, b_N ' ) $. We continue this till we have defined rationals  $a_1' \ge a_1 $ and $b_0' \le b_0$ so that $z$ is in the center region of $(a_1', b_0')$ and of $(a_0, b_0')$ where $a_0$ is the rational $c$. Note  that all the relevant intervals are   in $\+ K_h$.  Hence  inequality (\ref{line1}) in the proof of Lemma~\ref{lem:mid3} is valid for the rationals $a'_i, b'_i$ in place of the $a_i, b_i$. If we choose these rationals close enough to the  $a_i, b_i$,    this implies the inequality  (\ref{line1}) as stated.  Hence we may  conclude the   verification of  $ S_f(c,d)<5s-4t$ as before.  The lower bound $5t-4s < S_f(c,d)$ is proved in a  similar way.  \end{proof}

  By our hypothesis that $f$ is not pseudo-differentiable at $z$,  the limit in   the lemma does not exist, so  we can choose $\wt \beta, \wt \gamma$  such that 
 \begin{eqnarray*}  
	\wt \gamma  &   <  &   \lim_{h \ria 0} \sup \{ S_f(x,y)\colon \,  0 
 \le y-x \le h \lland   \la x,y \ra \in \+ K_h^*  \},  \\
\wt \beta  &   > &    \lim_{h \ria 0} \inf \,  \{ S_f(x,y )\colon \,  0 
 \le y-x \le h \lland    \la x,y \ra \in \+ K_h^* \}. \end{eqnarray*}
 Choosing $\alpha < 4/3$ as before,   the proof of Lemma~\ref{lem:ratspq}   now goes through. 
 
 Note that the construction in the proof of  (iii)$\to$(ii)  actually yields a \emph{computable} nondecreasing $g$.  For, to define $g$ we only needed to compute the values of $f$ on the dense set $V\sub I_\Q$; we did not require  $f$ to be continuous.   

We have $\ol Dg(z) = \infty$ as in Claim~\ref{claim:Dgzinf}. Since $g$ is continuous, by Fact~\ref{fact:pseudo_same_continuous} this implies  $\wt Dg(z)= \infty$. 

\medskip

  Before we  prove (II)     we need some notation. Each $x \in [0,1]$ has a Cauchy name  $(q_n)\sN n$     such that  $q_0 = 0$,  $q_1=1/2$, and each  $q_n$ is of the form $i\tp{-n}$ for an integer $i$. Thus, if $n> 0$ then   $q_{n} - q_{n-1} = a \tp{-n}$ for some $a \in \Sigma  = \{-1, 0,1\}$.  In this way a real $x \in [0,1]$ can be represented by an element of $\Sigma^\omega$. 
     We use this to  introduce  names for   functions $h \colon \, I_\Q  \ria [0,1]$.   Let $(v_n)\sN n$ list $I_\Q$ effectively without repetitions. Via the  representation    given above,  we can name   $h$  by   some  sequence $X \in \Sigma^\omega$: we let $X(\la v_r, n \ra)$ be  the $n$-th entry in a name for $h(v_r)$.  
 It is not hard to show that  the names of nondecreasing functions $h \colon \, I_\Q  \ria [0,1]$ form a $ \PPI$ class. % By Proposition~\ref{prop:monotonic computable}, a continuous nondecreasing function $h$ has a computable name iff $h$ is computable on the rationals.

Let the variable $q$ range over $I_\Q$.   Jordan's Theorem also holds for functions defined on $I_\Q$: if $g$ has bounded variation on $I_\Q$ then $g \upharpoonright  {I_\Q} = f_0-f_1$ for nondecreasing  functions $f_0,f_1$ defined on $I_\Q$. One simply lets $f_0(q)$ be the variation of $g$ restricted to $ {[0,q] \cap I_\Q}$. Then $f_0$ is nondecreasing. One   checks  as in the usual proof of Jordan's theorem (e.g., \cite[Cor 5.2.3]{Bogachev.vol1:07})   that the function $f_1$ given by $f_1(q) = f_0 (q)-g(q)$ is nondecreasing as well.

 We now prove (II).  We may assume that the variation of $f \upharpoonright {I_\QQ} $ is at most~$1$. Let $\mathcal P$ be the nonempty  class of  pairs  $\la   \eta_0,   \eta_1 \ra$ of names for  nondecreasing functions   $  f_0,    f_1 \colon \, I_\Q \ria [0,1]$  such that $f(q) =   f_0(q) -   f_1(q)$ for each   $q \in I_\Q$.   Since   $f$ is computable on $I_\Q$,  $\mathcal P$ is a $\PPI$ class.

By the ``low for $z$ basis theorem''    \cite[Prop.\ 7.4]{Downey.Hirschfeldt.ea:05},  $z$ is \ML{} random, and hence computably random,  relative to    some member $\la   \eta_0,   \eta_1 \ra$ of $\mathcal P$. 
Thus, by relativizing (I) to both $\eta_0$ and $\eta_1$, we see that  $f_i$ is pseudo-differentiable at $z$ for   $i= 0,1$.  This implies that $f$ is pseudo-differentiable at $z$.

By Fact~\ref{fact:pseudo_same_continuous} this also  provides  the implication (i)$\ria$(ii) of Theorem~\ref{thm:MLbV}.

\medskip

\n   (III). We adapt  the proof of Theorem~\ref{thm:w2rChar} to the new setting.   For any rational $p>0$,  let 
	
	\bc $\utilde C(p) = \{z \colon \,  \fa t >0 \, \ex a,b  [ a \le z \le b \lland 0< b-a \le t \lland   \, S_f(a,b ) < p \}$,  \ec where $t,a,b$ range over rationals.
Since $f$ is computable on $I_\Q$, the set  
\bc $\{z \colon \,  \ex a,b  \,  [ a \le z \le b \lland 0< b-a \le t \lland   \, S_f(a,b ) < p\}$ \ec
  is a $\SI 1$ set  uniformly in $t$.	Then $\utilde C(p)$ is $\PI 2$ uniformly in $p$. Furthermore, 
	$\utilde Df(z) <p \RA z \in \utilde C(p) \RA \utilde Df(z) \le p$. 
	   The sets  $\widetilde C(q)$ are   defined analogously, and  	similar observations hold for them. 

 Now, in order to show that the set of reals $z$ at which $f$ fails to be pseudo-differentiable is a $\SI3$ null set,  we may conclude the argument as before with the notations $\utilde C(p), \widetilde C(q)$ in place of $\ul C(p), \ol C(q)$. 
\end{proof}

  \subsection{Future directions}  \label{ss:future_directions} 
We discuss some current research.

\n \emph{Further algorithmic  randomness notions.}  Miyabe \cite{Miyabe:12} has characterized Kurtz randomness via an effective version of the differentiation theorem. 
  $2$-randomness  has not yet been characterized via  differentiability of effective functions.    Figueira and Nies~\cite{Figueira.Nies:13} and independently Kawamura and Miyabe (2013) have adapted the results on computable randomness in Sections~\ref{s:CRand_intro} and~\ref{s:CRandDiff}  to the subrecursive case, and in particular to  polynomial time randomness. There is an extensive theory of polynomial time computable functions on the unit interval; see for instance near the end of \cite{ Weihrauch:00}. 

%The latter might be related to differentiability of singular functions (i.e., functions that are a.e.\ differentiable with value $0$, such as the Cantor function). 
%Recently  Demuth randomness, already    introduced in \cite{Demuth:88},  has been in the focus of attention  because of  its strong interaction with computability theory. Demuth tests generalize \ML\ tests    in that one can exchange the $m$-th component (a $\SI 1$ set of measure at most $\tp{-m}$ in the appropriate  space)  a computably bounded number of times. To pass a Demuth test means to be out of almost all final versions of the components.   For   background on Demuth randomness see Section 3.6 of \cite{Nies:book}. 
%
%
%Demuth \cite{Demuth:88} proved that if $f$ is Markov computable then  for each Demuth random real  $z$ the Denjoy alternative of Theorem~\ref{thm:Denjoy} holds. 
% He also showed that mere \ML{} randomness  of $z$  is not sufficient. It is unknown whether Demuth's result provides a characterization of Demuth randomness in the sense of ($*$) in the introduction. 

\n \emph{Values of the derivative.}
If $f$ is a (Markov) computable function of bounded variation, then $f'(z) $ exists for all \ML{} random reals $z$ by Theorems~\ref{thm:MLbV} and~\ref{thm:extensionIQcomputable}. A.\ Pauly (2011) has asked what can be said about effectivity properties of the derivative as a function on the \ML{} random reals. Layerwise computability  in the sense of Hoyrup and Rojas~\cite{Hoyrup.Rojas:09} might be relevant here. Demuth has shown in \cite[p.\ 584]{Demuth:75} that if $f$ is Markov computable and $z$ is $\DII$ and satisfies a certain randomness property stronger than \ML's,  then  $f'(z)$ is    $\DII$ uniformly in  an index for~$z$ as a $\DII$ real; also see~\cite[Section~4]{Kucera.Nies:12}.
Similar questions can be asked about   other randomness notions and the corresponding classes of functions.

\n \emph{Extending  the results to higher dimensions.}
Several researchers have considered  extensions  of the results in this paper to higher dimensions. Already   Pathak~\cite{Pathak:09} showed that a  weak form of the Lebesgue   differentiation theorem  holds for \ML{} random points in the $n$-cube $[0,1]^n$. The above-mentioned    work  of Rute~\cite{Rute:12}, and Pathak, Simpson, and Rojas  strengthens this to Schnorr random points in the $n$-cube. On the other hand, functions of bounded variation can be defined in higher dimension \cite[p.\ 378]{Bogachev.vol1:07}, and    one might try to characterize \ML{} randomness in higher dimensions via their differentiability.
For weak $2$-randomness, recent   work of Galicki, Nies and Turetsky yields  the analog of Theorem~\ref{thm:w2rChar} in higher  dimensions. 

Rademacher's Theorem implies that a   Lipschitz function  on $[0,1]^n$ is almost everywhere differentiable.   Recall from the discussion in Section~\ref{ss:Pathak}  that computable randomness can be characterized via differentiability of computable Lipschitz functions defined on $[0,1]$.  Call a point $x =(x_1, \ldots, x_n)$ in  $[0,1]^n$  computably random if  no computable martingale succeeds on the   binary expansions of  $x_1, \ldots, x_n$ joined  in the canonical way (alternating between the sequences).   An obvious question is  whether  also higher dimensions, computable  randomness   is   equivalent to differentiability at~$x$ of all computable Lipschitz functions.   Galicki, Nies and Turetsky have announced an affirmative answer for one implication, that randomness implies differentiability. The converse implication remains open.

  {\bf Acknowledgment.} We would like to thank Santiago Figueira, Jason Rute and Stijn Vermeeren for the careful reading of the paper, and Anton\'in \Kuc\ for making Demuth's work accessible to us.

% 
%\bibliographystyle{plain}
%\bibliography{../../Logicsharing/bibs/Nies,../../Logicsharing/bibs/randomness,../../Logicsharing/bibs/various,../../Logicsharing/bibs/recursiontheory,../../Logicsharing/bibs/Kucera,../../Logicsharing/bibs/analysis,../../Logicsharing/bibs/reverse_maths}

\begin{thebibliography}{10}

\bibitem{Bienvenu.Hoelzl.ea:12}
L.~Bienvenu, R.~Hoelzl, J.~Miller, and A.~Nies.
\newblock The {D}enjoy alternative for computable functions.
\newblock In {\em STACS}, pages 543 -- 554, 2012.

\bibitem{Bogachev.vol1:07}
V.~I. Bogachev.
\newblock {\em Measure theory. {V}ol. {I}, {II}}.
\newblock Springer-Verlag, Berlin, 2007.

\bibitem{Brattka.Hertling.ea:08}
V.~Brattka, P.~Hertling, and K.~Weihrauch.
\newblock A tutorial on computable analysis.
\newblock In S.~Barry Cooper, Benedikt L\"owe, and Andrea Sorbi, editors, {\em
  New Computational Paradigms: Changing Conceptions of What is Computable},
  pages 425--491. Springer, New York, 2008.

\bibitem{Bruckner:78}
A.~M. Bruckner.
\newblock {\em Differentiation of real functions}, volume 659 of {\em Lecture
  Notes in Mathematics}.
\newblock Springer, Berlin, 1978.

\bibitem{Carothers:00}
N.L. Carothers.
\newblock {\em {Real analysis}}.
\newblock Cambridge University Press, 2000.

\bibitem{Day:09}
A.~Day.
\newblock Process and truth-table characterizations of randomness.
\newblock Unpublished, 20xx.

\bibitem{Demuth:75}
O.~Demuth.
\newblock The differentiability of constructive functions of weakly bounded
  variation on pseudo numbers.
\newblock {\em Comment. Math. Univ. Carolin.}, 16(3):583--599, 1975.
\newblock Russian.

\bibitem{Downey.Griffiths.ea:04}
R.~Downey, E.~Griffiths, and G.~Laforte.
\newblock On {S}chnorr and computable randomness, martingales, and machines.
\newblock {\em MLQ Math. Log. Q.}, 50(6):613--627, 2004.

\bibitem{Downey.Hirschfeldt:book}
R.~Downey and D.~Hirschfeldt.
\newblock {\em Algorithmic randomness and complexity}.
\newblock Springer-Verlag, Berlin, 2010.
\newblock 855 pages.

\bibitem{Downey.Hirschfeldt.ea:05}
R.~Downey, D.~Hirschfeldt, J.~Miller, and A.~Nies.
\newblock Relativizing {C}haitin's halting probability.
\newblock {\em J. Math. Log.}, 5(2):167--192, 2005.

\bibitem{LogicBlog:13}
A.~Nies (editor).
\newblock Logic {B}log.
\newblock Available at \url{http://dl.dropbox.com/u/370127/Blog/Blog2013.pdf},
  2013.

\bibitem{Figueira.Nies:13}
S.~Figueira and A.~Nies.
\newblock Feasible analysis, randomness, and base invariance.
\newblock To appear in {T}heory of {C}omputing {S}ystems.

\bibitem{Fowler.Preiss:09}
T.~Fowler and D.~Preiss.
\newblock A simple proof of {Z}ahorski's description of non-differentiability
  sets of {L}ipschitz functions.
\newblock {\em Real Anal. Exchange}, 34(1):127--138, 2009.

\bibitem{Freer.Kjos.ea:nd}
C.~Freer, B.~Kjos-Hanssen, A.~Nies, and F.~Stephan.
\newblock Effective aspects of {L}ipschitz functions.
\newblock {S}ubmitted.

\bibitem{Hoyrup.Rojas:09}
M.~Hoyrup and C.~Rojas.
\newblock Computability of probability measures and {M}artin-{L}\"of randomness
  over metric spaces.
\newblock {\em Inform. and Comput.}, 207(7):830--847, 2009.

\bibitem{Kucera.Nies:12}
Anton\'{\i}n Ku\v{c}era and Andr{\'e} Nies.
\newblock Demuth's path to randomness.
\newblock In {\em Proceedings of the 2012 international conference on
  Theoretical Computer Science: computation, physics and beyond}, WTCS'12,
  pages 159--173, Berlin, Heidelberg, 2012. Springer-Verlag.

\bibitem{Lebesgue:1909}
H.~Lebesgue.
\newblock Sur les int\'egrales singuli\`eres.
\newblock {\em Ann. Fac. Sci. Toulouse Sci. Math. Sci. Phys. (3)}, 1:25--117,
  1909.

\bibitem{Miyabe:12}
K.~Miyabe.
\newblock Characterization of {K}urtz randomness by a differentiation theorem.
\newblock {S}ubmitted, 2011.

\bibitem{Montalban:11}
Antonio Montalb{\'a}n.
\newblock Open questions in reverse mathematics.
\newblock {\em Bull. Symbolic Logic}, 17(3):431--454, 2011.

\bibitem{Nies:book}
A.~Nies.
\newblock {\em Computability and randomness}, volume~51 of {\em Oxford Logic
  Guides}.
\newblock Oxford University Press, Oxford, 2009.

\bibitem{Nies:ICM}
A.\ Nies.
\newblock Interactions of computability and randomness.
\newblock In {\em Proceedings of the International Congress of Mathematicians},
  pages 30--57. World Scientific, 2010.

\bibitem{Pathak:09}
N.~Pathak.
\newblock A computational aspect of the {L}ebesgue differentiation theorem.
\newblock {\em J. Log. Anal.}, 1:Paper 9, 15, 2009.

\bibitem{Pour-El.Richards:89}
M.~Pour-El and J.~Richards.
\newblock {\em Computability in analysis and physics}.
\newblock Perspectives in Mathematical Logic. Springer-Verlag, Berlin, 1989.

\bibitem{Rudin:74}
W.~Rudin.
\newblock {\em Real and complex analysis}.
\newblock McGraw-Hill Book Co., New York, second edition, 1974.

\bibitem{Rudin:87}
W.~Rudin.
\newblock {\em Real and complex analysis}.
\newblock McGraw-Hill Book Co., New York, third edition, 1987.

\bibitem{Rute:12}
J.~Rute.
\newblock Algorithmic randomness, martingales, and differentiability {I}.
\newblock {I}n preparation, 2012.

\bibitem{Schnorr:75}
C.P. Schnorr.
\newblock {\em Zuf\"alligkeit und {W}ahrscheinlichkeit. {E}ine algorithmische
  {B}egr\"undung der {W}ahrscheinlichkeitstheorie}.
\newblock Springer-Verlag, Berlin, 1971.
\newblock Lecture Notes in Mathematics, Vol. 218.

\bibitem{Weihrauch:00}
K.~Weihrauch.
\newblock {\em Computable Analysis}.
\newblock Springer, Berlin, 2000.

\bibitem{Zahorski:46}
Z.~Zahorski.
\newblock Sur l'ensemble des points de non-d\'erivabilit\'e d'une fonction
  continue.
\newblock {\em Bull. Soc. Math. France}, 74:147--178, 1946.

\end{thebibliography}

%\def\cprime{$'$}

\end{document}